\numberwithin{figure}{section}
\numberwithin{table}{section}
\date{} 
\def\0{{\mathbf{0}}}
\def\e{{\mathbf{e}}}
\def\u{{\mathbf{u}}}
\def\n{{\mathbf{n}}}
\def\vv{{\mathbf{v}}}
\numberwithin{equation}{section}
\newtheorem{theorem}{Theorem}[section]
\newtheorem{lemma}{Lemma}[section]
\newtheorem{re}{Remark}[section]
\title{A high-order, high-efficiency adaptive time filter algorithm for shale reservoir model based on coupled fluid flow with porous media flow
\thanks{Supported in part by NSF of China (No. 11771259 and No. 12001347), Shaanxi Provincial Joint Laboratory of Artificial Intelligence (No. 2022JC-SYS-05), Innovative team project of Shaanxi Provincial Department of Education (No. 21JP013 and No. 21JP019), Shaanxi Province Natural Science basic research program key project (No. 2023-JC-ZD-02), National High-end Foreign Experts Recruitment Plan (No. G2023041032L), Energy Mathematics and Data Fusion Key Laboratory of Higher Education in Shaanxi Province, Shaanxi Provincial Demonstration Base for the Introduction of Foreign Intelligence: Mathematics and data science cross-integration innovation and intelligence introduction base.}
}
\author{Jian Li\thanks{School of Mathematics and Data Science, Shaanxi University of science and technology, Xi'an, Shaanxi 710021, China.({\tt jianli@sust.edu.cn})}
, Lele Chen\thanks{School of Mathematics and Data Science, Shaanxi University of science and technology, Xi'an, Shaanxi 710021, China.({\tt 221711036@sust.edu.cn})}
, Yi Qin\thanks{School of Mathematics and Data Science, Shaanxi University of science and technology, Xi'an, Shaanxi 710021, China.({\tt yiqin@sust.edu.cn})}
\;\;and Zhangxin Chen\thanks{Department of Chemical and Petroleum Engineering, Schulich School of Engineering, University of Calgary, Calgary T2N 1N4, Canada.({\tt zhachen@ucalgary.ca})}
}
\begin{document}
\maketitle
\begin{abstract}
In this paper, a third-order time adaptive algorithm with less computation, low complexity is provided for shale reservoir model based on coupled fluid flow with porous media flow. The algorithm combines the three-step linear time filters method for simple post-processing and the second-order backward differential formula (BDF2), is third-order accurate and provides, at no extra computational complexity. At the same time, the time filter method can also be used to damp non-physical oscillations inherent in the BDF2 method, ensuring stability. We proves the variable time stepsize second-order backward differential formula plus time filter (BDF2-TF) algorithm's stability and the convergence properties of the fluid velocity $\u$ and hydraulic head $\phi$ in the $L^2$ norm with an order of $O(k_{n+1}^3 + h^3)$. In the experiments, the adaptive algorithm automatically adjusts the time step in response to the varying characteristics of different models, ensuring that errors are maintained within acceptable limits. This algorithm addresses the issue that high-order algorithms may select inappropriate time steps, resulting in instability or reduced precision of the numerical solution, thereby enhancing calculation accuracy and efficiency. We perform three-dimensional numerical experiments to verify the BDF2-TF algorithm's effectiveness, stability, and third-order convergence. Simultaneously, a simplified model is employed to simulate the process of shale oil extraction from reservoirs, further demonstrating the algorithm's practical applicability.
\end{abstract}

{\bf Keywords:} Time filters, adaptive algorithm, third-order, shale reservoir

{\bf AMS Subject Classification:} 76D05, 76S05, 76D03, 35G05

\section{Introduction}

Shale oil, an unconventional hydrocarbon resource, is extracted from shale rock formations \cite{1,32,2}. The low permeability of these formations presents significant challenges for conventional extraction methods. To address these challenges, hydraulic fracturing and horizontal well fracturing are extensively employed to increase the permeability and production. Optimizing shale oil production requires an accurate mathematical model to simulate fluid flow within the shale matrix. This flow involves fluid flow through wellbores and fractures during hydraulic fracturing, as well as porous media flow characterizing hydrocarbon migration and diffusion through natural fractures and matrix pores. To precisely simulate these processes, it is essential to develop coupled models combining both fluid flow mechanisms and porous media flow phenomena. The Stokes-Darcy model \cite{5,33,3,6} provides a fundamental framework for describing such systems. The free fluid flow region is governed by the evolutionary Stokes equation \cite{28,29,27,30}, while the porous media region is described by the Darcy equation. To couple the porous medium with conduit region, three physically valid coupling conditions, are utilized to capture the interfacial phenomena efficiently and provide a comprehensive description of the fluid dynamics within shale reservoirs. In this paper, we investigate the coupled Stokes-Darcy model within a bounded region $\Omega=\Omega_{p}\bigcup\Omega_{f}\subset\mathbb{R}^d$, where $d=2,3$. Here, $\Omega_p$ represents the porous media region, while $\Omega_f$ denotes the fluid region. The dynamics of free flow in $\Omega_f$ are governed by the unsteady Stokes equations: find the fluid velocity $\u$ and kinematic pressure $p$ that satisfy
\begin{align}\label{Stokes}
	&\frac{\partial\u}{\partial t}-\nu\Delta\u+\nabla p =\mathbf{f}_1, \quad\nabla\cdot\u =0,\quad \text{in}\;\Omega_f\times [0,T],
\end{align}
where $\nu>0$ denotes the kinetic viscosity, and $\mathbf{f}_1$ signifies the external force. The flow of porous media in $\Omega_p$ is described by the primary Darcy equations: find the hydraulic head $\phi$ such that
\begin{align}\label{Darcylaw}
	&S\frac{\partial\phi}{\partial t}-\nabla\cdot(\textbf{K}\nabla\phi)=f_2,\quad \text{in}\;\Omega_f\times [0,T].
\end{align}
Here, $S$ represents the specific mass storativity coefficient, $\textbf{K}$ denotes the hydraulic conductivity tensor, and $f_2$ is a source term.

On the interface $\Gamma=\partial\Omega_f\cap\partial\Omega_p$, the three following interface conditions are enforced \cite{14,12,33}:
\begin{equation}\label{BJS}
	\left\{\begin{array}{c}
		\u\cdot\n_f-(\textbf{K}\nabla\phi)\cdot\n_p=0,\\
		p-\nu\n_f\cdot\frac{\partial\u}{\partial\n_f}=g\phi,\\
		-\nu\tau_j\cdot\frac{\partial\u}{\partial\n_f}=\frac{\alpha\nu\sqrt{d}}{\sqrt{trace(\textbf{K})}}\u\cdot\tau_j.
	\end{array}\right.
\end{equation}
The unit outer normal vectors to the porous media and fluid regions at the interface $\Gamma$ are denoted by $\n_f$ and $\n_p$, respectively. The mutually orthogonal unit tangential vectors to the interface $\Gamma$ are represented by $\tau_j (j=1,\ldots,d-1)$. A positive parameter dependent on medium properties is denoted by $\alpha$, which must be experimentally determined. $g$ represents gravitational acceleration.

Numerous numerical algorithms have been developed for both the stationary and non-stationary Stokes-Darcy model \cite{7,8,10,9,15,11,16}. Regarding time discretization, extensive research has focused on algorithms with constant time steps of either first or second order. For instance, Mu and Zhu \cite{3} consucted the first-order decoupled algorithm using the Backward Euler method. Qin et al. \cite{17} studied the second-order coupled algorithm of backward Euler method plus time filter. Li and Hou proposed a second-order backward differential formula (BDF2) for evoluting the Stokes-Darcy system \cite{18}. However, constant time step algorithm often require the selection of smaller time steps to achieve optimal convergence order and minimize error. As a result, many researchers have investigated variable time step algorithms. Qin \cite{19} initially proposed a second-order time filter algorithm with variable time steps for closed-loop geothermal systems. A second-order variable time step BDF2 numerical scheme for the Cahn-Hilliard equation was provided and analyzed in \cite{21}. An adaptive algorithm, a special form of variable time step algorithm, automatically adjusts the time step based on model characteristics during calculations. This algorithm ensures error control within acceptable limits, minimizes step sizes, and improves computational efficiency \cite{23,6,22}. Presently, low-order algorithms have been thoroughly studied, research of higher order algorithms remains limited. High-order algorithms can significantly enhance the precision and efficiency of numerical simulation, thereby giving the analysis and prediction of complex systems more dependable and precise. The backward differential formula (BDF) have been extensively utilized in numerical methods for time-dependent partial differential equations (PDEs) \cite{25,26,24}. These formulas offer higher order accuracy without significantly increasing computational complexity. However, the limitations of high-order algorithms include numerical instability in the presence of rapid changes or oscillations, as well as the potential accumulation of errors leading to deviation from the true solution during long-term simulations. The time filter method can effectively solve phenomenons and improve temporal accuracy and computational efficiency with minimal additional implementation effort.

By integrating the strengths of high-order algorithms, time filter method, and adaptive algorithm, we have developed and analyzed a third-order adaptive second-order backward differential formula plus time filter (BDF2-TF) algorithm for coupled incompressible flow with porous media flow. The key advantages of our algorithm are as follows:

$\bullet$ The selection of time steps is crucial for high-order algorithms, as inappropriate choices can result in instability or reduced precision of numerical solutions. In contrast, time-adaptive algorithms automatically adjust the step size based on the dynamic characteristics of solutions, thereby enhancing calculation accuracy and efficiency while ensuring stability.

$\bullet$ Although the high-order algorithm boasts high theoretical accuracy, it exhibits certain deficiencies in terms of numerical stability, error accumulation, and oscillation phenomena. The time filter method effectively addresses these issues, and enhances time precision and computational efficiency without increasing the additional complexity. Consequently, high-order algorithms demonstrate enhanced robustness and efficiency in practical applications.

$\bullet$ In theory, stability is ensured when the time step ratio satisfies $0<\tau_n\leq 1.0315$, and a convergence property of fluid velocity $\u$ and hydraulic head $\phi$ in the $L^2$-norm with an order of $O(k_{n+1}^3+h^3)$ has been deduced.

$\bullet$ In the experiment, we use 3D examples for the first time to study the effectiveness, stability and convergence order of the third-order BDF2-TF algorithm, obtaining results consistent with the theoretical analysis. Additionally, a simplified model is employed to simulate the process of extracting shale oil from reservoirs.

The paper is organized as follows: a review of the weak form of the Stokes-Darcy model is presented in Section \ref{sec1:model}. Section \ref{sec2:algorithm} proposes the second-order backward differential formula plus time filter (BDF2-TF) and an adaptive algorithm for the Stokes-Darcy model. Section \ref{sec3:anal} proves the the stability and error estimation of the variable time step BDF2-TF algorithm. Section \ref{sec5:experiments} presents numerical experiments to demonstrate the effectiveness, stability, and convergence of proposed algorithm.

\section{The weak formulation}
\label{sec1:model}

To obtain the weak formulation for the model problems (\ref{Stokes}) - (\ref{BJS}), we introduce function spaces over the domain $D$ as follows:
\begin{align*}
	&\mathbf{H}_{f}=\left\{\vv \in \left(\mathbf{H}^{1}\left(\Omega_{f}\right)^{d}\right): \vv=0 \mbox { on } \partial \Omega_{f}\backslash\Gamma\right\}, \\
	&H_{p}=\left\{\varphi \in H^{1}\left(\Omega_{p}\right)^{d}: \varphi=0 \mbox { on } \partial \Omega_{p}\backslash\Gamma\right\}, \\
	&Q=L_0^{2}\left(\Omega_{f}\right),\qquad \mathbf{W}=\mathbf{H}_f\times H_p.
\end{align*}
For the domain $D$, $(\cdot,\cdot)_{D}$ refers to the scalar inner product in $D$ for $D=\Omega_f$ or $\Omega_p$.  In particular, we denote the $H^1(\Omega_{f/p})$ norm by $\|\cdot\|_{\mathbf{H}_f/H_p}$, the $L^2(\Gamma)$ norm by $\|\cdot\|_{\Gamma}$ and the $L^2(\Omega_{f/p})$ norm by $\|\cdot\|_{f/p}$, and define the corresponding norms and the notation hereafter:
\begin{align}
	&\|\u\|_{\mathbf{H}_f}=\|\nabla\u\|_{L^2\left(\Omega_f\right)},\qquad \|\u\|_f=\|\u\|_{L^2\left(\Omega_f\right)},\\
	&\|\phi\|_{H_p}=\|\nabla\phi\|_{L^2\left(\Omega_p\right)}, \qquad \|\phi\|_p=\|\phi\|_{L^2\left(\Omega_p\right)}.
\end{align}

The weak formulation of the coupled Stokes-Darcy problem is given as follows: find $w=\left(\u,\phi\right)\in \mathbf{W}$ and $p\in Q$ such that $\forall\;t\in (0,T]$,
\begin{align}\label{wf}
	&(w_t,z)+a(w,z)+b(z,p)=(f,z),\quad \forall\;z=(\vv,\varphi)\in\mathbf{W},\\
	&b(w,q)=0, \quad \forall\;q\in Q,
\end{align}
where
\begin{align*}
	(w_t,z)&=(\frac{\partial\u}{\partial t},\vv)_{\Omega_f}+gS(\frac{\partial\phi}{\partial t},\varphi)_{\Omega_p},\\
	a(w,z)&=a_f(\u,\vv)+a_p(\phi,\varphi)+c_{\Gamma}(w,z),\\
	a_f(\u,\vv)&=\nu(\nabla\u,\nabla\vv)_{\Omega_f}+\sum\limits_{j=1}^{d-1}\int_{\Gamma}\frac{\alpha\sqrt{\nu g}}{\sqrt{trace(\textbf{K})}}(\u\cdot\tau_j)(\vv\cdot\tau_j),\\
	a_p(\phi,\varphi)&=g(\textbf{K}\nabla\phi,\nabla\varphi)_{\Omega_p},\\
	c_{\Gamma}(w,z)&=c_{\Gamma}\left(\u,\phi;\vv,\varphi\right)= g\int_{\Gamma}\left(\phi\vv\cdot\n_f-\varphi\u\cdot\n_f\right),\\
	b(z,p)&=-(p,\nabla\cdot\vv)_{\Omega_f},\\
	(f,z)&=(\mathbf{f}_1,\vv)_{\Omega_f}+g(f_2,\varphi)_{\Omega_p}.
\end{align*}

We recall the Poincar$\acute{e}$, trace, and Sobolev inequalities that are useful in the following analysis. There exist constants $C_p$, $C_t$, and $C_s$, which depend only on the domain $\Omega_f$, and $\tilde{C}_p$, $\tilde{C}_t$, and $\tilde{C}_s$, which depend
only on the domain $\Omega_p$,  such that, for all $\vv\in \mathbf{H}_f$ and $\varphi\in H_p$,
\begin{align}\label{in1}
	&\left\|\vv\right\|_f\leq C_p\left\|\vv\right\|_{\mathbf{H}_f},\qquad\qquad\quad\quad\;\; \left\|\varphi\right\|_f\leq \tilde{C}_p\left\|\varphi\right\|_{H_p},\\
	&\left\|\vv\right\|_{\Gamma}\leq C_t\left\|\vv\right\|_{f}^{\frac{1}{2}}\left\|\vv\right\|_{\mathbf{H}_f}^{\frac{1}{2}},\qquad \qquad\; \left\|\varphi\right\|_{\Gamma}\leq \widetilde{C}_t\left\|\varphi\right\|_{p}^{\frac{1}{2}}\left\|\varphi\right\|_{H_p}^{\frac{1}{2}},\\
	&\left\|\vv\right\|_{\Gamma}\leq C_s\left\|\vv\right\|_{f}^{\frac{1}{2}}\left\|\nabla\vv\right\|_{f}^{\frac{1}{2}},\qquad\qquad \left\|\varphi\right\|_{\Gamma}\leq \tilde{C}_s\left\|\varphi\right\|_{p}^{\frac{1}{2}}\left\|\nabla\varphi\right\|_{p}^{\frac{1}{2}}.
\end{align}

$\textbf{K}$ is uniformly bounded and positive defined in $\Omega_p$: there exist $k_{min},k_{max}>0$ such that
\begin{equation}\label{k}
	k_{\min}|x|^{2}\leq \textbf{K} x\cdot x\leq k_{\max}|x|^{2}, \qquad  x\in\Omega_p.
\end{equation}
From (\ref{k}), we have
\begin{equation}\label{kmin}
	\frac{1}{\sqrt{k_{\max}}}\left\|\textbf{K}^{\frac{1}{2}}\nabla\phi\right\|_{L^2(\Omega_p)} \leq\left\|\phi\right\|_{H_p}\leq\frac{1}{\sqrt{k_{\min}}}\left\|\textbf{K}^{\frac{1}{2}}\nabla\phi\right\|_{L^2(\Omega_p)}, \quad \forall\;\phi\in H_p.
\end{equation}

\section{The time filter numerical algorithm}
\label{sec2:algorithm}

To discretize the Stokes-Darcy problem in space by finite element method, let $h_i$ be a positive parameter and ${\cal T}_{h_i}$ be quasi-uniform partition of triangular or quadrilateral elements of $\Omega_i$, $i=f,p$. We assume ${\cal T}_{h_f}$ and ${\cal T}_{h_p}$ match at the interface and the interface is straight in the analysis. For the fluid velocity $\u$ and hydraulic head $\phi$, we utilize the continuous piecewise cubic finite element (P3) spaces $\mathbf{H}_{fh}\subset \mathbf{H}_f$ and $H_{ph}\subset H_p$, while continuous piecewise quadratic functions (P2) in space $Q_{h}\subset Q$ represents the pressure $p$. It should be noted that the space pair $(\mathbf{H}_{fh},Q_h)$ satisfies the discrete inf-sup condition: there exist constants $\beta$ independent of mesh size $h$, such that
$\inf\limits_{q_h\in Q_h}\sup\limits_{\vv_h\in \mathbf{H}_{fh}}\frac{(q_h,\nabla\cdot\vv_h)_{\Omega_f}}{\left\|q_h\right\|_{Q}\left\|\vv_h\right\|_{\mathbf{H}_f}}\geq\beta,$ and define $\mathbf{W}_h=\mathbf{H}_{fh}\times H_{ph}.$

Furthermore, we will use the inverse inequalities: there exist constants $C_I$ and $\tilde{C}_I$, which are dependent on the angles in the finite element mesh, such that, for all $\vv\in \mathbf{H}_{fh}$ and $\phi\in H_{ph}$,
\begin{align}\label{inv}
	&\|\vv_h\|_{\mathbf{H}_f}\leq C_Ih^{-1}\|\vv_h\|_f,\qquad \|\phi_h\|_{H_p}\leq \tilde{C}_Ih^{-1}\|\phi\|_p.
\end{align}

Let $P=\{t_n\}_{n=0}^N$ represent the time partition on the interval $[0,T]$, where $t_0=0$, $t_N=T$, and $k_{n+1}=t_{n+1}-t_n$ denotes the time step size. The ratio of time steps is denoted as $\tau_n=\frac{k_{n+1}}{k_{n}}$. Here, $\u_h^{n+1}$, $p_h^{n+1}$, and $\phi_h^{n+1}$ denotes the discrete approximation by the scheme to $\u_h(t_{n+1})$, $p_h(t_{n+1})$, and $\phi_h(t_{n+1})$. The details are presented below for the variable time step second-order backward differentiation formula plus time filter (BDF2-TF) algorithm and the adaptive BDF2-TF algorithm.
~\\
~\\
\textbf{Algorithm 1: A second-order backward differential formula plus time filter algorithm (BDF2-TF algorithm)}\label{BDF2-TF}

$\bullet$ Given $(\u_{h}^{0}, p_{h}^{0}, \phi_{h}^0)$, $(\u_{h}^{1}, p_{h}^{1}, \phi_{h}^1)$, and $(\u_{h}^{2}, p_{h}^{2}, \phi_{h}^2)$, find $(\hat{\u}_{h}^{n+1}, \hat{p}_{h}^{n+1},\hat{\phi}_{h}^{n+1})\in (\mathbf{H}_{fh},Q_{h},H_{ph})$, with $n=2,\ldots,N-1$, such that $\forall\;(\vv_{h},q_{h}, \varphi_h)\in (\mathbf{H}_{fh},Q_{h},H_{ph})$,
\begin{align}\label{s1}
	&\frac{1}{k_{n+1}}\left(\frac{1+2\tau_n}{1+\tau_n}\hat{\u}_{h}^{n+1}-\left(1+\tau_n\right)\u_h^n +\frac{\tau_{n}^2}{1+\tau_n}\u_{h}^{n-1},\vv_h\right)+a_f\left(\hat{\u}_{h}^{n+1},\vv_h\right) +b\left(\vv_h,\hat{p}_{h}^{n+1}\right)\nonumber\\
	=&\left(\mathbf{f}_1^{n+1},\vv_h\right)-g\int_{\Gamma}\phi_{h,\sigma}^n\vv_h\cdot\n_f,\\
	&b(\hat{\u}_h^{n+1},q_h)=0, \label{s11}\\
	&\frac{gS}{k_{n+1}}\left(\frac{1+2\tau_n}{1+\tau_n}\hat{\phi}_{h}^{n+1}-\left(1+\tau_n\right)\phi_h^n +\frac{\tau_{n}^2}{1+\tau_n}\phi_{h}^{n-1},\varphi_h\right)+a_p\left(\hat{\phi}_{h}^{n+1},\varphi_h\right) \nonumber\\
	=&g\left(f_2^{n+1},\varphi_h\right)+g\int_{\Gamma}\varphi_h\u_{h,\sigma}^n\cdot\n_f.\label{s2}
\end{align}
Where $w_{h,\sigma}^n$ $(w=\u\;\text{or}\;\phi)$ is a third-order approximate time format (below $\delta w_h^{n}:=w_h^n-w_h^{n-1}$)
\begin{align}\label{jjmxs}
	w_{h,\sigma}^n=&\frac{\left(1+\tau_n\right)\left(1+\tau_{n-1}\left(1+\tau_n\right)\right)}{1+\tau_{n-1}} w_{h}^n-\tau_n\left(1+\tau_{n-1}\left(1+\tau_n\right)\right)w_{h}^{n-1} +\frac{\tau_n\tau_{n-1}^2\left(1+\tau_n\right)}{1+\tau_{n-1}}w_{h}^{n-2}\nonumber\\
	=&w_{h}^n+\frac{\tau_n\left(1+\tau_{n-1}\left(2+\tau_n\right)\right)} {1+\tau_{n-1}}\left(w_{h}^n-w_{h}^{n-1}\right) -\frac{\tau_n\tau_{n-1}^2\left(1+\tau_n\right)}{1+\tau_{n-1}}\left(w_{h}^{n-1}-w_{h}^{n-2}\right)\nonumber\\
	=&\sigma_3w_{h}^n+\sigma_2\delta w_{h}^n-\sigma_1\delta w_{h}^{n-1}.
\end{align}

$\bullet$ Apply time filter to update the previous solutions
\begin{align}\label{s3}
	&\Phi_{h}^{n+1}=\hat{\Phi}_h^{n+1}-\eta^3\varrho^3\hat{\Phi}_h^{n+1}, \qquad \Phi=\u,\;p,\;\text{or}\;\phi.
\end{align}

By employing algebraic manipulation, the aforementioned equality can be expressed in relation to the stepsize ratio $\tau$ as follows:
\begin{align}\label{s4}
	&\Phi_{h}^{n+1}=\hat{\Phi}_h^{n+1}+\alpha\left\{\frac{6}{\left(\tau_{n}+1\right)\left(1+\tau_{n-1}\left(\tau_n+1\right)\right)}\hat{\Phi}_h^{n+1} -\frac{6}{1+\tau_{n-1}}\Phi_h^{n}\right.\nonumber\\
	&\qquad\qquad\qquad\qquad\left.+\frac{6\tau_{n}}{1+\tau_n}\Phi_h^{n-1}-\frac{6\tau_{n-1}^2\tau_n}{\left(1+\tau_{n-1}\right)\left(1+\tau_{n-1}\left(\tau_n+1\right)\right)}\Phi_h^{n-2}\right\},
\end{align}
where $\alpha=-\frac{1}{6}\frac{\tau_n\tau_{n-1}\left(1+\tau_n\right)^2\left(1+\tau_{n-1}\left(1+\tau_n\right)\right)}{\left(1+2\tau_n\right)\left(1+\tau_{n-1}\left(1+\tau_n\right)\right)+\tau_n\tau_{n-1}\left(1+\tau_n\right)}$. It follows that $\hat{\Phi}_h^{n+1}$ can be expressed as an equivalent form
\begin{align}\label{hat}
	\hat{\Phi}_{h}^{n+1}=&\left(1+\frac{\tau_{n}\tau_{n-1}\left(1+\tau_n\right)}{\left(1+2\tau_n\right)\left(1+\tau_{n-1}\left(1+\tau_n\right)\right)}\right)\Phi_h^{n+1} -\frac{\tau_n\tau_{n-1}\left(1+\tau_n\right)^2}{\left(1+2\tau_n\right)\left(1+\tau_{n-1}\right)}\Phi_h^n\nonumber\\
	&+\frac{\tau_n^2\tau_{n-1}\left(1+\tau_n\right)}{1+2\tau_n}\Phi_h^{n-1} -\frac{\tau_n^2\tau_{n-1}^3\left(1+\tau_n\right)^2}{\left(1+2\tau_n\right)\left(1+\tau_{n-1}\right)\left(1+\tau_{n-1}\left(1+\tau_n\right)\right)}\Phi_h^{n-2}\nonumber\\
	=&\Phi_h^{n+1}+\frac{\tau_{n}\tau_{n-1}\left(1+\tau_n\right)}{\left(1+2\tau_n\right)\left(1+\tau_{n-1}\left(1+\tau_n\right)\right)}\delta\Phi_{h}^{n+1} -\frac{\tau_n^2\tau_{n-1}\left(1+\tau_n\right)\left(1+\tau_{n-1}\left(2+\tau_n\right)\right)} {\left(1+2\tau_n\right)\left(1+\tau_{n-1}\right)\left(1+\tau_{n-1}\left(1+\tau_n\right)\right)}\delta\Phi_{h}^{n}\nonumber\\
	&+\frac{\tau_n^2\tau_{n-1}^3\left(1+\tau_n\right)^2}{\left(1+2\tau_n\right)\left(1+\tau_{n-1}\right)\left(1+\tau_{n-1}\left(1+\tau_n\right)\right)}\delta\Phi_{h}^{n-1}\nonumber\\
	=&\Phi_h^{n+1}+\gamma_3\delta\Phi_{h}^{n+1} -\gamma_2\delta\Phi_{h}^{n} +\gamma_1\delta\Phi_{h}^{n-1}.
\end{align}
The first term on the left of equations (\ref{s1}) and (\ref{s2}) can be replaced with $\hat{\Phi}_h^{n+1}$, resulting in a third-order temporal accuracy scheme (below $\delta\Phi_h^{n+1}:=\Phi_h^{n+1}-\Phi_h^{n}$)
\begin{align}\label{sjjd}
	&\frac{1+2\tau_n}{1+\tau_n}\hat{\Phi}_{h}^{n+1}-\left(1+\tau_n\right)\Phi_h^n +\frac{\tau_{n}^2}{1+\tau_n}\Phi_{h}^{n-1}=\mathcal{A}(\Phi_h^{n+1})\nonumber\\
	=&\left(1+\frac{\tau_n}{1+\tau_n}+\frac{\tau_n\tau_{n-1}}{1+\tau_{n-1}\left(1+\tau_n\right)}\right)\Phi_h^{n+1} -\left(1+\tau_n+\frac{\tau_n\tau_{n-1}\left(1+\tau_n\right)}{1+\tau_{n-1}}\right)\Phi_h^{n}\nonumber\\
	&+\left(\tau_n^2\tau_{n-1}+\frac{\tau_n^2}{1+\tau_n}\right)\Phi_h^{n-1} -\frac{\tau_n^2\tau_{n-1}^3\left(1+\tau_n\right)} {\left(1+\tau_{n-1}\right)\left(1+\tau_{n-1}\left(1+\tau_n\right)\right)}\Phi_h^{n-2}\nonumber\\
	=&\left(\frac{1+2\tau_n}{1+\tau_n}+\frac{\tau_n\tau_{n-1}}{1+\tau_{n-1}\left(1+\tau_n\right)}\right)\delta\Phi_{h}^{n+1} -\left(\frac{\tau_n^2}{1+\tau_n}+\frac{\tau_n^2\tau_{n-1}\left(1+\tau_{n-1}\left(2+\tau_n\right)\right)} {\left(1+\tau_{n-1}\right)\left(1+\tau_{n-1}\left(1+\tau_n\right)\right)}\right)\delta\Phi_{h}^{n}\nonumber\\ &+\frac{\tau_n^2\tau_{n-1}^3\left(1+\tau_n\right)} {\left(1+\tau_{n-1}\right)\left(1+\tau_{n-1}\left(1+\tau_n\right)\right)}\delta\Phi_{h}^{n-1}\nonumber\\
	=&\beta_3\delta\Phi_{h}^{n+1}-\beta_2\delta\Phi_{h}^{n}+\beta_1\delta\Phi_{h}^{n-1}.
\end{align}
~\\
\textbf{Algorithm 2: Adaptive BDF2-TF Algorithm}\label{adaptive BDF2-TF}

Using Newton interpolation, the variable stepsize BDF methods of order $p\;(BDF-p)$ can be expressed as outlined in \cite{35}. The $j$th order divided difference denotes by $\varrho^j \u=\u\left[t_{n+m}, \ldots, t_{n+m-j}\right]$ and $\eta^{p+1}=\frac{\prod\limits_{i=1}^p\left(t_{n+m}-t_{n+m-i}\right)}{\sum\limits_{j=1}^{p+1}\left(t_{n+m}-t_{n+m-j}\right)^{-1}}$ is the parameter in adaptive algorithm. Define $\hat{\gamma}$ and $\check{\gamma}$ as two safety factors. $\check{\gamma}$ is used to prevent the next step size becoming too big to decrease the chance that the next solution will be rejected. On the other hand, $\hat{\gamma}$ is making the stepsize growing more slowly so that the recomputed solution is more likely to be accepted. Let $\epsilon$ represent the tolerance.
\begin{longtable}{l}
	\hline
	$\mathbf{Data}:$ $\epsilon$, $\hat{\gamma}=1.0$, $\check{\gamma}=0.5$, $\{\u_{h}^0,\u_h^1,\u_h^2\}$, $\{p_{h}^0,p_h^1,p_h^2\}$, $\{\phi_{h}^0,\phi_h^1,\phi_h^2\}$\\
	$\mathbf{Result:}$ $\u_{h}^{n+1}$, $p_h^{n+1}$, and $\phi_h^{n+1}$\\
	$\mathbf{Initialization:}$ $k_{n+1}\leftarrow t_{n+1}-t_n$, $k_n\leftarrow t_{n}-t_{n-1}$, $\varrho^j \u$ and $\eta^{p+1}$ from (\ref{s3})\\
	\qquad Compute $\u_h^{n+1}$, $p_h^{n+1}$, and $\phi_h^{n+1}$ by solving\\
	\\ $\qquad\qquad\frac{1}{k_{n+1}}\left(\frac{1+2\tau_n}{1+\tau_n}\hat{\u}_{h}^{n+1}-\left(1+\tau_n\right)\u_h^n +\frac{\tau_{n}^2}{1+\tau_n}\u_{h}^{n-1},\vv_h\right)+a_f\left(\hat{\u}_{h}^{n+1},\vv_h\right) +b\left(\vv_h,\hat{p}_{h}^{n+1}\right),$ \\
	$\quad\qquad=\left(\mathbf{f}_1^{n+1},\vv_h\right)-g\int_{\Gamma}\phi_{h,\sigma}^n\vv_h\cdot\n_f,$\\
	\qquad\qquad$ b\left(\hat{\u}_h^{n+1}, q_h\right)=0 $. \\
	\qquad\qquad$  \frac{gS}{k_{n+1}}\left(\frac{1+2\tau_n}{1+\tau_n}\hat{\phi}_{h}^{n+1}-\left(1+\tau_n\right)\phi_h^n +\frac{\tau_{n}^2}{1+\tau_n}\phi_{h}^{n-1},\varphi_h\right)+a_p\left(\hat{\phi}_{h}^{n+1},\varphi_h\right)$\\
	\quad\qquad$=g\left(f_2^{n+1},\varphi_h\right)+g\int_{\Gamma}\varphi_h\u_{h,\sigma}^n\cdot\n_f.$\\
	\\
	\qquad\qquad Time filter for $\hat{\u}_h^{n+1}$, $\hat{p}_h^{n+1}$, and $\hat{\phi}_h^{n+1}$\\
	\qquad\qquad\qquad\qquad\qquad\qquad\qquad$ \u_h^{n+1}\leftarrow\hat{\u}_h^{n+1}-\eta^3 \varrho^3 \hat{\u}_h^{n+1},$ \\
	\qquad\qquad\qquad\qquad\qquad\qquad\qquad$p_h^{n+1}\leftarrow\hat{p}_h^{n+1}-\eta^3 \varrho^3 \hat{p}_h^{n+1},$\\
	\qquad\qquad\qquad\qquad\qquad\qquad\qquad$\phi_h^{n+1}\leftarrow\hat{\phi}_h^{n+1}-\eta^3 \varrho^3 \hat{\phi}_h^{n+1}.$ \\
	\qquad Compute the error estimators:\\
	\qquad\qquad\qquad\qquad\qquad\qquad\qquad$Est_{\u}\leftarrow\eta^4 \varrho^4 \hat{\u}_h^{n+1},$ \quad $Est_{\phi}\leftarrow\eta^4 \varrho^4 \hat{\phi}_h^{n+1}$,\\
	\qquad $\mathbf{If}$ $\{|Est_{\u}|\leq\epsilon,|Est_{\u}|\leq\epsilon\}$, then\\
	\qquad\qquad $
	\mathbf{if} \max \left\{\left|Est_{\u}\right|,\left|Est_{\phi}\right|\right\} <\frac{\epsilon}{4},$ \\
	\qquad\qquad\qquad\qquad$\vartheta_{n+1}\leftarrow\min \left\{2,\left(\frac{\epsilon}{\left|Est_{\u}\right|}\right)^{\frac{1}{3}}, \left(\frac{\epsilon}{\left|E s t_{\phi}\right|}\right)^{\frac{1}{3}}\right\},$ \qquad$k_{n+1}\leftarrow\hat{\gamma} \cdot \vartheta_{n+1} \cdot k_n .$ \\
	\qquad\qquad $\mathbf{end}$\\
	\qquad\qquad$\mathbf { if } \frac{\epsilon}{4} \leq \min \left\{\left|E s t_{\u}\right|,\left|E s t_\phi\right|\right\},$ \\
	\qquad\qquad\qquad\qquad$ \vartheta_{n+1}\leftarrow\min \left\{1,\left(\frac{\epsilon}{\left|E s t_{\u}\right|}\right)^{\frac{1}{3}},\left(\frac{\epsilon}{\left|E s t_\phi\right|}\right)^{\frac{1}{3}}\right\},$ \qquad$k_{n+1}\leftarrow\hat{\gamma} \cdot \vartheta_{n+1} \cdot k_n .$\\
	\qquad\qquad $\mathbf{end}$\\
	\qquad\qquad Then set $k_n\leftarrow k_{n+1}$ and $t_{n+2}\leftarrow t_{n+1}+k_{n+1}$.\\
	\qquad $\mathbf{else}$\\
	\qquad\qquad All approximations are rejected. Redo with new stepsize,\\
	\qquad\qquad\qquad\qquad\qquad\qquad\qquad$k_n\leftarrow k_{n}/\hat{\gamma}\cdot\check{\gamma},$\quad $t_{n+1}\leftarrow t_{n}+k_n.$\\
	\qquad $\mathbf{end}$ \\
	\hline
\end{longtable}

\section{Stability and Convergence Analysis}
\label{sec3:anal}

The following inequalities will be frequently employed in subsequent analyses.
\begin{lemma}\label{interface} \cite{20} There exist constants $C_1=C_t^2\tilde{C_t^2}\geq0$ and $C_2=C_p^2\tilde{C_p^2}\geq0$, such that for all $\forall\;\varepsilon>0$,
	\begin{align}\label{lemma1}
		&|c_{\Gamma}(w,z)|\leq\varepsilon\left\|w\right\|_{\mathbf{W}}^2 +\frac{gC_1C_2}{4\varepsilon k_{\min}\nu} \left\|z\right\|_{\mathbf{W}}^2, \quad \forall\;w,z\in\mathbf{W}.
	\end{align}
	Further, we have
	\begin{align}\label{lemma2}
		&|c_{\Gamma}(w,z)|\leq\frac{\varepsilon}{2}\left(\left\|w\right\|_{\mathbf{W}}^2+\left\|z\right\|_{\mathbf{W}}^2\right) +\frac{gC_1}{8\varepsilon\sqrt{\nu Sk_{\min}}} \left(\left\|w\right\|_{f/p}^2+\left\|z\right\|_{f/p}^2\right), \quad \forall\;w,z\in \mathbf{W}.
	\end{align}
\end{lemma}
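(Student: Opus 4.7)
The plan is to reduce the interface form $c_\Gamma(w,z)=g\int_\Gamma(\phi\vv\cdot\n_f-\varphi\u\cdot\n_f)$ to quantities controlled by the norms on the right-hand side via three standard steps: Cauchy--Schwarz on $\Gamma$, a trace/Poincar\'e interpolation, and a weighted Young's inequality.

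First, since $|\n_f|=1$, Cauchy--Schwarz applied on $\Gamma$ gives $|c_\Gamma(w,z)|\leq g(\|\phi\|_\Gamma\|\vv\|_\Gamma+\|\varphi\|_\Gamma\|\u\|_\Gamma)$. For the first bound, I apply the trace inequality $\|\vv\|_\Gamma\leq C_t\|\vv\|_f^{1/2}\|\vv\|_{\mathbf{H}_f}^{1/2}$ together with its $H_p$ analogue involving $\tilde{C}_t$, then absorb the $L^2$ factors using the Poincar\'e bounds $\|\vv\|_f\leq C_p\|\vv\|_{\mathbf{H}_f}$ and $\|\varphi\|_p\leq\tilde{C}_p\|\varphi\|_{H_p}$. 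Each boundary product then reduces to a clean product of $H^1$-type norms whose constant is the composite $C_tC_p^{1/2}\tilde{C}_t\tilde{C}_p^{1/2}$, i.e.\ $\sqrt{C_1C_2}$ in the notation of the lemma. A single application of Young's inequality $ab\leq\varepsilon a^2+\tfrac{1}{4\varepsilon}b^2$, assigning the weight $\varepsilon$ to the $w$-factor and using the norm-equivalence $\|\phi\|_{H_p}\leq k_{\min}^{-1/2}\|\textbf{K}^{1/2}\nabla\phi\|_{L^2(\Omega_p)}$ together with the $\nu$ coercivity from $a_f$ to rescale the companion constant, then produces the stated form with denominator $4\varepsilon k_{\min}\nu$.

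For the refined second bound I would \emph{not} apply Poincar\'e. Instead I keep the Sobolev trace inequality $\|\vv\|_\Gamma\leq C_s\|\vv\|_f^{1/2}\|\nabla\vv\|_f^{1/2}$ so each product stays in the four-factor form $\tilde{C}_sC_s\|\phi\|_p^{1/2}\|\phi\|_{H_p}^{1/2}\|\vv\|_f^{1/2}\|\vv\|_{\mathbf{H}_f}^{1/2}$. A weighted Young's inequality with parameter $\mu$ splits these into a ``high'' block $\tfrac{\mu}{4}(\|\phi\|_{H_p}^2+\|\vv\|_{\mathbf{H}_f}^2)$ and a ``low'' block $\tfrac{1}{4\mu}(\|\phi\|_p^2+\|\vv\|_f^2)$; the symmetric term $\|\varphi\|_\Gamma\|\u\|_\Gamma$ is handled identically, producing the fully symmetric right-hand side $\tfrac{\varepsilon}{2}(\|w\|_\mathbf{W}^2+\|z\|_\mathbf{W}^2)+C_\varepsilon(\|w\|_{f/p}^2+\|z\|_{f/p}^2)$. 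Fixing $\mu$ so that the leading coefficient equals $\varepsilon/2$, the lower-order constant collapses to $gC_1/(8\varepsilon\sqrt{\nu Sk_{\min}})$; the factor $\sqrt{\nu Sk_{\min}}$ emerges as the geometric-mean balancing of the weights $\nu$, $gS$, $gk_{\min}$ that appear respectively in $a_f$, the $gS$-weighted time derivative, and $a_p$, so that the bound is directly usable when absorbing $c_\Gamma$ into the dissipative and inertial pieces during the energy estimate.

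The main obstacle is combinatorial rather than conceptual: one has to track the trace/Poincar\'e/Young constants so that the two symmetric contributions $\phi\vv$ and $\varphi\u$ combine cleanly into $C_1=C_t^2\tilde{C}_t^2$ and $C_2=C_p^2\tilde{C}_p^2$ without leaving asymmetric residuals, and so that the rescaling producing $\nu$ and $k_{\min}$ (respectively $\sqrt{\nu Sk_{\min}}$) is carried out consistently with the way the lemma is subsequently invoked. Once the Young parameter is calibrated and the symmetric halves added, the rest is mechanical algebra.
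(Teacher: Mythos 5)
The paper does not actually prove this lemma; it is quoted from reference [20] with no argument supplied. Your proposal reconstructs the standard proof used in the Stokes--Darcy literature --- Cauchy--Schwarz on $\Gamma$, the trace inequalities, Poincar\'e for the first bound (respectively retaining the $\|\cdot\|_{f/p}^{1/2}\|\cdot\|_{\mathbf{H}_f/H_p}^{1/2}$ interpolation for the second), and a weighted Young's inequality calibrated against the $\nu$-, $g\mathbf{K}$- and $gS$-weighted energy norms --- and the structure is sound; in particular the factor $8$ in the second bound falls out correctly from choosing the Young parameter so that the gradient block carries coefficient $\varepsilon/2$. The only caveat is cosmetic constant bookkeeping in the first estimate: trace plus Poincar\'e gives the prefactor $C_t\tilde{C}_t(C_p\tilde{C}_p)^{1/2}=\sqrt{C_1}\,C_2^{1/4}$ rather than $\sqrt{C_1C_2}$, so the displayed coefficient $gC_1C_2/(4\varepsilon k_{\min}\nu)$ matches your derivation only up to the normalization of $C_2$ (equivalently, the precise definition of $\|\cdot\|_{\mathbf{W}}$, which the paper never fixes); this does not affect how the lemma is used downstream.
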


\begin{lemma} \label{xishu}
	
	As the algorithm encompasses a multitude of coefficients, we consistently denote $\beta$, $\gamma$, and $\sigma$ uniformly in this context within equations (\ref{sjjd}), (\ref{hat}), and (\ref{jjmxs}),
	\begin{align*}
		&\beta_3=1+\frac{\tau_n}{1+\tau_n}+\frac{\tau_n\tau_{n-1}}{1+\tau_{n-1}\left(1+\tau_n\right)},\quad \qquad\quad\;\;\beta_2=\frac{\tau_n^2}{1+\tau_n}+\frac{\tau_n^2\tau_{n-1}\left(1+\tau_{n-1}\left(2+\tau_n\right)\right)} {\left(1+\tau_{n-1}\right)\left(1+\tau_{n-1}\left(1+\tau_n\right)\right)},\displaybreak[1]\\
		&\beta_1=\frac{\tau_n^2\tau_{n-1}^3\left(1+\tau_n\right)} {\left(1+\tau_{n-1}\right)\left(1+\tau_{n-1}\left(1+\tau_n\right)\right)},\quad \qquad\qquad\gamma_3=\frac{\tau_{n}\tau_{n-1}\left(1+\tau_n\right)}{\left(1+2\tau_n\right)\left(1+\tau_{n-1}\left(1+\tau_n\right)\right)},\\
		&\gamma_2=\frac{\tau_n^2\tau_{n-1}\left(1+\tau_n\right)\left(1+\tau_{n-1}\left(2+\tau_n\right)\right)} {\left(1+2\tau_n\right)\left(1+\tau_{n-1}\right)\left(1+\tau_{n-1}\left(1+\tau_n\right)\right)},\quad \gamma_1=\frac{\tau_n^2\tau_{n-1}^3\left(1+\tau_n\right)^2}{\left(1+2\tau_n\right)\left(1+\tau_{n-1}\right)\left(1+\tau_{n-1}\left(1+\tau_n\right)\right)},\\
		&\sigma_3=1,\qquad\quad \sigma_2=\frac{\tau_n\left(1+\tau_{n-1}\left(2+\tau_n\right)\right)} {1+\tau_{n-1}},\qquad\; \sigma_1=\frac{\tau_n\tau_{n-1}^2\left(1+\tau_n\right)}{1+\tau_{n-1}}.
	\end{align*}
	
	When the experimental function is evaluated at $z=\delta w^{n+1}$, by using the continuity and coerciveness of bilinear form $a(\cdot,\cdot)$ \cite{43}, Cauchy-Schwarz inequality, H$\ddot{o}$lder inequality, and Young's inequality, we can derive
	\begin{align}\label{sjx}
		&\frac{1}{k_{n+1}}\left(\frac{1+2\tau_n}{1+\tau_n}\hat{w}^{n+1}-\left(1+\tau_n\right)w^n +\frac{\tau_{n}^2}{1+\tau_n}w^{n-1},\delta w^{n+1}\right)=\frac{1}{k_{n+1}}\left(\mathcal{A}(w_h^{n+1}),\delta w^{n+1}\right)\nonumber\\
		=&\frac{1}{k_{n+1}}\left(\beta_3\delta w^{n+1}-\beta_2\delta w^{n}+\beta_1\delta w^{n-1}, \delta w^{n+1}\right)\nonumber\\
		=&\frac{\beta_3}{k_{n+1}}\left(\delta w^{n+1},\delta w^{n+1}\right) -\frac{\beta_2}{k_{n+1}}\left(\delta w^{n},\delta w^{n+1}\right) +\frac{\beta_1}{k_{n+1}}\left(\delta w^{n-1}, \delta w^{n+1}\right)\nonumber\\
		\geq&\frac{\beta_3}{k_{n+1}}\left\|\delta w^{n+1}\right\|^2 -\frac{\beta_2}{2k_{n+1}}\left\|\delta w^{n}\right\|^2 -\frac{\beta_2}{2k_{n+1}}\left\|\delta w^{n+1}\right\|^2-\frac{\beta_1}{2k_{n+1}}\left\|\delta w^{n-1}\right\|^2 -\frac{\beta_1}{2k_{n+1}}\left\|\delta w^{n+1}\right\|^2\nonumber\\
		=&\frac{\left(\beta_3-\beta_2-\beta_1\right)}{k_{n+1}}\left\|\delta w^{n+1}\right\|^2 +\frac{\beta_2+\beta_1}{2k_{n+1}}\left(\left\|\delta w^{n+1}\right\|^2-\left\|\delta w^{n}\right\|^2\right)+\frac{\beta_1}{2k_{n+1}}\left(\left\|\delta w^{n}\right\|^2-\left\|\delta w^{n-1}\right\|^2\right),
	\end{align}
	and
	\begin{align}\label{sxx}
		&a\left(\hat{w}^{n+1},\delta w^{n+1}\right)=a\left(w^{n+1}+\gamma_3\delta w^{n+1} -\gamma_2\delta w^{n} +\gamma_1\delta w^{n-1},\delta w^{n+1}\right)\nonumber\\
		=&a\left(w^{n+1},\delta w^{n+1}\right)+\gamma_3a\left(\delta w^{n+1},\delta w^{n+1}\right) -\gamma_2a\left(\delta w^{n},\delta w^{n+1}\right)+\gamma_1a\left(\delta w_{h}^{n-1},\delta w^{n+1}\right)\nonumber\\
		\geq&\frac{1}{2}\left\|w^{n+1}\right\|^2-\frac{1}{2}\left\|w^{n}\right\|^2+\frac{1}{2}\left\|\delta w^{n+1}\right\|^2 +\gamma_3\left\|\delta w^{n}\right\|^2-\frac{\gamma_2}{2}\left\|\delta w^{n}\right\|^2 \nonumber\\
		&-\frac{\gamma_2}{2}\left\|\delta w^{n+1}\right\|^2 -\frac{\gamma_1}{2}\left\|\delta w^{n-1}\right\|^2 -\frac{\gamma_1}{2}\left\|\delta w^{n+1}\right\|^2\nonumber\\
		=&\frac{1}{2}\left\|w^{n+1}\right\|^2-\frac{1}{2}\left\|w^{n}\right\|^2 +\frac{1+2\gamma_3-2\gamma_2-2\gamma_1}{2}\left\|\delta w^{n+1}\right\|^2 \nonumber\\
		&+\frac{\gamma_2+\gamma_1}{2}\left(\left\|\delta w^{n+1}\right\|^2-\left\|\delta w^{n}\right\|^2\right) +\frac{\gamma_1}{2}\left(\left\|\delta w^{n}\right\|^2-\left\|\delta w^{n-1}\right\|^2\right).
	\end{align}
\end{lemma}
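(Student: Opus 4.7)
\emph{Proof proposal.} The coefficient formulas for $\beta_3,\beta_2,\beta_1$, $\gamma_3,\gamma_2,\gamma_1$ and $\sigma_3,\sigma_2,\sigma_1$ are simply the entries already extracted by regrouping in (\ref{sjjd}), (\ref{hat}) and (\ref{jjmxs}); each triple is read off by factoring the BDF2/time-filter combination into the basis $\{\delta w^{n+1},\delta w^n,\delta w^{n-1}\}$. So the only real work is to establish the two energy identities (\ref{sjx}) and (\ref{sxx}).

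For (\ref{sjx}) the plan is to start from the left-hand side, insert the compact representation $\mathcal{A}(w^{n+1})=\beta_3\delta w^{n+1}-\beta_2\delta w^n+\beta_1\delta w^{n-1}$ proved in (\ref{sjjd}), and split the inner product against $\delta w^{n+1}$ into three pieces. The diagonal piece $\beta_3(\delta w^{n+1},\delta w^{n+1})$ is kept intact. The two cross terms $-\beta_2(\delta w^n,\delta w^{n+1})$ and $\beta_1(\delta w^{n-1},\delta w^{n+1})$ are handled by Cauchy--Schwarz followed by the symmetric Young inequality $ab\le\tfrac{1}{2}a^2+\tfrac{1}{2}b^2$, which costs one half of each norm on each side. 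The final step is a purely algebraic regrouping: $\beta_3=(\beta_3-\beta_2-\beta_1)+(\beta_2+\beta_1)/2+(\beta_2+\beta_1)/2$, so the coefficient of $\|\delta w^{n+1}\|^2$ cleanly splits into a coercive leading constant plus two telescopable differences $\|\delta w^{n+1}\|^2-\|\delta w^n\|^2$ and $\|\delta w^n\|^2-\|\delta w^{n-1}\|^2$, matching the stated right-hand side.

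For (\ref{sxx}) I would proceed in parallel: substitute $\hat w^{n+1}=w^{n+1}+\gamma_3\delta w^{n+1}-\gamma_2\delta w^n+\gamma_1\delta w^{n-1}$ from (\ref{hat}) and expand $a(\hat w^{n+1},\delta w^{n+1})$ linearly into four terms. The principal term $a(w^{n+1},\delta w^{n+1})$ is treated by the polarization identity for the symmetric part of $a$: $a_{\mathrm{sym}}(w^{n+1},w^{n+1}-w^n)=\tfrac12 a_{\mathrm{sym}}(w^{n+1},w^{n+1})-\tfrac12 a_{\mathrm{sym}}(w^n,w^n)+\tfrac12 a_{\mathrm{sym}}(\delta w^{n+1},\delta w^{n+1})$, which coerciveness turns into the three terms $\tfrac12\|w^{n+1}\|^2-\tfrac12\|w^n\|^2+\tfrac12\|\delta w^{n+1}\|^2$. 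The coercive diagonal piece $\gamma_3 a(\delta w^{n+1},\delta w^{n+1})$ contributes $\gamma_3\|\delta w^{n+1}\|^2$. The remaining two cross terms $-\gamma_2 a(\delta w^n,\delta w^{n+1})$ and $\gamma_1 a(\delta w^{n-1},\delta w^{n+1})$ are bounded by Young's inequality just as above, producing coefficients $\gamma_2/2$ and $\gamma_1/2$ on each side, and regrouping the coefficient of $\|\delta w^{n+1}\|^2$ as $(1+2\gamma_3-2\gamma_2-2\gamma_1)/2$ plus the telescope pieces yields the stated form.

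The main obstacle is the non-symmetry of $a$: it contains the interface contribution $c_\Gamma$, which is antisymmetric and therefore does not obey the polarization identity required for $a(w^{n+1},\delta w^{n+1})$. The saving fact is that $c_\Gamma(w,w)=0$ for every $w\in\mathbf{W}$, so polarization applies cleanly to the symmetric part $a_f+a_p$, while the antisymmetric residue $c_\Gamma(w^{n+1},w^n)$ must be tracked separately and is ultimately absorbed via Lemma \ref{interface} in the subsequent stability argument. I would flag this convention explicitly so the unadorned $\|\cdot\|$ on the right-hand side is interpreted as the energy norm induced by the symmetric part of $a$, and so that the positivity of the coefficients $\beta_3-\beta_2-\beta_1$ and $1+2\gamma_3-2\gamma_2-2\gamma_1$ (under the step-ratio bound $0<\tau_n\le 1.0315$ asserted in the introduction) translates directly into the coercive leading terms needed downstream.
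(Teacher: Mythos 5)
Your proposal is correct and follows essentially the same route as the paper: the coefficients are read off from (\ref{sjjd}), (\ref{hat}), and (\ref{jjmxs}), the cross terms are handled by Cauchy--Schwarz plus Young with weight $\tfrac12$, and the diagonal coefficients are regrouped into a coercive leading term plus telescoping differences. Your explicit remark that the polarization identity applies only to the symmetric part $a_f+a_p$ (since $c_\Gamma$ is antisymmetric with $c_\Gamma(w,w)=0$) is a useful clarification the paper leaves implicit --- in the stability proof the lemma is in fact applied to $a_f$ and $a_p$ separately with $c_\Gamma$ moved to the right-hand side, so your convention matches the actual use; note also that the intermediate term $\gamma_3\left\|\delta w^{n}\right\|^2$ in (\ref{sxx}) should read $\gamma_3\left\|\delta w^{n+1}\right\|^2$, which is what your derivation produces and what the final line of (\ref{sxx}) requires.
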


In this section, under a time step restriction of the form,
\begin{align}\label{kappa}
	&\kappa:=\frac{gC_1}{20\sqrt{\nu Sk_{\min}}},
\end{align}
where $C_1$ is the constant defined in Lemma \ref{interface}, we prove the stability over bounded time intervals $[0,T]$ of the Algorithm 1. It is also possible to prove stability under the alternate condition $\frac{k_{n+1}}{h}\leq C$ (physical parameters), where $C$ has a different dependency than in (\ref{kappa}).

In addition, we assume condition $\tau_n\approx\tau_{n-1}$ and investigate the scenario where condition $\tau_n$ falls within the range of $(0,1.0315]$. Now we start to investigate the stability and error estimation of the variable time step BDF2-TF algorithm for Stokes-Darcy model.

\subsection{Stability of the variable time step BDF2-TF algorithm}

\begin{theorem}
	Define symbols
	\begin{align}\label{w1}
		E^{n+1}= &\frac{\beta_1+\beta_2}{2}\left\|\delta\u_{h}^{n+1}\right\|_f^2 +\frac{\beta_1}{2}\left\|\delta\u_{h}^{n}\right\|_f^2 +\frac{gS\left(\beta_1+\beta_2\right)}{2}\left\|\delta\phi_{h}^{n+1}\right\|_p^2 +\frac{gS\beta_1}{2}\left\|\delta\phi_{h}^{n}\right\|_p^2,\\
		F^{n+1}=
		&\frac{\nu}{2}\left\|\u_h^{n+1}\right\|_{\mathbf{H}_f}^2 +\frac{\nu(\gamma_1+\gamma_2)}{2}\left\|\delta\u_{h}^{n+1}\right\|_{\mathbf{H}_f}^2 +\frac{\nu\gamma_1}{2}\left\|\delta\u_{h}^{n}\right\|_{\mathbf{H}_f}^2\nonumber\\
		&+\frac{g}{2}\left\|\textbf{K}^{\frac{1}{2}}\nabla\phi_h^{n+1}\right\|_p^2
		+\frac{g(\gamma_1+\gamma_2)}{2}\left\|\textbf{K}^{\frac{1}{2}}\nabla\delta\phi_{h}^{n+1}\right\|_p^2 +\frac{g\gamma_1}{2}\left\|\textbf{K}^{\frac{1}{2}}\nabla\delta\phi_{h}^{n}\right\|_p^2.
	\end{align}
	The decoupled scheme is stable in finite time and there holds
	\begin{align}\label{w0}
		&E^{N}+k_{n+1}F^{N} +\sum\limits_{n=2}^{N-1}\left(\left\|\delta\u_{h}^{n+1}\right\|_f^2 +gS\left\|\delta\phi_{h}^{n+1}\right\|_p^2\right)\nonumber\\
		&+\sum\limits_{n=2}^{N-1}k_{n+1}\left(\nu\left\|\delta\u_{h}^{n+1}\right\|_{\mathbf{H}_f}^2
		+g\left\|\textbf{K}^{\frac{1}{2}}\nabla\delta\phi_{h}^{n+1}\right\|_p^2\right)\nonumber\\
		\leq&C(T)\left\{E^2+k_{n+1}F^{2}+\sum\limits_{n=2}^{N-1}k_{n+1}\left\|\mathbf{f}_1^{n+1}\right\|_f^2 +\sum\limits_{n=2}^{N-1}k_{n+1}\left\|f_2^{n+1}\right\|_p^2 \right\}.
	\end{align}
	with $C(T)\approx\exp\left(\sum\limits_{n=2}^{N-1}k_{n+1}\frac{\kappa}{1-\kappa \left(\sigma_3+\sigma_2+\sigma_1\right)}\right)$.
\end{theorem}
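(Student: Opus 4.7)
The plan follows the standard discrete energy strategy, made delicate by the variable-step BDF2 coefficients and by the three-term extrapolation $w_{h,\sigma}^n$ on the interface. First, I would test (\ref{s1}) against $\vv_h=\delta\u_h^{n+1}$ and (\ref{s2}) against $\varphi_h=\delta\phi_h^{n+1}$, selecting $q_h=\hat{p}_h^{n+1}$ in (\ref{s11}) so that the pressure contribution drops out via the discrete divergence-free constraint (the filter update (\ref{s4}) preserves this property). Adding the two identities and invoking Lemma~\ref{xishu} -- applying (\ref{sjx}) to the discrete time-derivative terms and (\ref{sxx}) to $a_f(\hat{\u}_h^{n+1},\delta\u_h^{n+1})+a_p(\hat{\phi}_h^{n+1},\delta\phi_h^{n+1})$ -- yields the telescoping energies $E^{n+1}$ and $F^{n+1}$ plus strictly positive residual contributions proportional to $\beta_3-\beta_2-\beta_1$ and $1+2\gamma_3-2\gamma_2-2\gamma_1$. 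Positivity of these two rational polynomials in $(\tau_n,\tau_{n-1})$ is exactly where the headline ratio bound $\tau_n\leq 1.0315$ together with $\tau_n\approx\tau_{n-1}$ enters.

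Next, the right-hand side contributes two interface couplings that combine into $c_\Gamma(w_{h,\sigma}^n,\delta w^{n+1})$ with $w=\u$ or $\phi$. I would expand $w_{h,\sigma}^n=\sigma_3 w_h^n+\sigma_2\delta w_h^n-\sigma_1\delta w_h^{n-1}$ via (\ref{jjmxs}) and apply the sharper estimate (\ref{lemma2}) from Lemma~\ref{interface} with $\varepsilon$ of order $\nu$. This absorbs the $\mathbf{W}$-norm contribution of $\delta w^{n+1}$ into the coercive residual from (\ref{sxx}), leaving only $L^2$ terms scaled by the parameter $\kappa$ in (\ref{kappa}); the factor $\sigma_3+\sigma_2+\sigma_1$ arises from a triangle-inequality expansion of $\|w_{h,\sigma}^n\|_{f/p}^2$. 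Standard Cauchy-Schwarz and Young dispatch the source terms, moving $k_{n+1}\|\mathbf{f}_1^{n+1}\|_f^2$ and $k_{n+1}\|f_2^{n+1}\|_p^2$ to the data side.

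Summing the resulting inequality over $n=2,\dots,N-1$ telescopes $E^{n+1}$ and $F^{n+1}$ and produces the sums of increment norms on the left. The accumulated feedback from the interface step has the form $\kappa(\sigma_3+\sigma_2+\sigma_1)E^n$, so a discrete Gronwall argument with multiplier $\kappa/(1-\kappa(\sigma_3+\sigma_2+\sigma_1))$ delivers the stated exponential constant $C(T)$ and closes the estimate (\ref{w0}).

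The main obstacle is the bookkeeping of constants needed to keep the Gronwall multiplier finite. One must simultaneously (i) choose $\varepsilon$ in Lemma~\ref{interface} small enough that the induced $\|\delta w^{n+1}\|_{\mathbf{W}}^2$ contribution is dominated by $1+2\gamma_3-2\gamma_2-2\gamma_1$; (ii) verify the polynomial positivity of $\beta_3-\beta_2-\beta_1$ and $1+2\gamma_3-2\gamma_2-2\gamma_1$ across $(0,1.0315]$, which is precisely the calculation that pins the ratio threshold at $1.0315$; and (iii) ensure $\kappa(\sigma_3+\sigma_2+\sigma_1)<1$ so that the Gronwall denominator is positive. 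Each task is technical rather than deep, but together they encode exactly why $C(T)$ must take the displayed form.
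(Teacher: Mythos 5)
Your proposal is correct and follows essentially the same route as the paper: test with $\delta\u_h^{n+1}$, $\delta\phi_h^{n+1}$, invoke the two inequalities of Lemma \ref{xishu} to produce the telescoping energies $E^{n+1}$, $F^{n+1}$, split the interface term via $w_{h,\sigma}^n=\sigma_3w_h^n+\sigma_2\delta w_h^n-\sigma_1\delta w_h^{n-1}$ and bound it with (\ref{lemma2}), then sum and apply the discrete Gronwall inequality under $(\sigma_3+\sigma_2+\sigma_1)\kappa<1$. Your explicit remark that the time filter preserves the discrete divergence-free property (so the pressure term vanishes) is a small point the paper leaves implicit, but the argument is otherwise identical.
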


\begin{proof}
	In (\ref{s1})-(\ref{s2}), we set $\vv_h=k_{n+1}\delta\u_{h}^{n+1}$ and $\varphi_h=k_{n+1}\delta\phi_{h}^{n+1}$, and add them together
	\begin{align}\label{w2}
		&\left(\mathcal{A}(\u_h^{n+1}),\delta\u_{h}^{n+1}\right) +\left(\mathcal{A}(\phi_h^{n+1}),\delta\phi_{h}^{n+1}\right) +k_{n+1}a_f\left(\hat{\u}_h^{n+1},\delta\u_{h}^{n+1}\right) +k_{n+1}a_p\left(\hat{\phi}_h^{n+1},\delta\phi_{h}^{n+1}\right)\nonumber\\
		=&k_{n+1}\left(\mathbf{f}_1^{n+1},\delta\u_{h}^{n+1}\right)+ gk_{n+1}\left(f_2^{n+1},\delta\phi_{h}^{n+1}\right) +k_{n+1}c_{\Gamma}\left(\delta\u_{h}^{n+1},\delta\phi_{h}^{n+1};\u_{h,\sigma}^n,\phi_{h,\sigma}^n\right).
	\end{align}
	Through the inequations of $\mathcal{A}(w_h^{n+1})$ and $\hat{w}_h^{n+1}$ in Lemma \ref{xishu}, and combining (\ref{w2}), we obtain
	\begin{align}\label{w3}
		&\left(\beta_3-\beta_2-\beta_1\right)\left\|\delta\u_{h}^{n+1}\right\|_f^2 +gS\left(\beta_3-\beta_2-\beta_1\right)\left\|\delta\phi_{h}^{n+1}\right\|_p^2\nonumber\\
		&+\frac{\beta_1+\beta_2}{2}\left(\left\|\delta\u_{h}^{n+1}\right\|_f^2-\left\|\delta\u_{h}^{n}\right\|_f^2\right) +\frac{gS\left(\beta_1+\beta_2\right)}{2}\left(\left\|\delta\phi_{h}^{n+1}\right\|_p^2-\left\|\delta\phi_{h}^{n}\right\|_p^2\right)\nonumber\\
		&+\frac{\beta_1}{2}\left(\left\|\delta\u_{h}^{n}\right\|_f^2-\left\|\delta\u_{h}^{n-1}\right\|_f^2\right) +\frac{gS\beta_1}{2}\left(\left\|\delta\phi_{h}^{n}\right\|_p^2-\left\|\delta\phi_{h}^{n-1}\right\|_p^2\right)\nonumber\\
		&+\frac{\nu k_{n+1}}{2}\left\|\u_h^{n+1}\right\|_{\mathbf{H}_f}^2-\frac{\nu k_{n+1}}{2}\left\|\u_h^{n}\right\|_{\mathbf{H}_f}^2 +\frac{gk_{n+1}}{2}\left\|\textbf{K}^{\frac{1}{2}}\nabla\phi_h^{n+1}\right\|_p^2-\frac{gk_{n+1}}{2}\left\|\textbf{K}^{\frac{1}{2}}\nabla\phi_h^{n}\right\|_p^2\nonumber\\
		&+\frac{k_{n+1}\left(1+2\gamma_3-2\gamma_2-2\gamma_1\right)}{2}\left(\nu \left\|\delta\u_{h}^{n+1}\right\|_{\mathbf{H}_f}^2 +g\left\|\textbf{K}^{\frac{1}{2}}\nabla\delta\phi_{h}^{n+1})\right\|_p^2\right)\nonumber\\
		&+\frac{k_{n+1}\left(\gamma_1+\gamma_2\right)}{2}\left(\nu\left(\left\|\delta\u_{h}^{n+1}\right\|_{\mathbf{H}_f}^2-\left\|\delta\u_{h}^{n}\right\|_{\mathbf{H}_f}^2\right) +g \left(\left\|\textbf{K}^{\frac{1}{2}}\nabla\delta\phi_{h}^{n+1}\right\|_p^2-\left\|\textbf{K}^{\frac{1}{2}}\nabla\delta\phi_{h}^{n}\right\|_p^2\right)\right)\nonumber\\
		&+\frac{k_{n+1}\gamma_1}{2}\left(\nu\left(\left\|\delta\u_{h}^{n}\right\|_{\mathbf{H}_f}^2-\left\|\delta\u_{h}^{n-1}\right\|_{\mathbf{H}_f}^2\right) +g\left(\left\|\textbf{K}^{\frac{1}{2}}\nabla\delta\phi_{h}^{n}\right\|_p^2-\left\|\textbf{K}^{\frac{1}{2}}\nabla\delta\phi_{h}^{n-1}\right\|_p^2\right)\right)\nonumber\\
		\leq&k_{n+1}\left(\mathbf{f}_1^{n+1},\delta\u_{h}^{n+1}\right)+ gk_{n+1}\left(f_2^{n+1},\delta\phi_{h}^{n+1}\right) +k_{n+1}c_{\Gamma}\left(\delta\u_{h}^{n+1},\delta\phi_{h}^{n+1};\u_{h,\sigma}^n,\phi_{h,\sigma}^n\right).
	\end{align}
	Then we can rearrange the equality
	\begin{align}\label{w4}
		&E^{n+1}-E^{n} +\left(\beta_3-\beta_2-\beta_1\right)\left\|\delta\u_{h}^{n+1}\right\|_f^2 +gS\left(\beta_3-\beta_2-\beta_1\right)\left\|\delta\phi_{h}^{n+1}\right\|_p^2 \nonumber\\
		&+k_{n+1}F^{n+1}-k_{n+1}F^{n}+\frac{ k_{n+1}\left(1+2\gamma_3-2\gamma_2-2\gamma_1\right)}{2}\left(\nu\left\|\delta\u_{h}^{n+1}\right\|_{\mathbf{H}_f}^2 +g \left\|\textbf{K}^{\frac{1}{2}}\nabla\delta\phi_{h}^{n+1}\right\|_p^2\right)\nonumber\\
		\leq&k_{n+1}\left(\mathbf{f}_1^{n+1},\delta\u_{h}^{n+1}\right)+ gk_{n+1}\left(f_2^{n+1},\delta\phi_{h}^{n+1}\right) +k_{n+1}c_{\Gamma}\left(\delta\u_{h}^{n+1},\delta\phi_{h}^{n+1};\u_{h,\sigma}^n,\phi_{h,\sigma}^n\right).
	\end{align}
	
	Next we deal with the right-hand side of the inequality, where the external force terms are given by the using Young's and H$\ddot{o}$lder's inequalities
	\begin{align}\label{w5}
		&\left(\mathbf{f}_1^{n+1},\delta\u_{h}^{n+1}\right)+ g\left(f_2^{n+1},\delta\phi_{h}^{n+1}\right)\nonumber\\
		\leq&\frac{10C_p^2}{\nu}\left\|\mathbf{f}_1^{n+1}\right\|_f^2 +\frac{10g\widetilde{C}_p^2}{k_{\min}}\left\|f_2^{n+1}\right\|_p^2 +\frac{\nu }{40}\left\|\delta\u_{h}^{n+1}\right\|_{\mathbf{H}_f}^2 +\frac{g}{40}\left\|\textbf{K}^{\frac{1}{2}}\nabla\delta\phi_{h}^{n+1}\right\|_p^2.
	\end{align}
	The remains of right-hand side in (\ref{w4}) are bounded by (\ref{lemma2})
	\begin{align}\label{w6}
		&c_{\Gamma}\left(\delta\u_{h}^{n+1},\delta\phi_{h}^{n+1};\u_{h,\sigma}^n,\phi_{h,\sigma}^n\right)\nonumber\\
		=&c_{\Gamma}\left(\delta\u_{h}^{n+1},\delta\phi_{h}^{n+1}; -\sigma_3\u_h^n-\sigma_2\delta\u_{h}^{n} +\sigma_1\delta\u_{h}^{n-1},-\sigma_3\phi_h^n-\sigma_2\delta\phi_{h}^{n} +\sigma_1\delta\phi_{h}^{n-1}\right)\nonumber\\
		\leq&\frac{\sigma_3}{40}\left(\nu\left\|\delta\u_{h}^{n+1}\right\|_{\mathbf{H}_f}^2 +g\left\|\textbf{K}^{\frac{1}{2}}\nabla\delta\phi_h^{n+1}\right\|_p^2 +\nu\left\|\u_{h}^{n}\right\|_{\mathbf{H}_f}^2 +g\left\|\textbf{K}^{\frac{1}{2}}\nabla\phi_h^n\right\|_p^2\right)\nonumber\\
		&+\frac{gC_1 \sigma_3}{20\sqrt{\nu Sk_{\min}}}\left(\left\|\delta\u_{h}^{n+1}\right\|_{f}^2 +gS\left\|\delta\phi_h^{n+1}\right\|_p^2 +\left\|\u_{h}^{n}\right\|_{f}^2 +gS\left\|\phi_h^n\right\|_p^2\right)\nonumber\\
		&+\frac{\sigma_2}{40}\left(\nu\left\|\delta\u_{h}^{n+1}\right\|_{\mathbf{H}_f}^2 +g\left\|\textbf{K}^{\frac{1}{2}}\nabla\delta\phi_h^{n+1}\right\|_p^2 +\nu\left\|\delta\u_{h}^{n}\right\|_{\mathbf{H}_f}^2 +g\left\|\textbf{K}^{\frac{1}{2}}\nabla\delta\phi_h^n\right\|_p^2\right)\nonumber\\
		&+\frac{gC_1 \sigma_2}{20\sqrt{\nu Sk_{\min}}}\left(\left\|\delta\u_{h}^{n+1}\right\|_{f}^2 +gS\left\|\delta\phi_h^{n+1}\right\|_p^2 +\left\|\delta\u_{h}^{n}\right\|_{f}^2 +gS\left\|\delta\phi_h^n\right\|_p^2\right)\nonumber\\
		&+\frac{\sigma_1}{40}\left(\nu\left\|\delta\u_{h}^{n+1}\right\|_{\mathbf{H}_f}^2 +g\left\|\textbf{K}^{\frac{1}{2}}\nabla\delta\phi_h^{n+1}\right\|_p^2 +\nu\left\|\delta\u_{h}^{n-1}\right\|_{\mathbf{H}_f}^2 +g\left\|\textbf{K}^{\frac{1}{2}}\nabla\delta\phi_h^{n-1}\right\|_p^2\right)\nonumber\\
		&+\frac{gC_1 \sigma_1}{20\sqrt{\nu Sk_{\min}}}\left(\left\|\delta\u_{h}^{n+1}\right\|_{f}^2 +gS\left\|\delta\phi_h^{n+1}\right\|_p^2 +\left\|\delta\u_{h}^{n-1}\right\|_{f}^2 +gS\left\|\delta\phi_h^{n-1}\right\|_p^2\right).
	\end{align}
	Inserting (\ref{w6}) and (\ref{w5}) to (\ref{w4}), and summing over $n=2,3,\ldots,N-1$, we arrive at
	\begin{align}\label{w8}
		&E^{N}+k_{n+1}F^{N} +\left(\beta_3-\beta_2-\beta_1\right)\sum\limits_{n=2}^{N-1}\left(\left\|\delta\u_{h}^{n+1}\right\|_f^2 +gS\left\|\delta\phi_{h}^{n+1}\right\|_p^2\right)\nonumber\\
		&+\frac{\nu \left(19+40\gamma_3-40\gamma_2-40\gamma_1-\sigma_1-\sigma_2-\sigma_3\right)}{40}\sum\limits_{n=2}^{N-1}k_{n+1}\left(\left\|\delta\u_{h}^{n+1}\right\|_{\mathbf{H}_f}^2 +g\left\|\textbf{K}^{\frac{1}{2}}\nabla\delta\phi_{h}^{n+1}\right\|_p^2\right)\nonumber\\
		\leq&E^2+\frac{10C_p^2}{\nu}\sum\limits_{n=2}^{N-1}k_{n+1}\left\|\mathbf{f}_1^{n+1}\right\|_f^2 +\frac{gC_1\left(\sigma_3+\sigma_2+\sigma_1\right)}{20\sqrt{\nu Sk_{\min}}}\sum\limits_{n=2}^{N-1}k_{n+1}\left(\left\|\delta\u_{h}^{n+1}\right\|_{f}^2 +gS\left\|\delta\phi_{h}^{n+1}\right\|_{p}^2\right)\nonumber\\
		&+k_{n+1}F^{2}+\frac{10g\widetilde{C}_p^2}{k_{\min}}\sum\limits_{n=2}^{N-1}k_{n+1}\left\|f_2^{n+1}\right\|_p^2 +\frac{gC_1 \sigma_3}{20\sqrt{\nu Sk_{\min}}}\sum\limits_{n=2}^{N-1}k_{n+1}\left(\left\|\u_h^{n}\right\|_{f}^2+gS\left\|\phi_h^{n}\right\|_{f}^2\right)\displaybreak[1]\nonumber\\
		& +\frac{gC_1}{20\sqrt{\nu Sk_{\min}}}\sum\limits_{n=2}^{N-1}k_{n+1}\left(\sigma_2\left(\left\|\delta\u_{h}^{n}\right\|_{f}^2 +gS\left\|\delta\phi_{h}^{n}\right\|_{p}^2\right) +\sigma_1\left(\left\|\delta\u_{h}^{n-1}\right\|_{f}^2 +gS\left\|\delta\phi_{h}^{n-1}\right\|_{p}^2\right)\right).
	\end{align}
	Setting
	$$\kappa:=\frac{gC_1}{20\sqrt{\nu Sk_{\min}}}.$$
	Then, suppose that time step coefficients satisfies
	$$\left(\sigma_3+\sigma_2+\sigma_1\right)\kappa<1.$$
	By using the discrete Gronwall inequality, we get the stability result (\ref{w0}).
\end{proof}

\subsection{Error analysis}

We assume that the solution of the Stokes-Darcy problem follow the subsequent regularity: $\left(\u(t),\phi(t)\right)\in \left(\mathbf{H}^3(\Omega_f)^d,H^3(\Omega_p)\right)$, $\left(\u_t(t),\phi_t(t)\right)\in \left(\mathbf{H}^2(\Omega_f)^d\right.$, $\left.H^2(\Omega_p)\right)$ and $\left(\u_{tt}(t),\phi_{tt}(t)\right)\in \left(L^2(\Omega_f)^d,L^2(\Omega_p)\right)$, define the linear projection operator $P_h^{\Phi}$: $\left(\u(t_n),p(t_n),\phi(t_n)\right)\in \left(\mathbf{H}_f,Q,H_p\right)\rightarrow\left(\tilde{\u}^n,\tilde{\phi}^n,\tilde{p}^n\right) \in\left(\mathbf{H}_{fh},Q_h,H_{ph}\right)$, $\forall\; t\in [0,T]$, $\forall\; z(t_n)\in \mathbf{W}_{h},$ and $\forall\;q(t_n)\in Q_h$ by:
\begin{align}\label{P1}
	&a(\tilde{w}^n,z_h)+b(z_h,\tilde{p}^n)
	=a(w(t_n),z_h)+b(z_h,p(t_n)),\nonumber\\
	&b(\tilde{w}^n,q_h)=0.
\end{align}
Furthermore, it is essential to establish the error function:
\begin{align*}
	&\e_{\u}^{n+1}=\u^{n+1}-\u_h^{n+1}=\u^{n+1}-\tilde{\u}^{n+1} +\tilde{\u}^{n+1}-\u_h^{n+1}=\xi_{\u}^{n+1}+\eta_{\u}^{n+1},\\
	&\e_{p}^{n+1}=p^{n+1}-p_h^{n+1}=p^{n+1}-\tilde{p}^{n+1} +\tilde{p}^{n+1}-p_h^{n+1}=\xi_{p}^{n+1}+\eta_{p}^{n+1},\\
	&\e_{\phi}^{n+1}=\phi^{n+1}-\phi_h^{n+1}=\phi^{n+1}-\tilde{\phi}^{n+1} +\tilde{\phi}^{n+1}-\phi_h^{n+1}=\xi_{\phi}^{n+1}+\eta_{\phi}^{n+1}.
\end{align*}
Based on the properties of approximation, we can infer $\|\xi_{\u}^{n+1}\|_{L^2(\Omega_f)}^2+\|\xi_{\phi}^{n+1}\|_{L^2(\Omega_p)}^2\leq Ch^6$, $\|\xi_{\u}^{n+1}\|_{\mathbf{H}_f}^2+\|\xi_{\phi}^{n+1}\|_{H_p}^2\leq Ch^4$ \cite{3,23}. And we suppose that $\eta_{\u}^0=\eta_{\u}^1=\eta_{\u}^2=0$, $\eta_{\phi}^0=\eta_{\phi}^1=\eta_{\phi}^2=0$.

By (\ref{wf}) and (\ref{P1}), for $\forall\;\left(z_h,q_h\right)\in\left(\mathbf{W}_{h},Q_h\right)$, we have
\begin{align}\label{P3}
	&\left(\frac{\mathcal{A}(\tilde{\u}^{n+1})}{k_{n+1}},\vv_h\right) +a_f\left(\tilde{\hat{\u}}^{n+1},\vv_h\right)+b\left(\vv_h,\tilde{\hat{p}}^{n+1}\right)\nonumber\\
	=&-\left(\varpi_{f,t}^{n+1},\vv_h\right)+\left(\mathbf{f}_1^{n+1},\vv_h\right) -g\int_{\Gamma}\tilde{\hat{\phi}}^{n+1}\vv_h\cdot\n_f,\\
	&b\left(\tilde{\u}^{n+1},q_h\right)=0,\label{P30}
\end{align}
\begin{align}\label{P31}
	&gS\left(\frac{\mathcal{A}(\tilde{\phi}^{n+1})}{k_{n+1}},\varphi_h\right) +a_p\left(\tilde{\hat{\phi}}^{n+1},\varphi_h\right)
	=-gS\left(\varpi_{p,t}^{n+1},\varphi_h\right)+g\left(f_2^{n+1},\varphi_h\right) +g\int_{\Gamma}\varphi_h\tilde{\hat{\u}}^{n+1}\cdot\n_f,
\end{align}
where $\varpi_{w,t}^{n+1}$ is defined by
\begin{align*}
	\varpi_{w,t}^{n+1}&=\frac{\mathcal{A}(\tilde{w}^{n+1})}{k_{n+1}}-w_t(t_{n+1}) =\left[\frac{\mathcal{A}(\tilde{w}^{n+1})}{k_{n+1}}-\frac{\mathcal{A}(w(t_{n+1}))}{k_{n+1}}\right] +\left[\frac{\mathcal{A}(w(t_{n+1}))}{k_{n+1}}-w_t(t_{n+1})\right].
\end{align*}

To establish the error estimates of the BDF2-TF algorithm, we initially present the consistency error associated with this algorithm.
\begin{lemma} \label{ab} The following inequalities hold
	\begin{align}\label{pro3}
		&\left\|\frac{\mathcal{A}(\u(t_{n+1}))}{k_{n+1}}-\u_t(t_{n+1})\right\|_f^2 \leq Ck_{n+1}^5\int_{t_{n-2}}^{t_{n+1}}\left\|\u_{tttt}(t)\right\|_f^2dt,\\
		&\left\|\frac{\mathcal{A}(\tilde{\u}^{n+1})}{k_{n+1}}-\frac{\mathcal{A}(\u(t_{n+1}))}{k_{n+1}}\right\|_f^2 \leq \frac{C}{k_{n+1}}\int_{t_{n-2}}^{t_{n+1}}\left\|(P_h^{\u}-I)\u_{t}\right\|_f^2dt,\label{pro31}\\
		&\left\|\tilde{\hat{\u}}^{n+1}-\tilde{\u}_{\sigma}^{n}\right\|_{H_f}^2\leq Ck_{n+1}^5\int_{t_{n-2}}^{t_{n+1}}\left\|\u_{ttt}\right\|_{\mathbf{H}_f}^2dt.\label{pro32}
	\end{align}
\end{lemma}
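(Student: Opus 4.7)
The strategy for all three bounds is the standard Taylor-expansion-plus-Cauchy--Schwarz argument, with the specific cancellation patterns dictated by the consistency orders built into the BDF2-TF coefficients $\beta_i,\gamma_i,\sigma_i$ of Lemma \ref{xishu}. Throughout I rely on the assumption $\tau_n,\tau_{n-1}\in(0,1.0315]$ to ensure that these coefficients are uniformly bounded.

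For (\ref{pro3}) I would expand $\u(t_n), \u(t_{n-1}), \u(t_{n-2})$ around $t_{n+1}$ via Taylor's formula with integral remainder of order four,
\begin{equation*}
	\u(t_{n-j}) = \sum_{i=0}^{3} \frac{(t_{n-j}-t_{n+1})^i}{i!}\,\partial_t^i\u(t_{n+1}) + \frac{1}{6}\int_{t_{n+1}}^{t_{n-j}}(t_{n-j}-s)^3\,\u_{tttt}(s)\,ds,
\end{equation*}
and substitute into $\mathcal{A}(\u(t_{n+1}))/k_{n+1} - \u_t(t_{n+1})$. Because the BDF2-TF combination is third-order accurate by construction, the polynomial contributions up through $\u_{ttt}(t_{n+1})$ cancel and only the $\u_{tttt}$-remainder integrals survive. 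Applying Cauchy--Schwarz to each such integral yields a factor $k_{n+1}^{7}$ (from $\int(t-s)^6\,ds$), and division by $k_{n+1}^2$ delivers the advertised $k_{n+1}^5$ weight on $\int_{t_{n-2}}^{t_{n+1}}\|\u_{tttt}\|_f^2\,dt$.

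For (\ref{pro31}) I would exploit the linearity of $\mathcal{A}$ and write
\begin{equation*}
	\mathcal{A}(\tilde{\u}^{n+1})-\mathcal{A}(\u(t_{n+1})) = \beta_3\,\delta\xi_{\u}^{n+1} - \beta_2\,\delta\xi_{\u}^{n} + \beta_1\,\delta\xi_{\u}^{n-1},
\end{equation*}
with $\xi_{\u}^j = (P_h^{\u}-I)\u(t_j)$. Commuting the time-independent projection with the time integral gives $\delta\xi_{\u}^{j} = \int_{t_{j-1}}^{t_j}(P_h^{\u}-I)\u_t(s)\,ds$, so Cauchy--Schwarz produces $\|\delta\xi_{\u}^j\|_f^2 \leq k_{n+1}\int_{t_{j-1}}^{t_j}\|(P_h^{\u}-I)\u_t\|_f^2\,ds$. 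Summing the three contributions with coefficients $\beta_i/k_{n+1}^2$ yields the claimed $1/k_{n+1}$ weight on the right-hand side.

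For (\ref{pro32}) the key observation is that, when applied to the exact solution, both the filter formula (\ref{hat}) and the extrapolation (\ref{jjmxs}) are third-order accurate approximations to $\u(t_{n+1})$. I would write $\tilde{\hat{\u}}^{n+1}-\tilde{\u}_{\sigma}^{n} = P_h^{\u}\bigl(\hat{\u}(t_{n+1})-\u_{\sigma}(t_n)\bigr)$ and invoke the $\mathbf{H}_f$-stability of the Ritz-type projection defined in (\ref{P1}); it then suffices to bound $\|\hat{\u}(t_{n+1})-\u_{\sigma}(t_n)\|_{\mathbf{H}_f}^2$. A Taylor expansion about $t_{n+1}$ to second order with remainder in $\u_{ttt}$ reveals that the $\u(t_{n+1}), \u_t(t_{n+1}), \u_{tt}(t_{n+1})$ contributions cancel between $\hat{\u}$ and $\u_{\sigma}$, so only a bounded linear combination of integrals $\int(t_{n-j}-s)^2\,\u_{ttt}(s)\,ds$ remains, and Cauchy--Schwarz delivers the advertised $k_{n+1}^5\int_{t_{n-2}}^{t_{n+1}}\|\u_{ttt}\|_{\mathbf{H}_f}^2\,dt$ bound. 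The main obstacle in this last step is the explicit algebraic verification that the two distinct third-order formulas, one centered at $n+1$ and the other built from levels $n,n-1,n-2$ with variable ratios $\tau_n\neq\tau_{n-1}$, truly share matching Taylor coefficients through $O(k^2)$; although this is forced in principle by the derivation of the time filter, the bookkeeping with the coefficients $\gamma_i,\sigma_i$ listed in Lemma \ref{xishu} demands some care.
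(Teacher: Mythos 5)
Your proposal follows essentially the same route as the paper's Appendix B proof: Taylor expansion with integral remainder about $t_{n+1}$ followed by Cauchy--Schwarz for (\ref{pro3}) and (\ref{pro32}), and, for (\ref{pro31}), commuting $(P_h^{\u}-I)$ with the linear combination $\mathcal{A}$ and writing the level-to-level increments as integrals of $(P_h^{\u}-I)\u_t$. The coefficient-cancellation bookkeeping you flag at the end is real but is handled no more explicitly in the paper's own argument, so there is no substantive gap relative to it.
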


\begin{proof} The proof is similar to the Lemma 2 in \cite{44}.\end{proof}

\begin{theorem} \label{error}
	For any $0<t_N=T<\infty$, assuming the true solution exhibits smoothness, the initial approximations are adequately precise, and that the time step coefficients fulfills
	$$\left(\sigma_3+\sigma_2+\sigma_1\right)\kappa<1,$$
	then the subsequent estimation for the error at the time increment is valid $(below\;\;\delta\e_{w}^{n}:=\e_{w}^{n}-\e_{w}^{n-1})$:
	
	\begin{align}\label{e0}
		&E_{\e}^{N}+k_{n+1}F_{\e}^{N}+\sum\limits_{n=2}^{N-1}\left(\left\|\delta\e_{\u}^{n+1}\right\|_f^2+ gS\left\|\delta\e_{\phi}^{n+1}\right\|_p^2\right) \nonumber\\
		&+\sum\limits_{n=2}^{N-1}k_{n+1}\left(\nu\left\|\delta\e_{\u}^{n+1}\right\|_{\mathbf{H}_f}^2 +g\left\|\textbf{K}^{\frac{1}{2}}\nabla\delta\e_{\phi}^{n+1}\right\|_p^2\right)\nonumber\\
		\leq&C(k_{n+1}^6+h^6).
	\end{align}
	In the present and subsequent contexts, $C$ denotes a generic positive constant that relies on the given data $(\nu,g,S,k_{\min})$, which may exhibit distinct values in its various instances.
	
\end{theorem}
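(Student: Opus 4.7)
The plan is to mirror the stability argument of the previous theorem, but now applied to the error equation rather than to the discrete solution itself. First I would split the errors through the Ritz-type projection defined in \eqref{P1}:
$\e_{\u}^{n+1}=\xi_{\u}^{n+1}+\eta_{\u}^{n+1}$, $\e_{\phi}^{n+1}=\xi_{\phi}^{n+1}+\eta_{\phi}^{n+1}$, $\e_{p}^{n+1}=\xi_{p}^{n+1}+\eta_{p}^{n+1}$. The projection parts $\xi$ are controlled a priori by the stated approximation bounds $\|\xi_{\u}\|_{L^2}^2+\|\xi_{\phi}\|_{L^2}^2\le Ch^6$ and $\|\xi_{\u}\|_{\mathbf{H}_f}^2+\|\xi_{\phi}\|_{H_p}^2\le Ch^4$, so the real work is to estimate the discrete parts $\eta_{\u}^{n+1}$, $\eta_{\phi}^{n+1}$.

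Next I would subtract the discrete scheme \eqref{s1}-\eqref{s2} from the projected weak formulation \eqref{P3}-\eqref{P31} to obtain an error equation of the form
\begin{align*}
\left(\tfrac{\mathcal{A}(\eta_{\u}^{n+1})}{k_{n+1}},\vv_h\right)+a_f(\hat{\eta}_{\u}^{n+1},\vv_h)+b(\vv_h,\hat{\eta}_p^{n+1}) &= -(\varpi_{f,t}^{n+1},\vv_h)-g\!\int_{\Gamma}(\tilde{\hat{\phi}}^{n+1}-\phi_{h,\sigma}^n)\vv_h\cdot\n_f,\\
\tfrac{gS}{k_{n+1}}\left(\mathcal{A}(\eta_{\phi}^{n+1}),\varphi_h\right)+a_p(\hat{\eta}_{\phi}^{n+1},\varphi_h) &= -gS(\varpi_{p,t}^{n+1},\varphi_h)+g\!\int_{\Gamma}\varphi_h(\tilde{\hat{\u}}^{n+1}-\u_{h,\sigma}^n)\cdot\n_f,
\end{align*}
together with $b(\eta_{\u}^{n+1},q_h)=0$. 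I would then test with $\vv_h=k_{n+1}\delta\eta_{\u}^{n+1}$ and $\varphi_h=k_{n+1}\delta\eta_{\phi}^{n+1}$ and add the two equations. The incompressibility test kills the pressure term, and Lemma \ref{xishu} converts the left-hand side into a telescoping energy $E_\eta^{n+1}-E_\eta^{n}+k_{n+1}(F_\eta^{n+1}-F_\eta^{n})$ plus strictly positive dissipation $(\beta_3-\beta_2-\beta_1)\|\delta\eta\|^2$ and $\tfrac{1+2\gamma_3-2\gamma_2-2\gamma_1}{2}k_{n+1}\|\delta\eta\|_{\mathbf{H}}^2$; this is exactly where the restriction $\tau_n\in(0,1.0315]$ enters to ensure the positivity of these coefficients.

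For the right-hand side I would invoke Lemma \ref{ab} to bound the consistency defects
$\|\varpi_{f,t}^{n+1}\|_f^2$ and $\|\varpi_{p,t}^{n+1}\|_p^2$ by $Ck_{n+1}^5\!\int\!\|\u_{tttt}\|^2dt + Ck_{n+1}^{-1}\!\int\!\|(P_h^{\u}-I)\u_t\|^2dt$, which after summation yields $O(k_{n+1}^6)+O(h^6)$ contributions. The interface term is split by writing $\tilde{\hat{\u}}^{n+1}-\u_{h,\sigma}^n=(\tilde{\hat{\u}}^{n+1}-\tilde{\u}_\sigma^{n})+\tilde{\u}_\sigma^{n}-\u_{h,\sigma}^n=(\tilde{\hat{\u}}^{n+1}-\tilde{\u}_\sigma^{n})+\xi_{\u,\sigma}^n+\eta_{\u,\sigma}^n$; the first piece is handled by \eqref{pro32}, the $\xi$ piece by the approximation properties, and the $\eta$ piece is the delicate one, to be absorbed via Lemma \ref{interface} into the dissipation with a constant that produces the stability parameter $\kappa$ in \eqref{kappa}, replicated three times for the $\sigma_3,\sigma_2,\sigma_1$ decomposition exactly as in \eqref{w6}. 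Young's inequality on $(\mathbf{f}_1,\delta\u)$-type estimates is replaced here by $\tfrac{1}{2}$-factor bounds on projected interface data.

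After summing from $n=2$ to $N-1$ and using $\eta_{\u}^0=\eta_{\u}^1=\eta_{\u}^2=0$, $\eta_{\phi}^0=\eta_{\phi}^1=\eta_{\phi}^2=0$ to kill the initial energy, the condition $(\sigma_3+\sigma_2+\sigma_1)\kappa<1$ permits applying the discrete Gronwall inequality to the accumulated $\eta$-terms, yielding
\begin{align*}
E_\eta^N+k_{n+1}F_\eta^N+\sum_{n=2}^{N-1}\!\left(\|\delta\eta_{\u}^{n+1}\|_f^2+gS\|\delta\eta_{\phi}^{n+1}\|_p^2\right)+\sum_{n=2}^{N-1}\!k_{n+1}\!\left(\nu\|\delta\eta_{\u}^{n+1}\|_{\mathbf{H}_f}^2+g\|\textbf{K}^{\frac12}\nabla\delta\eta_{\phi}^{n+1}\|_p^2\right)\le C(k_{n+1}^6+h^6).
\end{align*}
Combining with the projection bounds on $\xi$ via the triangle inequality delivers \eqref{e0}. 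The main obstacle I anticipate is the interface term: since $\u_{h,\sigma}^n$ is an extrapolation with coefficients $\sigma_3,\sigma_2,\sigma_1$ that grow with $\tau_n$, the decomposition and absorption must be done carefully to keep the effective multiplier below unity, and the three consistency estimates of Lemma \ref{ab} must be shown to produce only $O(k_{n+1}^6)$ errors rather than the one-order-lower $O(k_{n+1}^5)$ that a naive BDF2 analysis would give, which is precisely where the time-filter correction $\hat{w}^{n+1}=w^{n+1}+\gamma_3\delta w^{n+1}-\gamma_2\delta w^n+\gamma_1\delta w^{n-1}$ from \eqref{hat} elevates the local truncation order.
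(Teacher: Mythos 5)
Your proposal is correct and follows essentially the same route as the paper's proof: the $\xi/\eta$ projection splitting, testing the error equations with $k_{n+1}\delta\eta_{\u}^{n+1}$ and $k_{n+1}\delta\eta_{\phi}^{n+1}$, the telescoping identities of Lemma \ref{xishu}, the consistency bounds of Lemma \ref{ab}, the two-part interface decomposition absorbed through Lemma \ref{interface} with the $\sigma_3,\sigma_2,\sigma_1$ splitting, and a final discrete Gronwall under $(\sigma_3+\sigma_2+\sigma_1)\kappa<1$. (The only cosmetic difference is your extra $\xi_{\u,\sigma}^n$ term in the interface split, which vanishes since $\tilde{\u}_{\sigma}^n-\u_{h,\sigma}^n$ is already $\eta_{\u,\sigma}^n$ by linearity.)
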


\begin{proof}
	By subtracting equations (\ref{P3})-(\ref{P31}) from (\ref{s1})-(\ref{s2}), we obtain the subsequent error equations:
	\begin{align}\label{e1}
		&\left(\frac{\mathcal{A}(\eta_{\u}^{n+1})}{k_{n+1}},\vv_h\right) +a_f\left(\hat{\eta}_{\u}^{n+1},\vv_h\right) +b\left(\vv_h,\hat{\eta}_{p}^{n+1}\right)\nonumber\\
		&=-\left(\varpi_{f,t}^{n+1},\vv_h\right) -g\int_{\Gamma}\left(\tilde{\hat{\phi}}^{n+1}-\tilde{\phi}_{\sigma}^n\right)\vv_h\cdot\n_f -g\int_{\Gamma}\left(\tilde{\phi}_{\sigma}^n-\phi_{h,\sigma}^n\right)\vv_h\cdot\n_f,\nonumber\\
		&b\left(\delta\eta_{\u}^{n+1},q_h\right)=0,\\
		&gS\left(\frac{\mathcal{A}(\eta_{\phi}^{n+1})}{k_{n+1}},\varphi_h\right) +a_p\left(\hat{\eta}_{\phi}^{n+1},\vv_h\right)\nonumber\\
		&=-gS\left(\varpi_{p,t}^{n+1},\varphi_h\right) +g\int_{\Gamma}\varphi_h\left(\tilde{\hat{\u}}^{n+1}-\tilde{\u}_{\sigma}^n\right)\cdot\n_f +g\int_{\Gamma}\varphi_h\left(\tilde{\u}_{\sigma}^n-\u_{h,\sigma}^n\right)\cdot\n_f.\label{De}
	\end{align}
	Taking $\vv_h=k_{n+1}\delta\eta_{\u}^{n+1}$ and $\varphi_h=k_{n+1}\delta\eta_{\phi}^{n+1}$ in (\ref{e1}) and (\ref{De}), respectively, and combining them together, we obtain
	\begin{align}\label{e2}
		&\left(\beta_3-\beta_2-\beta_1\right)\left\|\delta\eta_{\u}^{n+1}\right\|_f^2 +gS\left(\beta_3-\beta_2-\beta_1\right)\left\|\delta\eta_{\phi}^{n+1}\right\|_p^2\nonumber\\
		&+\frac{\beta_1+\beta_2}{2}\left(\left\|\delta\eta_{\u}^{n+1}\right\|_f^2 -\left\|\delta\eta_{\u}^{n}\right\|_f^2\right) +\frac{\beta_1}{2}\left(\left\|\delta\eta_{\u}^{n}\right\|_f^2 -\left\|\delta\eta_{\u}^{n-1}\right\|_f^2\right)\displaybreak[1]\nonumber\\
		&+\frac{gS\left(\beta_1+\beta_2\right)}{2}\left(\left\|\delta\eta_{\phi}^{n+1}\right\|_p^2 -\left\|\delta\eta_{\phi}^{n}\right\|_p^2\right) +\frac{gS\beta_1}{2}\left(\left\|\delta\eta_{\phi}^{n}\right\|_p^2 -\left\|\delta\eta_{\phi}^{n-1}\right\|_p^2\right)\nonumber\\
		&+\frac{\nu k_{n+1}}{2}\left\|\eta_{\u}^{n+1}\right\|_{\mathbf{H}_f}^2 -\frac{\nu k_{n+1}}{2}\left\|\eta_{\u}^{n}\right\|_{\mathbf{H}_f}^2 +\frac{gk_{n+1}}{2}\left\|\textbf{K}^{\frac{1}{2}}\nabla\eta_{\phi}^{n+1}\right\|_p^2 -\frac{gk_{n+1}}{2}\left\|\textbf{K}^{\frac{1}{2}}\nabla\eta_{\phi}^{n}\right\|_p^2\nonumber\\
		&+\frac{ k_{n+1}\left(1+2\gamma_3-2\gamma_2-2\gamma_1\right)}{2}\left(\nu\left\|\delta\eta_{\u}^{n+1}\right\|_{\mathbf{H}_f}^2 +g \left\|\textbf{K}^{\frac{1}{2}}\nabla\delta\eta_{\phi}^{n+1}\right\|_p^2\right)\nonumber\\
		&+\frac{ k_{n+1}\left(\gamma_1+\gamma_2\right)}{2}\left(\nu\left(\left\|\delta\eta_{\u}^{n+1}\right\|_{\mathbf{H}_f}^2 -\left\|\delta\eta_{\u}^{n}\right\|_{\mathbf{H}_f}^2\right)+g\left(\left\|\textbf{K}^{\frac{1}{2}}\nabla\delta\eta_{\phi}^{n+1}\right\|_p^2 -\left\|\textbf{K}^{\frac{1}{2}}\nabla\delta\eta_{\phi}^{n}\right\|_p^2\right)\right)\nonumber\\ 
		&+\frac{ k_{n+1}\gamma_1}{2}\left(\nu\left(\left\|\delta\eta_{\u}^{n}\right\|_{\mathbf{H}_f}^2-\left\|\delta\eta_{\u}^{n-1}\right\|_{\mathbf{H}_f}^2\right) +g \left(\left\|\textbf{K}^{\frac{1}{2}}\nabla\delta\eta_{\phi}^{n}\right\|_p^2 -\left\|\textbf{K}^{\frac{1}{2}}\nabla\delta\eta_{\phi}^{n-1}\right\|_p^2\right)\right)\nonumber\\
		=&-k_{n+1}\left(\varpi_{f,t}^{n+1},\delta\eta_{\u}^{n+1}\right) +k_{n+1}c_{\Gamma}\left(\delta\eta_{\u}^{n+1},\delta\eta_{\phi}^{n+1};\tilde{\hat{\u}}^{n+1}-\tilde{\u}_{\sigma}^n,\tilde{\hat{\phi}}^{n+1}-\tilde{\phi}_{\sigma}^n\right)\nonumber\\
		&-gSk_{n+1}\left(\varpi_{p,t}^{n+1},\delta\eta_{\phi}^{n+1}\right) +k_{n+1}c_{\Gamma}\left(\delta\eta_{\u}^{n+1},\delta\eta_{\phi}^{n+1};\tilde{\u}_{\sigma}^{n+1}-\u_{h,\sigma}^n,\tilde{\phi}_{\sigma}^{n+1}-\phi_{h,\sigma}^n\right).
	\end{align}
	
	The error estimates $E_{\eta}^{n+1}$ and $F_{\eta}^{n+1}$ are utilized, along with equations (\ref{sjx}) and (\ref{sxx}), yielding the following result
	\begin{align}\label{e3}
		&E_{\eta}^{n+1}-E_{\eta}^{n}+\left(\beta_3-\beta_2-\beta_1\right)\left\|\delta\eta_{\u}^{n+1}\right\|_f^2 +gS\left(\beta_3-\beta_2-\beta_1\right)\left\|\delta\eta_{\phi}^{n+1}\right\|_p^2\nonumber\\
		&+k_{n+1}F_{\eta}^{n+1}-k_{n+1}F_{\eta}^{n} +\frac{ k_{n+1}\left(1+2\gamma_3-2\gamma_2-2\gamma_1\right)}{2}\left(\nu\left\|\delta\eta_{\u}^{n+1}\right\|_{\mathbf{H}_f}^2 +g\left\|\textbf{K}^{\frac{1}{2}}\nabla\delta\eta_{\phi}^{n+1}\right\|_p^2\right)\nonumber\\
		=&-k_{n+1}\left(\varpi_{f,t}^{n+1},\delta\eta_{\u}^{n+1}\right) +k_{n+1}c_{\Gamma}\left(\delta\eta_{\u}^{n+1},\delta\eta_{\phi}^{n+1};\hat{\tilde{\u}}^{n+1}-\tilde{\u}_{\sigma}^n,\hat{\tilde{\phi}}^{n+1}-\tilde{\phi}_{\sigma}^n\right)\nonumber\\ &-gSk_{n+1}\left(\varpi_{p,t}^{n+1},\delta\eta_{\phi}^{n+1}\right) +k_{n+1}c_{\Gamma}\left(\delta\eta_{\u}^{n+1},\delta\eta_{\phi}^{n+1};\tilde{\u}_{\sigma}^{n}-\u_{h,\sigma}^n,\tilde{\phi}_{\sigma}^{n}-\phi_{h,\sigma}^n\right).
	\end{align}
	Terms about $\varpi_{f,t}^{n+1}$ and $\varpi_{p,t}^{n+1}$ of the Right-hand term in (\ref{e3}) is bounded by Young's, Poincar$\acute{e}$'s, and H$\ddot{o}$lder's inequalities,
	\begin{align}\label{e4}
		&-\left(\varpi_{f,t}^{n+1},\delta\eta_{\u}^{n+1}\right) -gS\left(\varpi_{p,t}^{n+1},\delta\eta_{\phi}^{n+1}\right)\nonumber\\
		\leq&\frac{\nu}{40}\left\|\delta\eta_{\u}^{n+1}\right\|_{\mathbf{H}_f}^2 +\frac{g}{40}\left\|\textbf{K}^{\frac{1}{2}}\nabla\delta\eta_{\phi}^{n+1}\right\|_{p}^2 +\frac{10C_p^2}{\nu}\left\|\varpi_{f,t}^{n+1}\right\|_{f}^2 +\frac{10g\tilde{C}_p^2S^2}{k_{\min}}\left\|\varpi_{p,t}^{n+1}\right\|_{p}^2.
	\end{align}
	Using the identical technique employed in (\ref{lemma1}) and using (\ref{pro32}), we can address the first line of the interface term on the right hand side of (\ref{e3})
	\begin{align}\label{e5}
		&c_{\Gamma}\left(\delta\eta_{\u}^{n+1},\delta\eta_{\phi}^{n+1};\tilde{\hat{\u}}^{n+1}-\tilde{\u}_{\sigma}^n,\tilde\hat{{\phi}}^{n+1}-\tilde{\phi}_{\sigma}^n\right)\nonumber\\
		\leq&\frac{\nu}{40}\left\|\delta\eta_{\u}^{n+1}\right\|_{\mathbf{H}_f}^2 +\frac{10gC_1C_2}{k_{\min}}\left\|\tilde{\hat{\u}}^{n+1}-\tilde{\u}_{\sigma}^n\right\|_{\mathbf{H}_f}^2 +\frac{g }{40}\left\|\textbf{K}^{\frac{1}{2}}\nabla\delta\eta_{\phi}^{n+1}\right\|_{p}^2 +\frac{10g^2C_1C_2}{\nu}\left\|\tilde{\hat{\phi}}^{n+1}-\tilde{\phi}_{\sigma}^n\right\|_{H_p}^2
		\nonumber\\
		\leq&\frac{\nu}{40}\left\|\delta\eta_{\u}^{n+1}\right\|_{\mathbf{H}_f}^2 +\frac{g }{40}\left\|\textbf{K}^{\frac{1}{2}}\nabla\delta\eta_{\phi}^{n+1}\right\|_{p}^2
		+Ck_{n+1}^5 \int_{t_{n-2}}^{t_{n+1}}\left(\left\|\u_{ttt}\right\|_{\mathbf{H}_f}^2 +\left\|\phi_{ttt}\right\|_{H_p}^2\right).
	\end{align}
	The second line of the interface term on right-hand side of (\ref{e3}) is bounded by inequality (\ref{lemma2})
	\begin{align}\label{e6}
		&c_{\Gamma}\left(\delta\eta_{\u}^{n+1},\delta\eta_{\phi}^{n+1}; \tilde{\u}_{\sigma}^{n}-\u_{h,\sigma}^n,\tilde{\phi}_{\sigma}^{n}-\phi_{h,\sigma}^n\right)
		=c_{\Gamma}\left(\delta\eta_{\u}^{n+1},\delta\eta_{\phi}^{n+1}; \eta_{\u}^{n},\eta_{\phi}^{n}\right)\nonumber\\
		\leq&\frac{\sigma_3+\sigma_2+\sigma_1}{40}\left(\nu\left\|\delta\eta_{\u}^{n+1}\right\|_{\mathbf{H}_f}^2 +g\left\|\textbf{K}^{\frac{1}{2}}\nabla\delta\eta_{\phi}^{n+1}\right\|_{p}^2\right)\nonumber\\
		&+\frac{gC_1\left(\sigma_3+\sigma_2+\sigma_1\right)}{10\sqrt{\nu Sk_{\min}}}\left(\left\|\delta\eta_{\u}^{n+1}\right\|_{f}^2 +gS\left\|\delta\eta_{\phi}^{n+1}\right\|_{p}^2\right)\nonumber\\
		&+\frac{\sigma_3}{40}\left(\nu\left\|\eta_{\u}^{n}\right\|_{\mathbf{H}_f}^2 +g\left\|\textbf{K}^{\frac{1}{2}}\nabla\eta_{\phi}^{n}\right\|_{p}^2\right) +\frac{gC_1 \sigma_3}{10\sqrt{\nu Sk_{\min}}}\left(\left\|\eta_{\u}^{n}\right\|_{f}^2+gS\left\|\eta_{\phi}^{n}\right\|_{f}^2\right)\nonumber\\
		&+\frac{\sigma_2}{40}\left(\nu\left\|\delta\eta_{\u}^{n}\right\|_{\mathbf{H}_f}^2 +g\left\|\textbf{K}^{\frac{1}{2}}\nabla\delta\eta_{\phi}^{n}\right\|_{p}^2\right) +\frac{gC_1\sigma_2}{10\sqrt{\nu Sk_{\min}}}\left(\left\|\delta\eta_{\u}^{n}\right\|_{f}^2 +gS\left\|\delta\eta_{\phi}^{n}\right\|_{p}^2\right)\nonumber\\
		&+\frac{\sigma_1}{40}\left(\nu\left\|\delta\eta_{\u}^{n-1}\right\|_{\mathbf{H}_f}^2 +g\left\|\textbf{K}^{\frac{1}{2}}\nabla\delta\eta_{\phi}^{n-1}\right\|_{p}^2\right) +\frac{gC_1\sigma_1}{10\sqrt{\nu Sk_{\min}}}\left(\left\|\delta\eta_{\u}^{n-1}\right\|_{f}^2 +gS\left\|\delta\eta_{\phi}^{n-1}\right\|_{f}^2\right).
	\end{align}
	
	Combine the aforementioned bounds, incorporate the initial data, sum from $n=2,\ldots,N-1$ and combine (\ref{e2})-(\ref{e6}), we can yield
	\begin{align}\label{e7}
		&E_{\eta}^{N}+k_{n+1}F_{\eta}^{N}+\left(\beta_3-\beta_2-\beta_1\right)\sum\limits_{n=2}^{N-1}\left\|\delta\eta_{\u}^{n+1}\right\|_f^2 +gS\left(\beta_3-\beta_2-\beta_1\right)\sum\limits_{n=2}^{N-1}\left\|\delta\eta_{\phi}^{n+1}\right\|_p^2\nonumber\\
		&+\frac{\left(18+20\gamma_3-20\gamma_2-20\gamma_1-\sigma_3-\sigma_2-\sigma_1\right)}{40}\sum\limits_{n=2}^{N-1}k_{n+1}\left(\nu\left\|\delta\eta_{\u}^{n+1}\right\|_{\mathbf{H}_f}^2 +g\left\|\textbf{K}^{\frac{1}{2}}\nabla\delta\eta_{\phi}^{n+1}\right\|_p^2\right)\nonumber\\
		\leq&\frac{10C_p^2}{\nu}\sum\limits_{n=2}^{N-1}k_{n+1}\left\|\varpi_{f,t}^{n+1}\right\|_{f}^2 +\frac{gC_1\left(\sigma_3+\sigma_2+\sigma_1\right)}{10\sqrt{\nu Sk_{\min}}}\sum\limits_{n=2}^{N-1}k_{n+1}\left(\left\|\delta\eta_{\u}^{n+1}\right\|_{f}^2 +gS\left\|\delta\eta_{\phi}^{n+1}\right\|_{p}^2\right)\nonumber\\ &+\frac{10g\tilde{C}_p^2S^2}{k_{\min}}\sum\limits_{n=2}^{N-1}k_{n+1}\left\|\varpi_{p,t}^{n+1}\right\|_{p}^2 +Ck_{n+1}^6 \int_{t_{n-2}}^{t_{n+1}}\left(\left\|\u_{ttt}\right\|_{\mathbf{H}_f}^2 +\left\|\phi_{ttt}\right\|_{H_p}^2\right)\nonumber\\
		&+\frac{gC_1 }{10\sqrt{\nu Sk_{\min}}}\sum\limits_{n=2}^{N-1}k_{n+1}\left(\sigma_3\left(\left\|\eta_{\u}^{n}\right\|_{f}^2+gS\left\|\eta_{\phi}^{n}\right\|_{f}^2\right) +\sigma_2\left(\left\|\delta\eta_{\u}^{n}\right\|_{f}^2 +gS\left\|\delta\eta_{\phi}^{n}\right\|_{p}^2\right)\right)\nonumber\\
		&+\frac{gC_1\sigma_1}{10\sqrt{\nu Sk_{\min}}}\sum\limits_{n=2}^{N-1}k_{n+1}\left(\left\|\delta\eta_{\u}^{n-1}\right\|_{f}^2 +gS\left\|\delta\eta_{\phi}^{n-1}\right\|_{f}^2\right).
	\end{align}
	Using inequalities (\ref{pro3}) and (\ref{pro31}) of Lemma \ref{ab}, the $\varpi_{f,t}^{n+1}$ and $\varpi_{p,t}^{n+1}$ in (\ref{e7}) is bounded by
	\begin{align}\label{e31}
		&\frac{10C_p^2}{\nu}\sum\limits_{n=2}^{N-1}\left\|\varpi_{f,t}^{n+1}\right\|_{f}^2 \leq C\int_{t_{n-2}}^{t_{n+1}}\left\|(P_h^{\u}-I)\u_t\right\|_f^2dt +Ck_{n+1}^5\int^{t_{n+1}}_{t_{n-2}}\u_{tttt}^2dt
		\leq C(h^6+k_{n+1}^5).
	\end{align}
	Similarly,
	\begin{align}\label{e32}
		&\frac{10g\tilde{C}_p^2S^2}{k_{\min}}\sum\limits_{n=2}^{N-1}\left\|\varpi_{p,t}^{n+1}\right\|_{p}^2 \leq C\int_{t_{n-2}}^{t_{n+1}}\left\|(P_h^{\phi}-I)\phi_t\right\|_p^2dt +Ck_{n+1}^5\int^{t_{n+1}}_{t_{n-2}}\phi_{tttt}^2dt
		\leq C(h^6+k_{n+1}^5).
	\end{align}
	
	Apply the discrete Gronwall inequality, we obtain the ultimate outcome
	\begin{align}\label{e8}
		&E_{\eta}^{N}+k_{n+1}F_{\eta}^{N}+\left(\beta_3-\beta_2-\beta_1\right)\sum\limits_{n=2}^{N-1}\left\|\delta\eta_{\u}^{n+1}\right\|_f^2 +gS\left(\beta_3-\beta_2-\beta_1\right)\sum\limits_{n=2}^{N-1}\left\|\delta\eta_{\phi}^{n+1}\right\|_p^2\nonumber\\
		&+\frac{\left(18+40\gamma_3-40\gamma_2-40\gamma_1-\sigma_3-\sigma_2-\sigma_1\right)}{40}\sum\limits_{n=2}^{N-1}k_{n+1}\left(\nu\left\|\delta\eta_{\u}^{n+1}\right\|_{\mathbf{H}_f}^2 +g\left\|\textbf{K}^{\frac{1}{2}}\nabla\delta\eta_{\phi}^{n+1}\right\|_p^2\right)\nonumber\\
		\leq&CC(T)(k_{n+1}^6+h^6),
	\end{align}
	with $C(T)\approx\exp\left(\sum\limits_{n=2}^{N-1}k_{n+1}\frac{\kappa}{1-\kappa \left(\sigma_3+\sigma_2+\sigma_1\right)}\right)$, we've done the proof.
\end{proof}

\begin{theorem} The error between the approximate solution $\left\{\u_{h}^{n+1},\phi_{h}^{n+1}\right\}_{n=2}^{N}$ and the exact solution of the variable time step BDF2-TF algorithm satisfies
	\begin{align}\label{th3}
		&\left\|\u^{n+1}-\u_h^{n+1}\right\|_f+\left\|\phi^{n+1}-\phi_h^{n+1}\right\|_p\leq Ch^3+\max\limits_{2\leq n\leq N}k_{n+1}^3.
	\end{align}
\end{theorem}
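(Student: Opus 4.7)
The plan is to derive the stated $L^2$ convergence bound by combining the projection-based error splittings $\e_\u^{n+1}=\xi_\u^{n+1}+\eta_\u^{n+1}$ and $\e_\phi^{n+1}=\xi_\phi^{n+1}+\eta_\phi^{n+1}$ already introduced before Theorem \ref{error} with the triangle inequality, bounding the projection and discrete components separately.

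For the projection component, I would invoke the standard $L^2$ approximation properties of the Stokes-Darcy projection $P_h^\Phi$ associated with the continuous piecewise cubic ($P_3$) velocity and hydraulic-head spaces. Under the assumed regularity $(\u(t),\phi(t))\in(\mathbf{H}^3(\Omega_f)^d,H^3(\Omega_p))$, these properties yield the optimal bounds $\|\xi_\u^{n+1}\|_f\le Ch^3$ and $\|\xi_\phi^{n+1}\|_p\le Ch^3$, which are exactly the estimates noted just after the introduction of the error functions in the excerpt.

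For the discrete component, I would invoke Theorem \ref{error}. Because the assumed initial agreement $\eta_\u^0=\eta_\u^1=\eta_\u^2=0$ (and the analogous identities for $\phi$) implies the telescoping representation $\eta_\u^{n+1}=\sum_{j=2}^{n}\delta\eta_\u^{j+1}$, combining this identity with the summed squared-difference bound $\sum_{n=2}^{N-1}\|\delta\e_\u^{n+1}\|_f^2\le C(k_{n+1}^6+h^6)$ delivered by Theorem \ref{error}, together with the triangle and discrete Cauchy-Schwarz inequalities, produces the pointwise-in-time control $\|\eta_\u^{n+1}\|_f+\|\eta_\phi^{n+1}\|_p\le Ch^3+C\max_{2\le n\le N}k_{n+1}^3$. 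A final application of the triangle inequality to $\e_\u^{n+1}=\xi_\u^{n+1}+\eta_\u^{n+1}$ and $\e_\phi^{n+1}=\xi_\phi^{n+1}+\eta_\phi^{n+1}$ then yields the claimed estimate.

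The main technical obstacle is ensuring that the passage from the summed squared-difference bound of Theorem \ref{error} to a pointwise $L^2$ bound on $\eta_\u^{n+1}$ and $\eta_\phi^{n+1}$ does not degrade the third-order scaling in $k_{n+1}$. This requires careful accounting of the coefficients $\beta_1,\beta_2,\beta_3$ appearing in $E^{n+1}$ together with the smallness of $k_{n+1}$ and the near-uniform ratio assumption $\tau_n\approx\tau_{n-1}$, both of which are inherited from the stability and error analyses that precede the theorem.
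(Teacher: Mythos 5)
Your overall route is exactly the paper's: the paper's proof of (\ref{th3}) is a two-line application of the triangle inequality to $\e=\xi+\eta$, the $O(h^3)$ approximation property of the projection $P_h^{\Phi}$ for the $\xi$-part, and Theorem \ref{error} for the $\eta$-part. So the decomposition and the two ingredients you cite are the right ones and coincide with what the paper does.

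The gap is in the step you yourself flag as the ``main technical obstacle,'' and the resolution you sketch does not close it. Theorem \ref{error} controls only \emph{differences} of the discrete error in $L^2$ (the quantities $E_{\eta}^{N}$ and $\sum_{n=2}^{N-1}\|\delta\eta_{\u}^{n+1}\|_f^2$), together with a $k_{n+1}$-weighted $H^1$ norm of $\eta^{N}$ inside $k_{n+1}F_{\eta}^{N}$; it does not bound $\|\eta_{\u}^{n+1}\|_f$ itself. Your telescoping identity $\eta_{\u}^{n+1}=\sum_{j=2}^{n}\delta\eta_{\u}^{j+1}$ combined with Cauchy--Schwarz against $\sum_{n=2}^{N-1}\|\delta\eta_{\u}^{n+1}\|_f^2\leq C(k_{n+1}^6+h^6)$ yields only $\|\eta_{\u}^{n+1}\|_f\leq\sqrt{N}\,C(k_{n+1}^3+h^3)$, and since $N\sim T/k_{\min}$ this is $O(k^{5/2}+h^3k^{-1/2})$, not $O(k^3+h^3)$; no bookkeeping of the coefficients $\beta_1,\beta_2,\beta_3$ or of the ratio condition $\tau_n\approx\tau_{n-1}$ removes the $\sqrt{N}$ factor, because the loss comes from converting an $\ell^2$-in-time bound on increments into an $\ell^\infty$-in-time bound on the accumulated sum. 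To obtain a genuinely pointwise third-order $L^2$ estimate one needs an energy inequality in which $\|\eta_{\u}^{n+1}\|_f^2$ itself sits on the left-hand side, e.g.\ by testing the error equations with $\eta_{\u}^{n+1}$ (or $\hat{\eta}_{\u}^{n+1}$) rather than with $k_{n+1}\delta\eta_{\u}^{n+1}$. In fairness, the paper's own justification (``by (\ref{P1}), linearity of $P_h^{\Phi}$ and Theorem \ref{error}'') is silent on exactly this passage, so the deficiency is shared with the source; but as written your argument does not deliver (\ref{th3}).
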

\begin{proof} Because
	\begin{align}\label{e}
		&\left\|\e\right\|_{2,0}\leq\left\|\eta\right\|_{2,0}+\left\|\xi\right\|_{2,0},
	\end{align}
	by (\ref{P1}), linearity of the projection operator $P_{h}^{\Phi}$ and Theorem \ref{error}, we have (\ref{th3}).
\end{proof}

\section{Extension to adaptive algorithms}
\label{sec4:methods}

In the section, adaptive BDF2 algorithm and adaptive BDF3 algorithm are given for the purpose of comparing with the experiments in Section \ref{sec5:experiments}.

\begin{longtable}{l}
	\hline
	\textbf{Algorithm 3: Adaptive BDF2 algorithm}\\
	Let $n=2$. Given $\epsilon$, $\hat{\gamma}$, $\check{\gamma}$, $(\u_h^{n-2},\u_h^{n-1},\u_h^{n})$, $\left(p_h^{n-2},p_h^{n-1}\right.$, $\left.p_h^{n}\right)$, and $(\phi_h^{n-2},\phi_h^{n-1},\phi_h^{n})$, compute\\ $(\u_h^{n+1},p_h^{n+1},\phi_h^{n+1})$ by solving\\
	\\
	\qquad\quad$\frac{1}{k_{n+1}}\left(\frac{1+2\tau_n}{1+\tau_n}\u_{h}^{n+1}-\left(1+\tau_n\right)\u_h^n +\frac{\tau_{n}^2}{1+\tau_n}\u_{h}^{n-1},\vv_h\right) +a_f\left(\u_h^{n+1},\vv_h\right)+b\left(\vv_h,p_h^{n+1}\right)$\\ \qquad$=\left(F_1^{n+1},\vv_h\right)-g\int_{\Gamma}\left((1+\tau_n)\phi_h^{n}-\tau_{n}\phi_h^{n-1}\right)\vv_h\cdot\n_f,$\\
	\qquad$b(\u_h^{n+1},q_h)=0,$\\
	\qquad\quad$\frac{gS}{k_{n+1}}\left(\frac{1+2\tau_n}{1+\tau_n}\phi_{h}^{n+1}-\left(1+\tau_n\right)\phi_h^n +\frac{\tau_{n}^2}{1+\tau_n}\phi_{h}^{n-1},\vv_h\right) +a_p\left(\phi_h^{n+1},\varphi_h\right)$\\
	\qquad$=g\left(F_2^{n+1},\varphi_h\right)+g\int_{\Gamma}\varphi_h\left((1+\tau_n)\u_h^{n}-\tau_{n}\u_h^{n-1}\right)\cdot\n_f.$\\
	\\
	\qquad Compute the error estimators:\\
	\qquad\qquad\qquad\qquad\qquad\qquad\qquad$Est_{\u}\leftarrow\eta^3 \varrho^3 \hat{\u}_h^{n+1},$ \quad $Est_{\phi}\leftarrow\eta^3 \varrho^3 \hat{\phi}_h^{n+1}$,\\
	\hline
	\textbf{Algorithm 4: Adaptive BDF3 algorithm}\\
	The fully discrete approximation of adaptive BDF3 algorithm is: Give $\epsilon$, $\hat{\gamma}$, $\check{\gamma}$, $\left(\u_h^0,p_h^0,\phi_h^0\right)$,\\ $\left(\u_h^1,p_h^1,\phi_h^1\right)$, and $\left(\u_h^2,p_h^2,\phi_h^2\right)$, find $\left(\u_h^{n+1},p_h^{n+1},\phi_h^{n+1}\right)\in \left(\mathbf{H}_{fh},Q_h,H_{ph}\right)$, with $n=2,\ldots,N-1$,\\ such that for any $\vv_h\in \mathbf{H}_{fh}$, $q_h\in Q_h$ and $\varphi_h\in H_{ph}$\\
	\\
	\qquad$\left(\frac{\mathcal{A}(\u_h^{n+1})}{k_{n+1}},\vv_h\right) +a_f\left(\u_h^{n+1},\vv_h\right)+b\left(\vv_h,p_h^{n+1}\right) =\left(F_1^{n+1},\vv_h\right)-g\int_{\Gamma}\phi_{h,\sigma}^n\vv_h\cdot\n_f,$\\
	\qquad$b(\u_h^{n+1},q_h)=0,$\\
	\qquad$gS\left(\frac{\mathcal{A}(\phi_h^{n+1})}{k_{n+1}},\varphi_h\right) +a_p\left(\phi_h^{n+1},\varphi_h\right)
	=g\left(F_2^{n+1},\varphi_h\right)+g\int_{\Gamma}\varphi_h\u_{h,\sigma}^n\cdot\n_f.$\\
	\\
	\qquad Compute the error estimators:\\
	\qquad\qquad\qquad\qquad\qquad\qquad\qquad$Est_{\u}\leftarrow\eta^4 \varrho^4 \hat{\u}_h^{n+1},$ \quad $Est_{\phi}\leftarrow\eta^4 \varrho^4 \hat{\phi}_h^{n+1}$,\\
	\hline
	\textbf{Remark:} The adaptive change process of Algorithm 3 and 4 follow the prescribed format.\\
	If $\max \left\{\left|Est_{\u}\right|,\left|Est_{\phi}\right|\right\} <\frac{\epsilon}{4},$ \\
	\qquad\qquad\qquad\qquad$\vartheta_{n+1}=\min \left\{2,\left(\frac{\epsilon}{\left|Est_{\u}\right|}\right)^{\frac{1}{3}}, \left(\frac{\epsilon}{\left|E s t_{\phi}\right|}\right)^{\frac{1}{3}}\right\},$ \qquad$k_{n+1}=\hat{\gamma} \cdot \vartheta_{n+1} \cdot k_n ,$ \\
	if $\frac{\epsilon}{4} \leq \min \left\{\left|E s t_{\u}\right|,\left|E s t_\phi\right|\right\}\leq\epsilon,$ \\
	\qquad\qquad\qquad\qquad$ \vartheta_{n+1}=\min \left\{1,\left(\frac{\epsilon}{\left|E s t_{\u}\right|}\right)^{\frac{1}{3}},\left(\frac{\epsilon}{\left|E s t_\phi\right|}\right)^{\frac{1}{3}}\right\},$ \qquad$k_{n+1}=\hat{\gamma} \cdot \vartheta_{n+1} \cdot k_n .$\\
	If none satisfy the tolerance, set\\
	\qquad\qquad\qquad\qquad\qquad\qquad\qquad\qquad\qquad\qquad $k_{n}=k_n/\hat{\gamma}\cdot\check{\gamma},$\\
	and recompute the above steps.\\
	\hline
\end{longtable}

\section{Numerical tests}
\label{sec5:experiments}

In this section, numerical tests are carried out to further validate the results of theoretical analysis. The first experiment aims to demonstrate that BDF2-TF algorithm can effectively simulate fluid flow under different true solutions and arbitrary time step sequences. Subsequently, the second experiment demonstrates that the convergence order of BDF2-TF is third-order, aligning with the theoretical findings. Additionally, it also shows the high efficiency of time filter method. a simplified model is employed to simulate the process of shale oil extraction from reservoirs, further demonstrating the algorithm's practical applicability in the last experiment.

For the 2D experiments, we utilize continuous piecewise cubic finite element (P3) for fluid velocity $\u$ and hydraulic head $\phi$, whereas continuous piecewise quadratic functions (P2) represent pressure $p$. In 3D experiments, we employ continuous piecewise quadratic finite element (P23d) and continuous piecewise linear finite element (P13d) to represent the Stokes and Darcy region, respectively. The codes are implemented using the FreeFEM++ software package \cite{41}.

\begin{re} In the table, we define global error as
	$Err_{\Phi}=\left(\sum\limits_{i=3}^Nk_i\frac{\left\|\Phi(t_i)-\Phi_{h}^i\right\|_f^2}{\left\|\Phi(t_i)\right\|_f^2}\right)^{\frac{1}{2}}$, and $\rho_{\Phi},$ denote the convergence order of $\Phi$.
\end{re}

\subsection{3D convection in a cubual cavity}

To demonstrate the effectiveness of the proposed algorithm, we employed benchmark tests as suggested in \cite{42}. Specifically, we present a study on 3D natural convections within a cubical cavity. This section will employ two distinct true solutions to simulate velocity streamlines and showcase the computational performance of various algorithms in three dimensions.

All physical parameters, including density $\rho$, gravity $g$, kinematic viscosity $\nu$, hydraulic conductivity $\textbf{K}$, source term coefficient $S$ and thermal expansion coefficient $\alpha$, are set to 1, while the initial conditions, boundary conditions, and source terms are derived from the exact solution. For this experiment, we consider the BDF2-TF algorithm for the Stokes-Darcy model. In order to observe the impact of variable time-stepping on the results, we set the grid size to $h=\frac{1}{7}$. We apply the BDF2-TF algorithm to this test problem for 40 time steps and refer to the
time step size $k_{n_1}$, $k_{n_2}$, and $k_{n_3}$ similar to that in \cite{6}:
\begin{align*}
	&k_{n_1}=0.025+0.0125t_n,\\
	&k_{n_2}=\left\{\begin{array}{cc}
		0.025 & 3\leq n\leq10,\\
		0.025+0.0125\sin(10t_n) & n>10,
	\end{array}\right.\\
	&k_{n_3}=0.025-0.0125t_n.
\end{align*}

\subsubsection{Test 1}
\label{sec6.11}

Assuming the computational domain is $\Omega_f=[0,1]\times [1,2]\times [0,1]$ and $\Omega_p=[0,1]^3$, with the prescribed value of $\u$ at the interface $\Gamma$ being set to $1.0$. Both the initial value and the external force term are initialized as $0$. We consider three cases: BDF2-TF with variable time steps $k_{n_1}$, $k_{n_2}$ and $k_{n_3}$.

\begin{figure}[hbpt!]
	\centering
	\subfloat[$k_{n_1}$]
	{
		\begin{minipage}[hbpt!]{3.2cm}
			\centering
			\includegraphics[width=3.2cm,height=3.2cm]{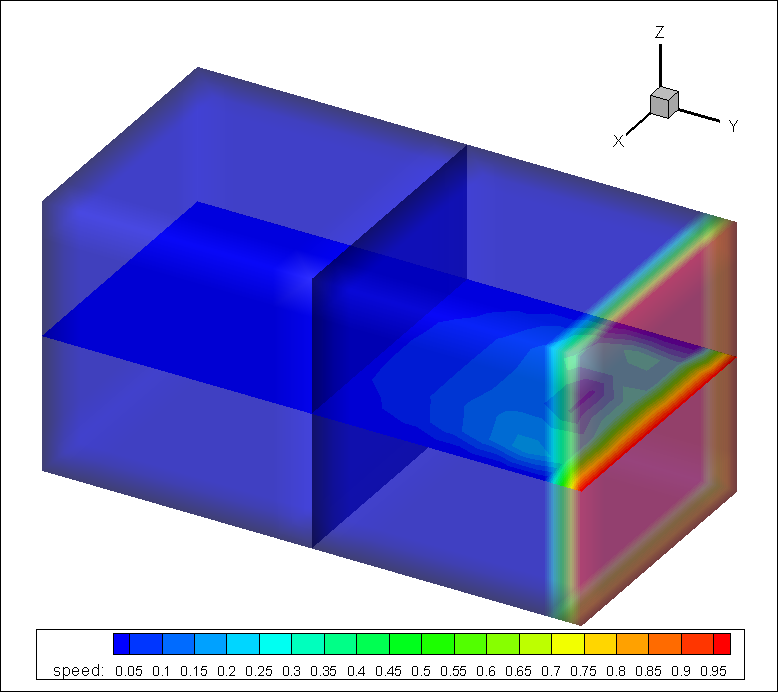}
		\end{minipage}
	}
	\subfloat[$k_{n_2}$]
	{
		\begin{minipage}[hbpt!]{3.2cm}
			\centering
			\includegraphics[width=3.2cm,height=3.2cm]{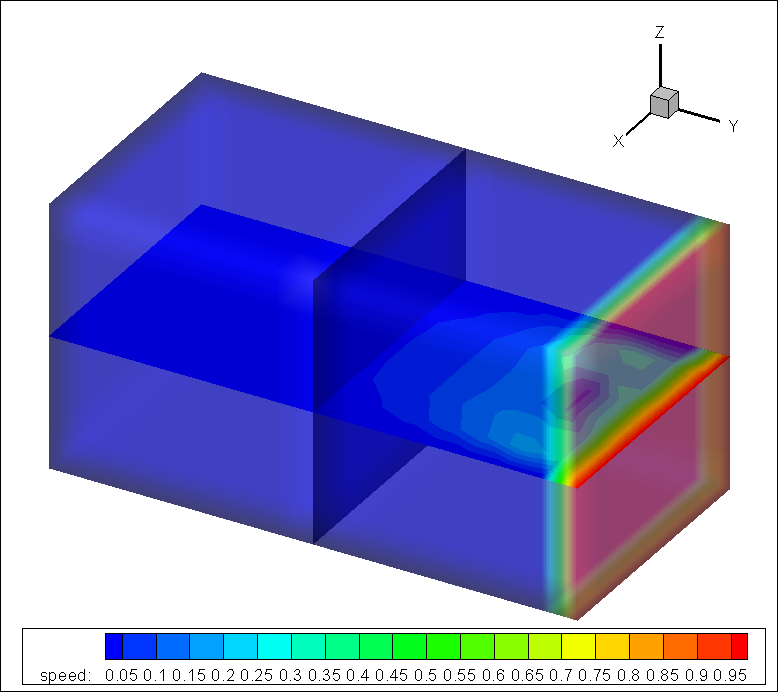}
		\end{minipage}
	}
	\subfloat[$k_{n_3}$]
	{
		\begin{minipage}[hbpt!]{3.2cm}
			\centering
			\includegraphics[width=3.2cm,height=3.2cm]{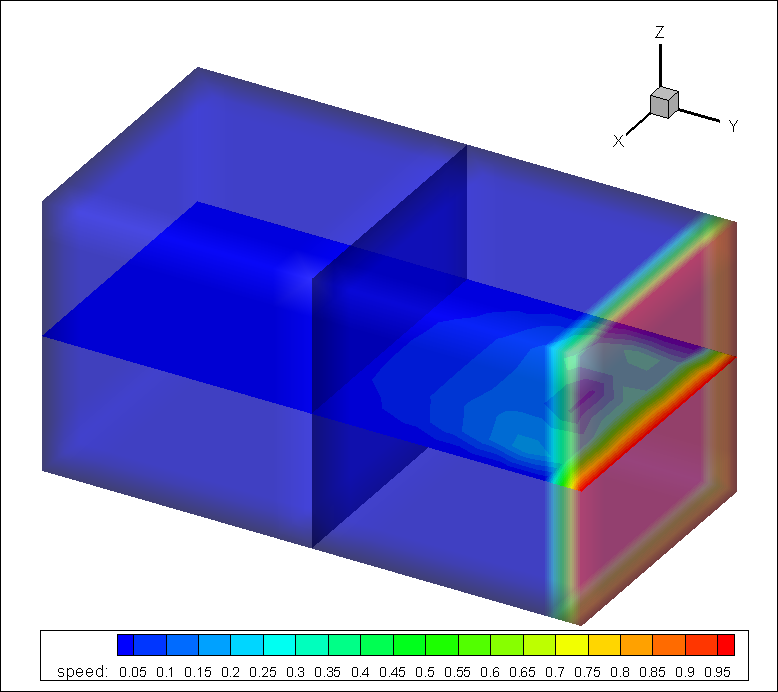}
		\end{minipage}
	}
	\caption{\label{fig1} \small The cubical cavity of BDF2-TF algorithm with different variable time step functions.}
	\centering
	\subfloat[$k_{n_1}$]
	{
		\begin{minipage}[hbpt!]{3.2cm}
			\centering
			\includegraphics[width=3.2cm,height=3.2cm]{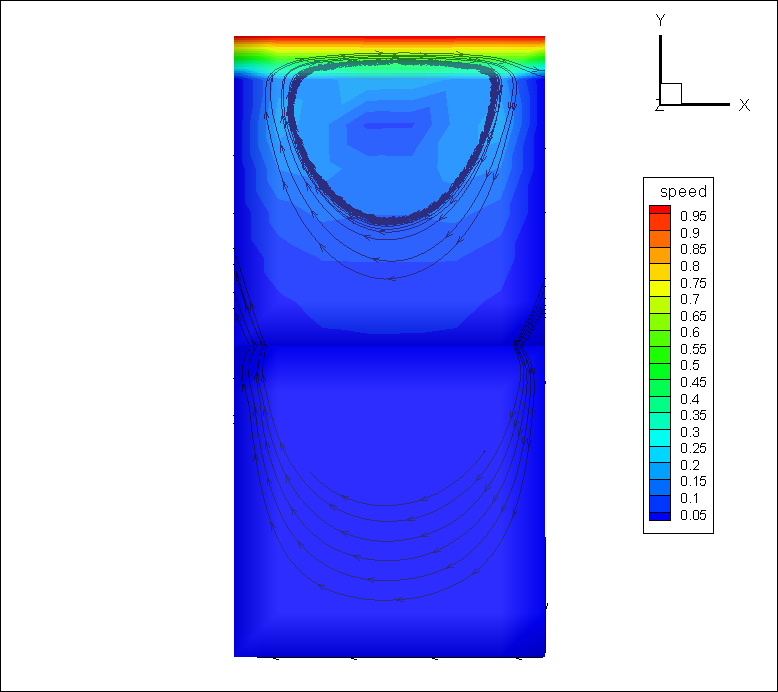}
		\end{minipage}
	}
	\subfloat[$k_{n_2}$]
	{
		\begin{minipage}[hbpt!]{3.2cm}
			\centering
			\includegraphics[width=3.2cm,height=3.2cm]{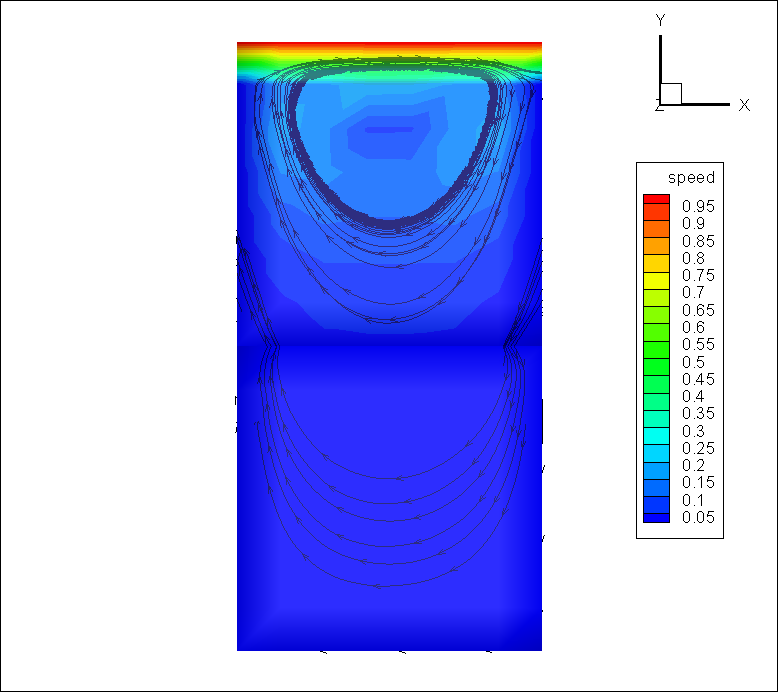}
		\end{minipage}
	}
	\subfloat[$k_{n_3}$]
	{
		\begin{minipage}[hbpt!]{3.2cm}
			\centering
			\includegraphics[width=3.2cm,height=3.2cm]{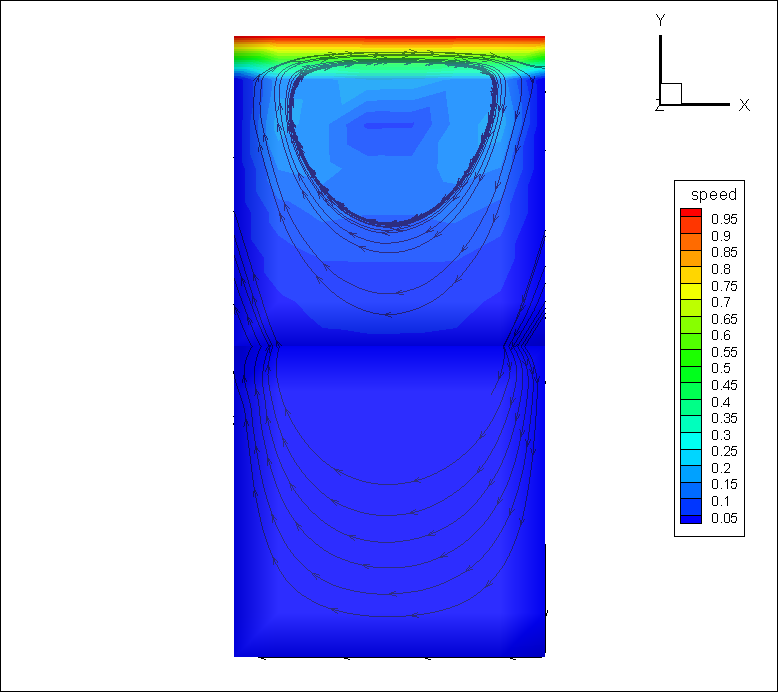}
		\end{minipage}
	}
	\caption{\label{fig2} \small The cross-section view of BDF2-TF algorithm with different variable time step functions at $z=0.5$.}
\end{figure}

From the cubic cavity view depicted and the cross-sectional perspective at $z=0.5$ in Figures \ref{fig1}-\ref{fig2}, it is evident that third-order BDF2-TF algorithm can effectively simulate fluid flow. At the same time, whether the time step increases or decreases, the stability of the algorithm can be reflected.

\begin{figure}[hbpt!]
	\centering
	\subfloat[$k_{n_1}$]
	{
		\begin{minipage}[hbpt!]{3cm}
			\centering
			\includegraphics[width=3cm,height=3cm]{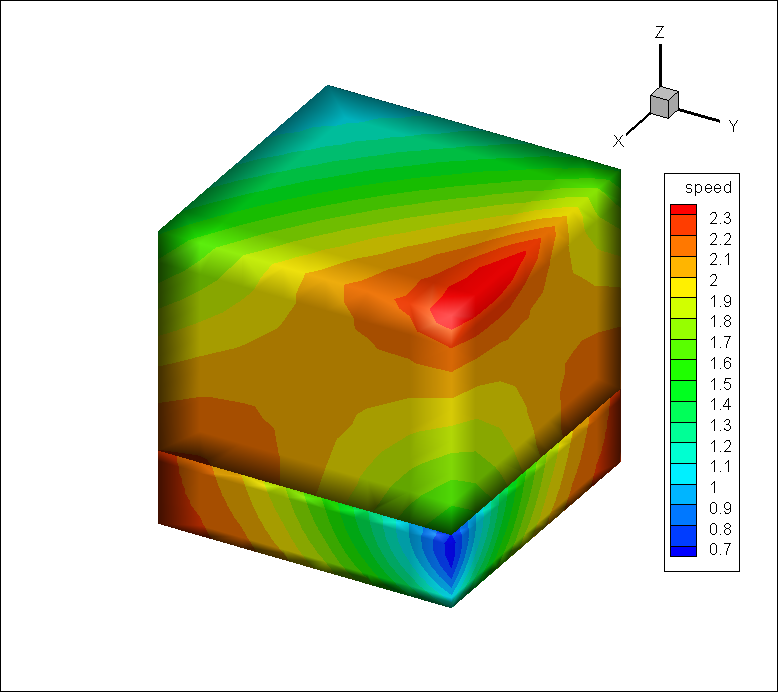}
		\end{minipage}
	}
	\subfloat[$k_{n_2}$]
	{
		\begin{minipage}[hbpt!]{3cm}
			\centering
			\includegraphics[width=3cm,height=3cm]{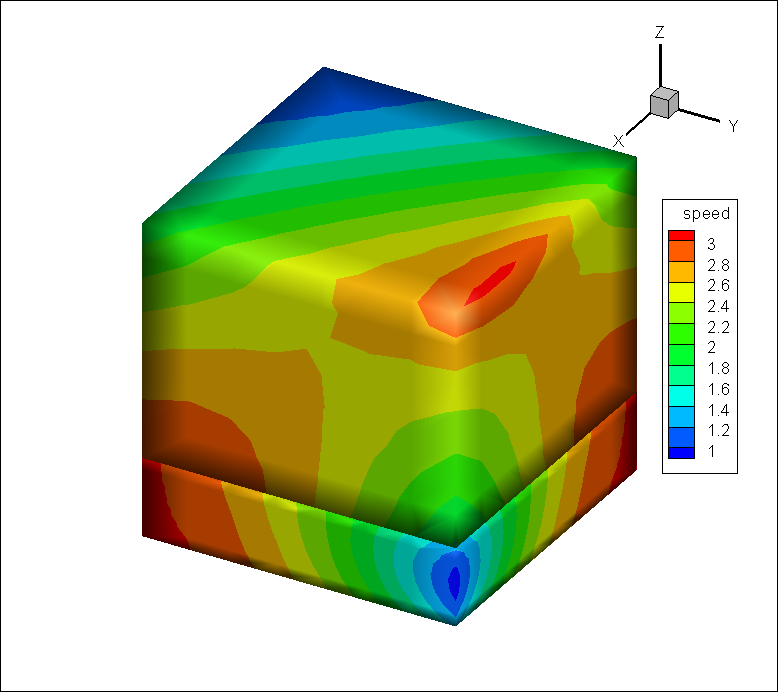}
		\end{minipage}
	}
	\subfloat[$k_{n_3}$]
	{
		\begin{minipage}[hbpt!]{3cm}
			\centering
			\includegraphics[width=3cm,height=3cm]{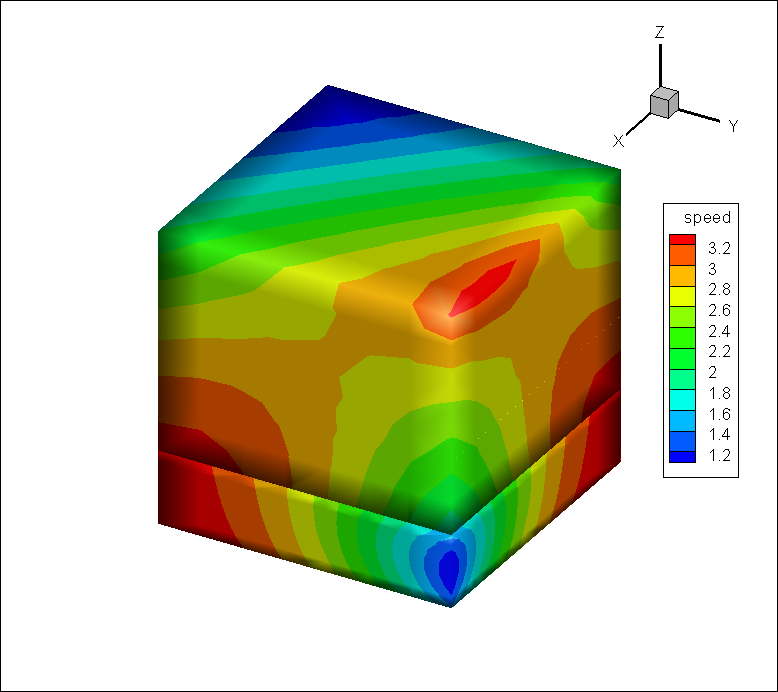}
		\end{minipage}
	}
	\caption{\label{fig3} \small The cubical cavity of BDF2-TF algorithm with different variable time step functions.}
	\centering
	\subfloat[$k_{n_1}$]
	{
		\begin{minipage}[hbpt!]{3cm}
			\centering
			\includegraphics[width=3cm,height=3cm]{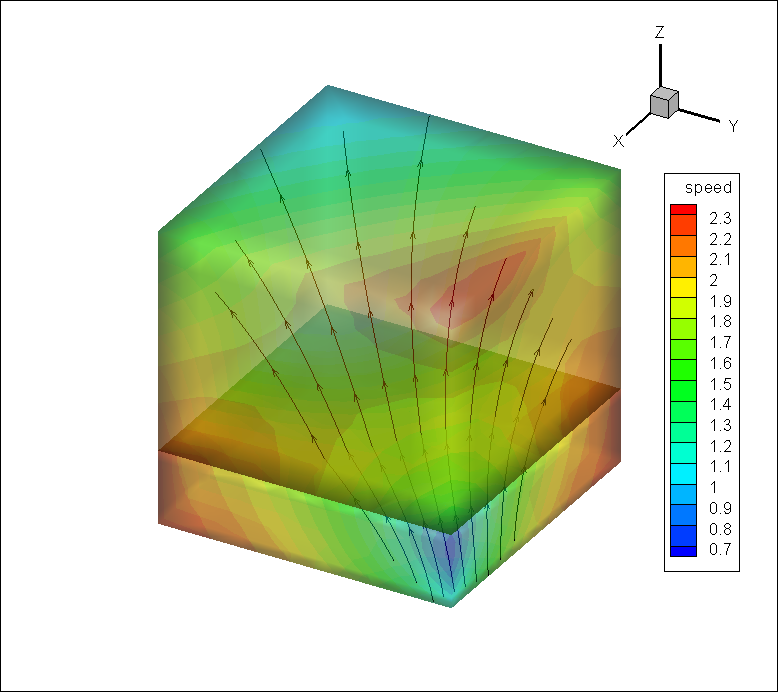}
		\end{minipage}
	}
	\subfloat[$k_{n_2}$]
	{
		\begin{minipage}[hbpt!]{3cm}
			\centering
			\includegraphics[width=3cm,height=3cm]{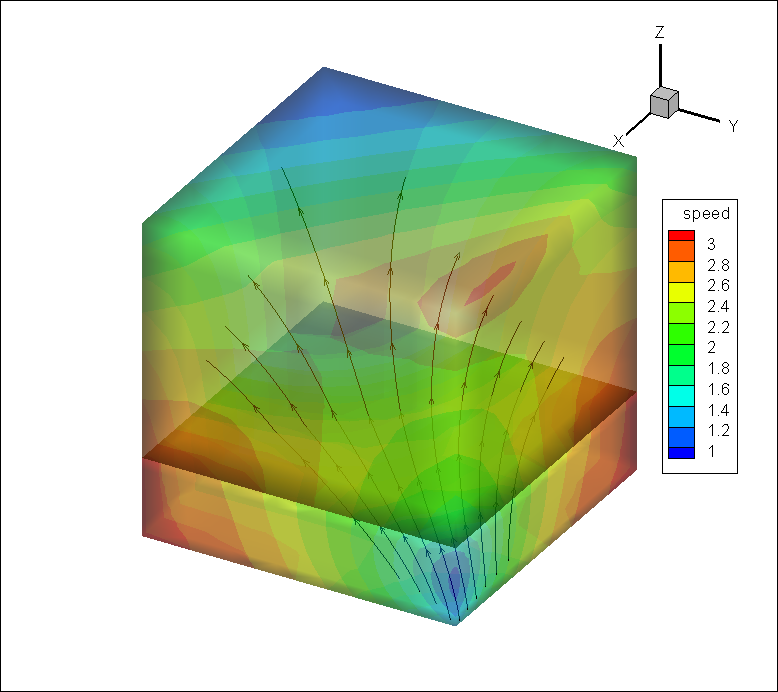}
		\end{minipage}
	}
	\subfloat[$k_{n_3}$]
	{
		\begin{minipage}[hbpt!]{3cm}
			\centering
			\includegraphics[width=3cm,height=3cm]{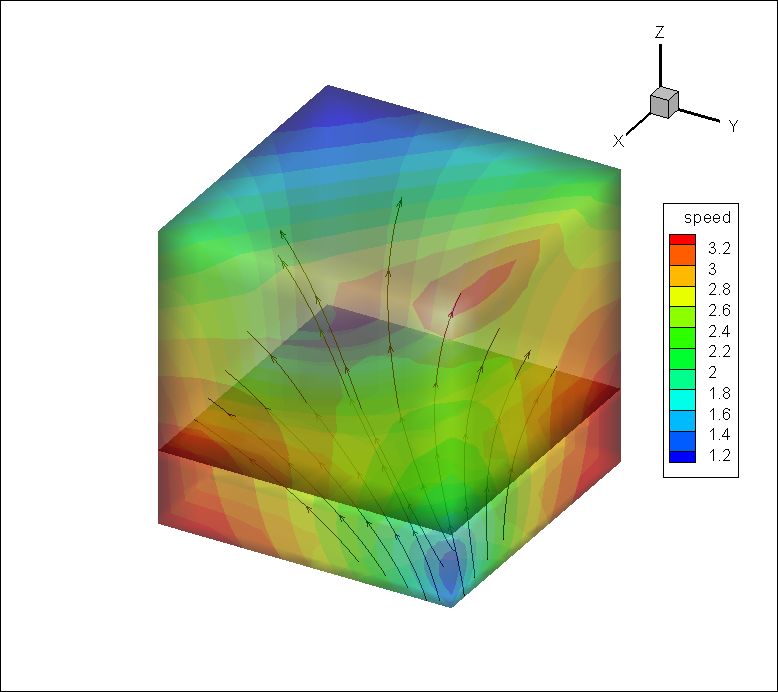}
		\end{minipage}
	}
	\caption{\label{fig4} \small The flow line and velocity size of BDF2-TF algorithm with different variable time step functions.}
	\centering
	\subfloat[$k_{n_1}$]
	{
		\begin{minipage}[hbpt!]{3cm}
			\centering
			\includegraphics[width=3cm,height=3cm]{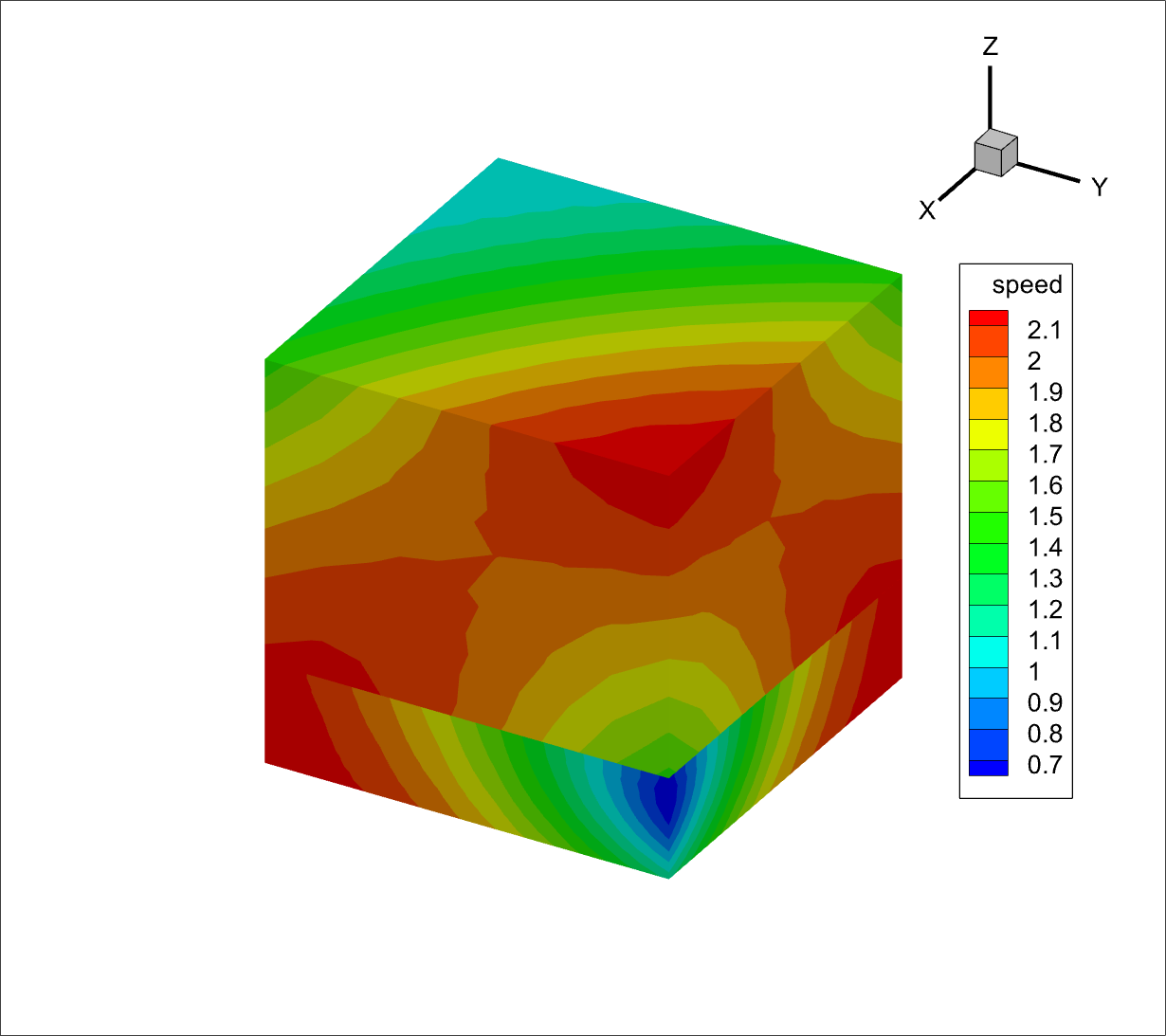}
		\end{minipage}
	}
	\subfloat[$k_{n_2}$]
	{
		\begin{minipage}[hbpt!]{3cm}
			\centering
			\includegraphics[width=3cm,height=3cm]{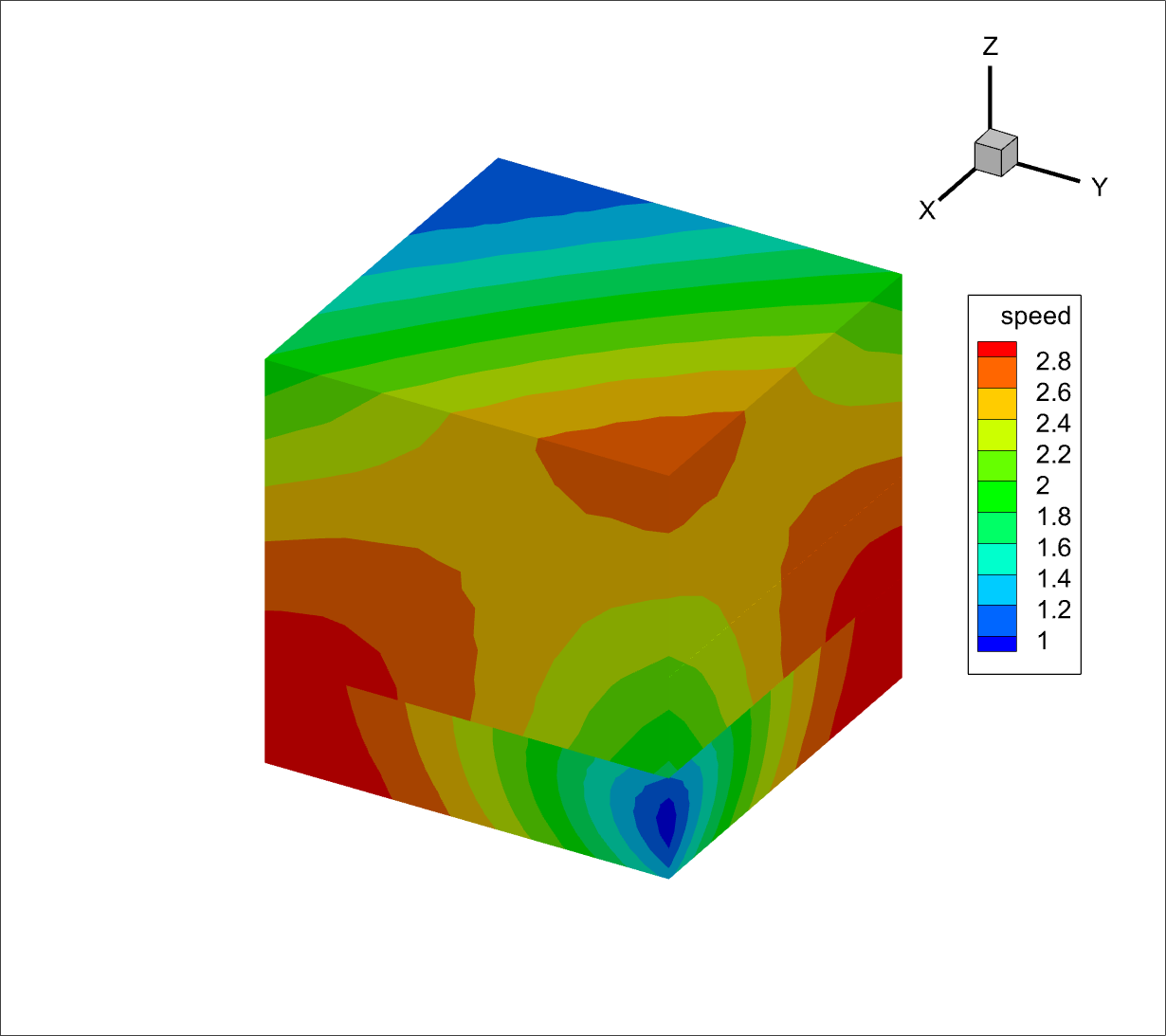}
		\end{minipage}
	}
	\subfloat[$k_{n_3}$]
	{
		\begin{minipage}[hbpt!]{3cm}
			\centering
			\includegraphics[width=3cm,height=3cm]{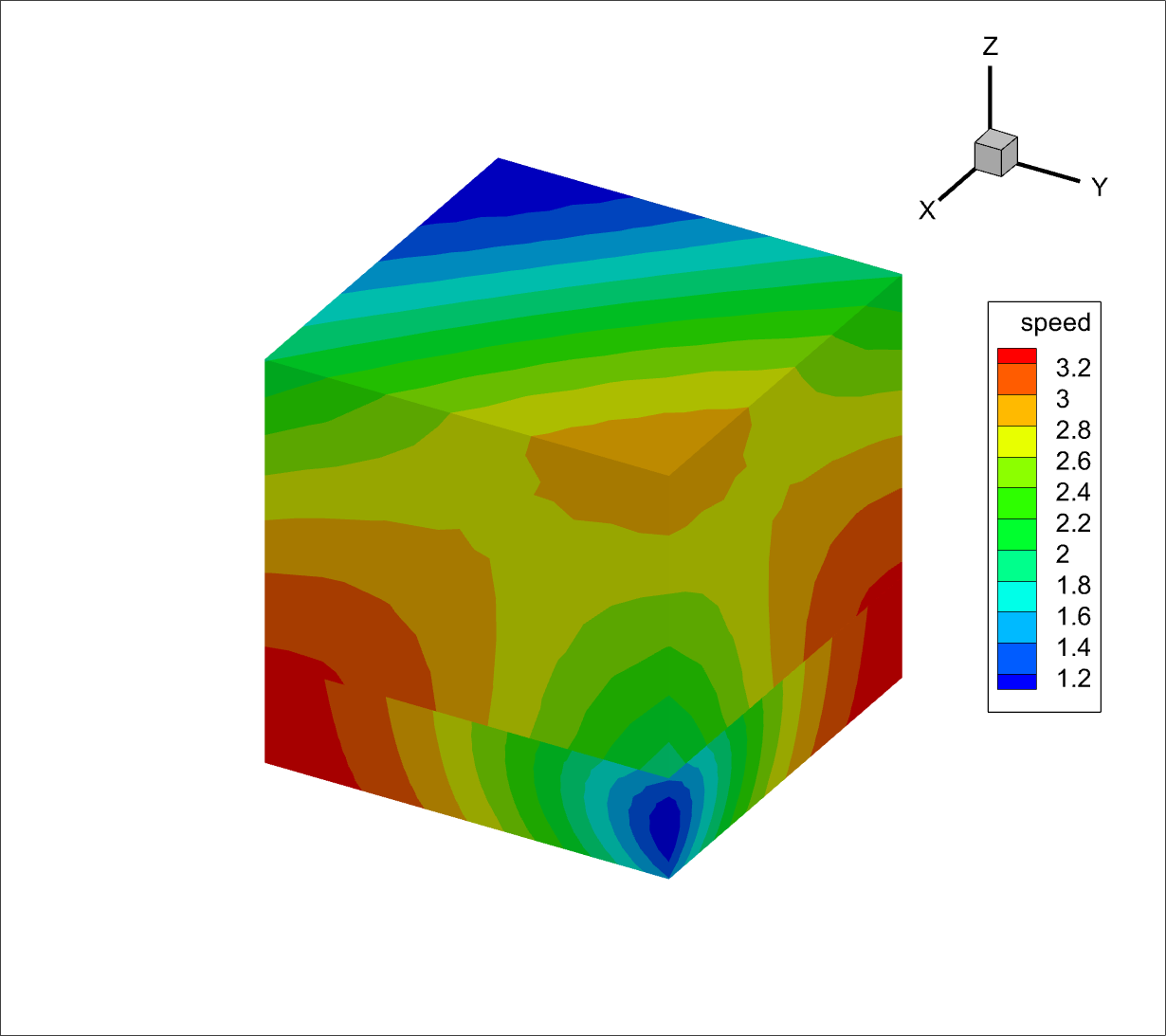}
		\end{minipage}
	}
	\caption{\label{fig5} \small The exact solution is utilized for the computation of the cubical cavity  using the BDF2-TF algorithm with different variable time step functions.}
	\centering
	\subfloat[$k_{n_1}$]
	{
		\begin{minipage}[hbpt!]{3cm}
			\centering
			\includegraphics[width=3cm,height=3cm]{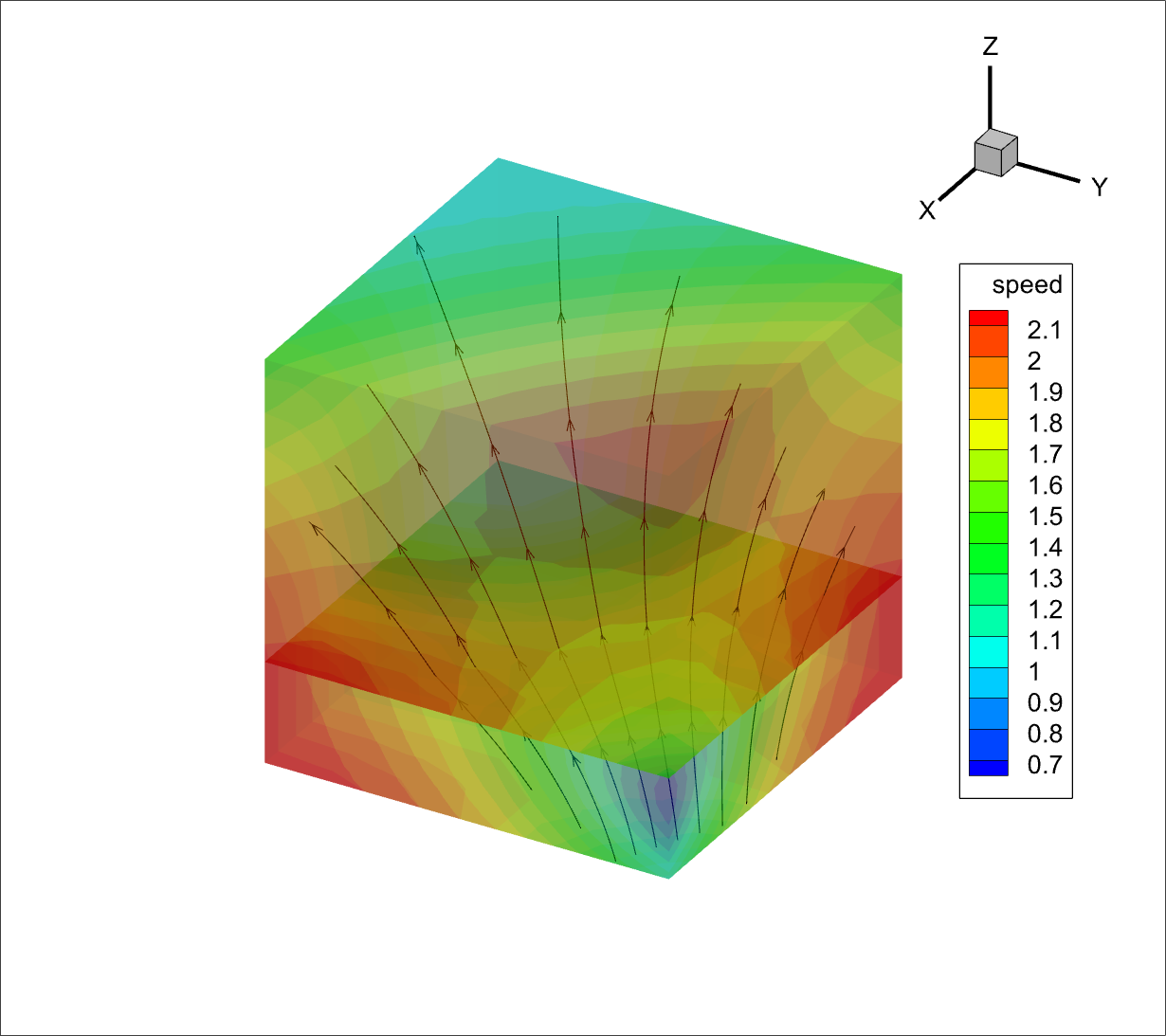}
		\end{minipage}
	}
	\subfloat[$k_{n_2}$]
	{
		\begin{minipage}[hbpt!]{3cm}
			\centering
			\includegraphics[width=3cm,height=3cm]{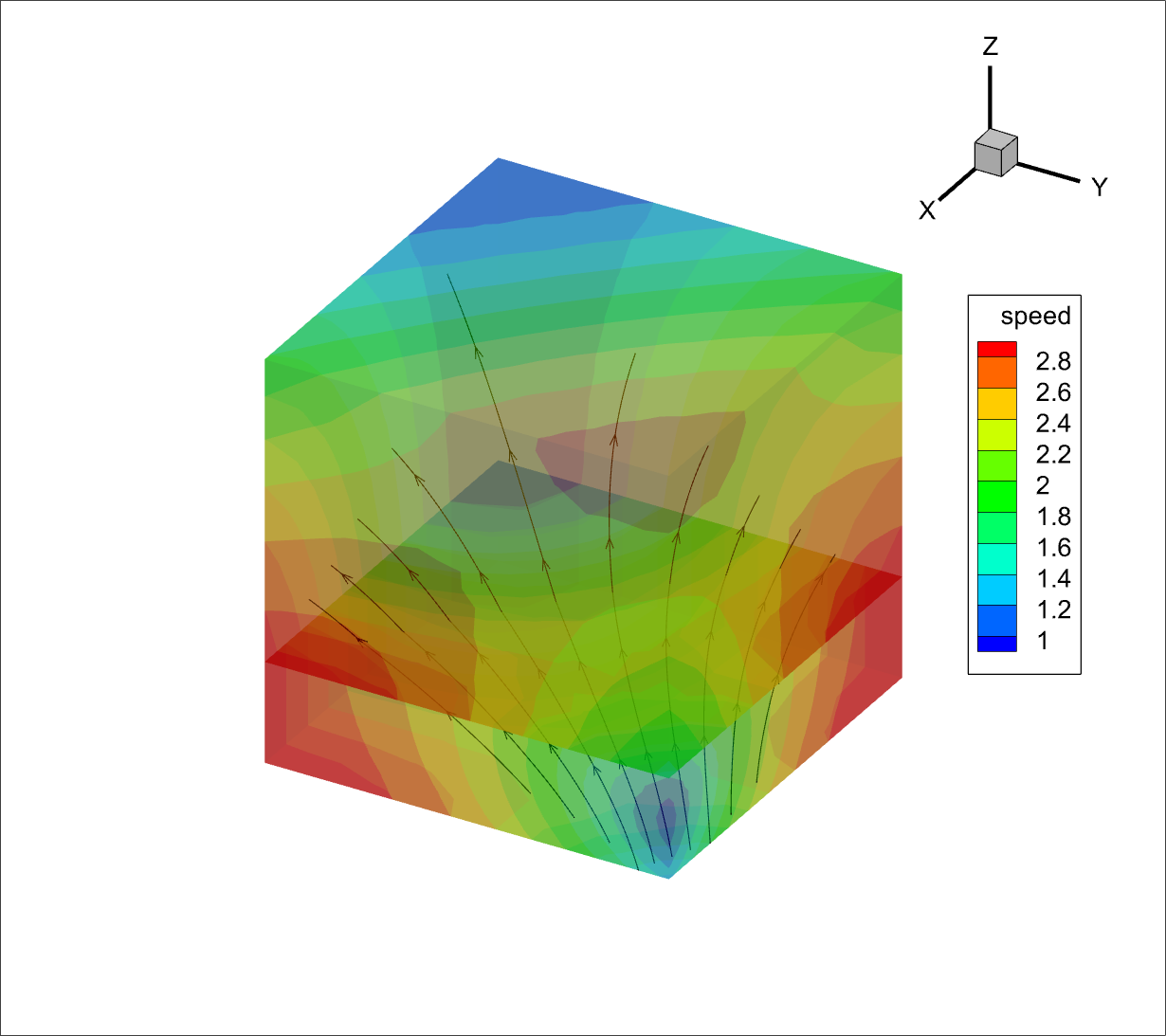}
		\end{minipage}
	}
	\subfloat[$k_{n_3}$]
	{
		\begin{minipage}[hbpt!]{3cm}
			\centering
			\includegraphics[width=3cm,height=3cm]{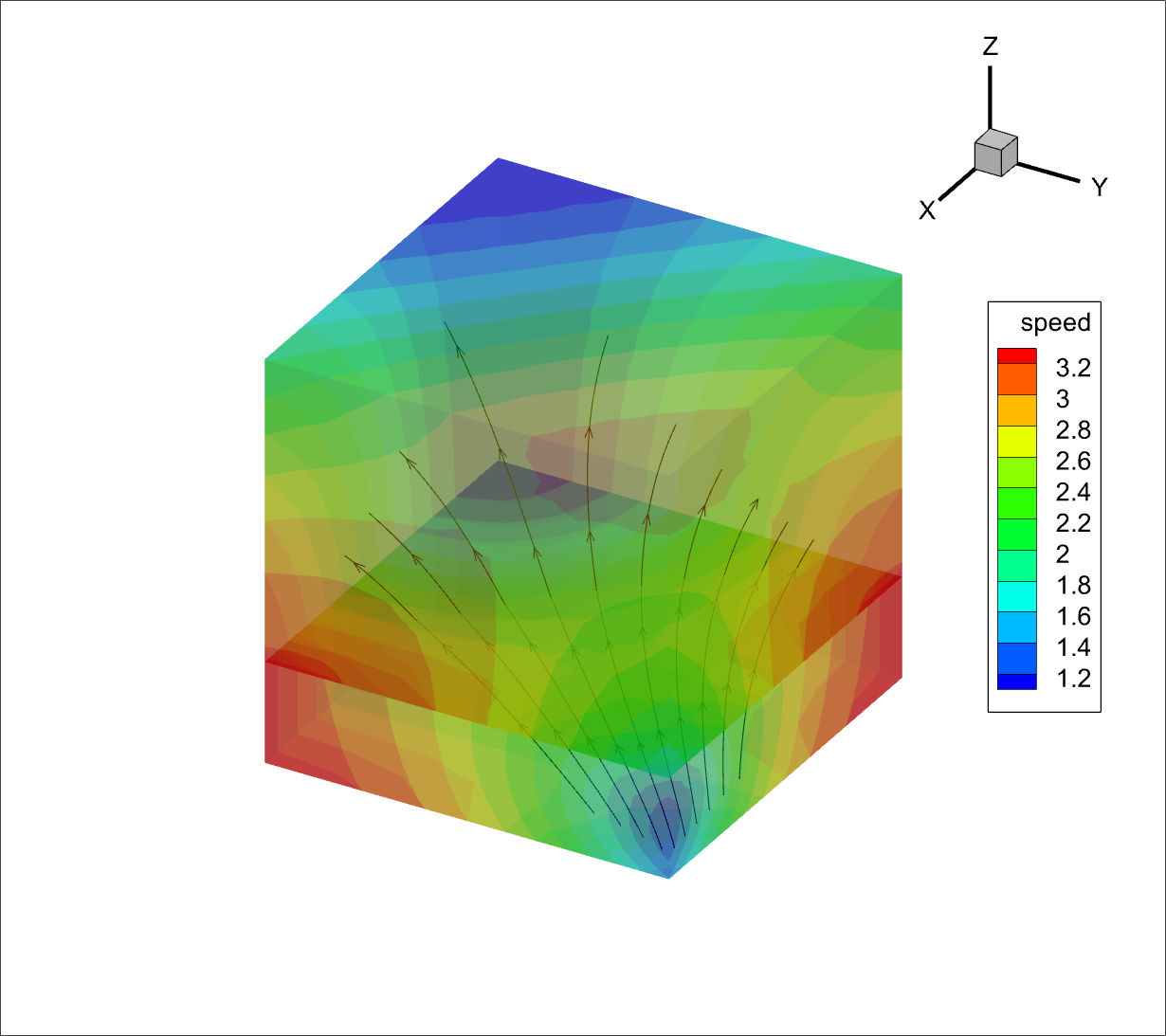}
		\end{minipage}
	}
	\caption{\label{fig6} \small The exact solution is utilized for the computation of the cross-section view using the BDF2-TF algorithm with different variable time step functions at $z=0.5$.}
\end{figure}

\subsubsection{Test 2}
\label{sec6.12}

Let the computational domain $\Omega$ be composed of two regions: the fluid flow region $\Omega_f=[0,1]\times [0,0.25]\times [0,1]$ and the porous media flow region $\Omega_p=[0,1]\times [0.25,0.75]\times [0,1]$, with an interface denoted as $\Gamma=[0,1]\times 0.25 \times [0,1]$. The exact solution is defined as follows:
\begin{align}\label{3d}
	&\u=\left[\begin{array}{c}
		\left(2x\sin(xy)+y(x^2+y^2-8)\cos(xy)\right)\exp(-t)\\
		\left(2y\sin(xy)+x(x^2+y^2-8)\cos(xy)\right)\exp(-t)\\
		1+\left((x^2+y^2)(x^2+y^2-8)\sin(xy)-4\sin(xy)-8xy\cos(xy)\right)z\exp(-t)
	\end{array}\right],\\
	&p=\left(-16xy\cos(xy)+(x^2+y^2+z^2-8)(2x^2+2y^2+2z^2-1)\sin(xy)-8\sin(xy)\right)e^{-t},\\
	&\phi=-z+(-x^2-y^2+8)\exp(-t)\sin(xy)\cos(z).\label{3d3}
\end{align}

We examine the performance of BDF2-TF with variable time-stepping $k_{n_1}$, $k_{n_2}$ and $k_{n_3}$. We successfully created a three-dimensional true solution for the Stokes-Darcy model. It can be seen from Figures \ref{fig3}-\ref{fig4} that BDF2-TF algorithm can effectively simulate fluid flow under any time step sequence. Therefore, the BDF2-TF algorithm we developed has excellent computational performance in three-dimensional case.

Figures \ref{fig5}-\ref{fig6} illustrate the actual variation of the exact solution for different variable time step functions, namely $k_{n_1}$, $k_{n_2}$, and $k_{n_3}$. The numerical solution and the exact solution exhibit a high degree of consistency with the three-dimensional cubical cavity diagram after 40 steps of calculation, thereby demonstrating the superior accuracy and reliability of the adopted BDF2-TF algorithm. Moreover, this algorithm effectively captures the flow characteristics of three-dimensional square cavity flows, thus validating the feasibility and accuracy of our numerical method in solving fluid mechanics problems.

\subsection{Convergence test}

In this part, we will test the convergence order of adaptive time step and variable time step for BDF2, BDF2-TF and BDF3 algorithms in 2D and 3D domains, respectively. All physical parameters, including density $\rho$, gravity $g$, kinematic viscosity $\nu$, hydraulic conductivity $\textbf{K}$, source term coefficient $S$ and thermal expansion coefficient $\alpha$, are set to 1. The final time is fixed at $T=1.0$, while the initial conditions, boundary conditions, and source terms are derived from the exact solution.

\subsubsection{Test the convergence rate in 2D}
\label{sec6.22D}

Set the computational domain $\Omega_f=(0,1)\times (1,2)$, $\Omega_p=(0,1)\times (0,1)$, with the interface $\Gamma=(0,1)\times {1}$. The exact solution is given by:
\begin{align*}
	&\u(x,y,t)=\left(\left[x^2(y-1)^2+y\right]\cos(t),\left[-\frac{2}{3}x(y-1)^3+2-\pi\sin(\pi x)\right]\cos(t)\right)^{T},\\
	&p(x,y,t)=\left[2-\pi\sin(\pi x)\right]\sin\left(\frac{\pi y}{2}\right)\cos(t),\\
	&\phi(x,y,t)=\left[2-\pi\sin(\pi x)\right]\left[1-y-\cos(\pi y)\right]\cos(t).
\end{align*}

\begin{table}[hbpt!]
	\caption{The convergence order and CPU time for adaptive BDF2 algorithm in 2D.}
	\label{tab:2}
	\centering\begin{tabular}{cccccc}
		\hline
		$\qquad\overline{\Delta t}\qquad$ & $\quad Err_{\u}\quad$ &$\quad \rho_{\u}\quad$ &  $\quad Err_{\phi}\quad$ &$\quad \rho_{\phi}\quad$ &$\qquad CPU(s)\qquad$ \\
		\noalign{\smallskip}\hline\noalign{\smallskip}
		$\frac{1}{15}$ & 5.31173$\mathrm{e}-5$ & - & 0.000449915& - &711.86\\
		$\frac{1}{32}$ & 1.51286$\mathrm{e}-5$ & 1.6576 & 0.000106635  &  1.9001 & 1499.59\\
		$\frac{1}{65}$ & 2.87312$\mathrm{e}-6$ & 2.3442 &2.44551$\mathrm{e}-5$  &  2.0780 & 3037.93\\
		$\frac{1}{159}$ &   4.52788$\mathrm{e}-7$ & 2.0656 & 3.65718$\mathrm{e}-6$ & 2.1242 & 7204\\
		\hline
	\end{tabular}
\end{table}
\begin{table}[hbpt!]
	\caption{The convergence order and CPU time for adaptive BDF2-TF algorithm in 2D.}
	\label{tab:3}
	\centering\begin{tabular}{cccccc}
		\hline
		$\qquad\overline{\Delta t}\qquad$ & $\quad Err_{\u}\quad$ &$\quad \rho_{\u}\quad$ &  $\quad Err_{\phi}\quad$ &$\quad \rho_{\phi}\quad$ &$\qquad CPU(s)\qquad$ \\
		\noalign{\smallskip}\hline\noalign{\smallskip}
		$\frac{1}{16}$ & 4.29182$\mathrm{e}-5$ & - & 3.55191$\mathrm{e}-5$ & - & 678.113\\
		$\frac{1}{33}$ & 4.57457$\mathrm{e}-6$ & 3.0926 & 3.73058$\mathrm{e}-6$  &  3.1129 &1303.34 \\
		$\frac{1}{71}$ & 4.26536$\mathrm{e}-7$ & 3.0967 & 3.37895$\mathrm{e}-7$  &  3.1345 & 2850.56\\
		$\frac{1}{167}$ &  2.54757$\mathrm{e}-8$ & 3.2947 &  2.09852$\mathrm{e}-8$ & 3.2490 & 7039.2\\
		\hline
	\end{tabular}
\end{table}
\begin{table}[hbpt!]
	\caption{The convergence order and CPU time for adaptive BDF3 algorithm in 2D.}
	\label{tab:4}
	\centering\begin{tabular}{cccccc}
		\hline
		$\qquad\overline{\Delta t}\qquad$ & $\quad Err_{\u}\quad$ &$\quad \rho_{\u}\quad$ &  $\quad Err_{\phi}\quad$ &$\quad \rho_{\phi}\quad$ &$\qquad CPU(s)\qquad$ \\
		\noalign{\smallskip}\hline\noalign{\smallskip}
		$\frac{1}{22}$ &  4.27834$\mathrm{e}-7$ & - &  4.31139$\mathrm{e}-6$ & - & 3036.27\\
		$\frac{1}{47}$ & 5.42544$\mathrm{e}-8$ & 2.7204 & 5.4627$\mathrm{e}-7$ & 2.7215 & 7497.93\\
		$\frac{1}{97}$ & 6.82596$\mathrm{e}-9$ & 2.8610 & 6.86739$\mathrm{e}-8$  &  2.8621 &14615.8 \\
		$\frac{1}{197}$ & 8.77076$\mathrm{e}-10$ & 2.8961 & 8.64487$\mathrm{e}-9$  &  2.9251 & 29553.1\\
		\hline
	\end{tabular}
\end{table}

Simulations were performed at the final time $T$ using BDF2, BDF2-TF, and BDF3 with adaptive time step, the mesh size is set to $h=\frac{1}{120}$. $\overline{\Delta t}$ represents the average time step. Let $\hat{\gamma}=1.0$ and $\check{\gamma}=0.5$, with various tolerances ranging from $10^{-4}$ to $10^{-7}$. The results in Table \ref{tab:2} show that the convergence order of the BDF2-TF algorithm in the two-dimensional program is third order, which is consistent with the theoretical results. By comparison, the time filter method effectively improves the accuracy on the basis of almost no additional computation. In Table \ref{tab:3}-\ref{tab:4}, it can be observed that at the same convergence order, adaptive BDF2-TF reduces the total effort (number of steps taken) required by fewer time steps compared to adaptive BDF3.

\subsubsection{Test the convergence rate in 3D}
\label{sec6.23D}

Consider the model in 3D, let $\Omega=(0,1)\times (0,2)\times (0,1)$ with $\Omega_f=\{(x,y,z)\in \Omega|y\geq1\}$ and $\Omega_p=\{(x,y,z)\in \Omega|y\leq1\}$, and $\Gamma=\{(x,y,x)\in\Omega|y=1\}$. We utilize the exact solution of (\ref{3d})-(\ref{3d3}) in Test \ref{sec6.12}.
\begin{table}[hbpt!]
	\caption{The convergence order and CPU time for variable time step BDF2 algorithm in 3D.}
	\label{tab:5}
	\centering\begin{tabular}{cccccccc}
		\hline
		$\quad\overline{\Delta t}$ & $\quad Err_{\u}$ & $\quad\kappa_{\u}$ &  $\quad Err_p$ & $\quad\kappa_{p}$ & $\quad Err_{\phi}$ & $\quad\kappa_{\phi}$ & $\quad$CPU(s)\\
		\noalign{\smallskip}\hline\noalign{\smallskip}
		$\frac{1}{50}$ &  6.29712$\mathrm{e}-6$ & - &  0.000353835 & - & 1.47993$\mathrm{e}-5$ & - & 539.139 \\
		$\frac{1}{100}$ & 1.57566$\mathrm{e}-6$ & 1.9949 & 8.74589$\mathrm{e}-5$ & 2.0008 & 3.66499$\mathrm{e}-6$ & 2.0003 & 1076.51 \\
		$\frac{1}{200}$ & 4.01952$\mathrm{e}-7$ & 1.9709 & 2.17446$\mathrm{e}-5$ &  2.0079  &  9.14392$\mathrm{e}-7$ & 2.0029 & 2355.4 \\
		$\frac{1}{400}$ & 1.06336$\mathrm{e}-7$ & 1.9184 & 5.42315$\mathrm{e}-6$ &  2.0035 & 2.29601$\mathrm{e}-7$ & 1.9937 & 4209.78 \\
		\hline
	\end{tabular}
\end{table}
\begin{table}[hbpt!]
	\caption{The convergence order and CPU time for variable time step BDF2-TF algorithm in 3D.}
	\label{tab:6}
	\centering\begin{tabular}{cccccccc}
		\hline
		$\quad\overline{\Delta t}$ & $\quad Err_{\u}$ & $\quad\kappa_{\u}$ &  $\quad Err_p$ & $\quad\kappa_{p}$ & $\quad Err_{\phi}$ & $\quad\kappa_{\phi}$ & $\quad$CPU(s)\\
		\noalign{\smallskip}\hline\noalign{\smallskip}
		$\frac{1}{50}$ &  6.36928$\mathrm{e}-6$ & - &  1.28441$\mathrm{e}-5$ & - & 1.43813$\mathrm{e}-6$ & - & 515.041 \\
		$\frac{1}{100}$ & 7.37954$\mathrm{e}-7$ & 3.1095 & 1.58327$\mathrm{e}-6$ & 3.0201 & 1.69195$\mathrm{e}-7$ & 3.0874 & 1070.18 \\
		$\frac{1}{200}$ & 6.02095$\mathrm{e}-8$ & 3.6155 & 1.78694$\mathrm{e}-7$ &  3.1473  &  1.57249$\mathrm{e}-8$ & 3.4276 & 2334.43 \\
		$\frac{1}{400}$ & 1.02827$\mathrm{e}-8$ & 2.5498 & 1.99275$\mathrm{e}-8$ &  3.1647 & 1.34739$\mathrm{e}-9$ & 3.5448 & 4301.48 \\
		\hline
	\end{tabular}
\end{table}
\begin{table}[hbpt!]
	\caption{The convergence order and CPU time for variable time step BDF3 algorithm in 3D.}
	\label{tab:7}
	\centering\begin{tabular}{cccccccc}
		\hline
		$\quad\overline{\Delta t}$ & $\quad Err_{\u}$ & $\quad\kappa_{\u}$ &  $\quad Err_p$ & $\quad\kappa_{p}$ & $\quad Err_{\phi}$ & $\quad\kappa_{\phi}$ & $\quad$CPU(s)\\
		\noalign{\smallskip}\hline\noalign{\smallskip}
		$\frac{1}{50}$ &  1.14709$\mathrm{e}-6$ & - &  6.80998$\mathrm{e}-5$ & - & 2.77866$\mathrm{e}-6$ & - & 589.239 \\
		$\frac{1}{100}$ & 1.36846$\mathrm{e}-7$ & 2.8952 & 8.24132$\mathrm{e}-6$ & 2.8746 & 3.36944$\mathrm{e}-7$ & 2.8717 & 1161.499 \\
		$\frac{1}{200}$ & 1.56288$\mathrm{e}-8$ & 2.9590 & 1.01156$\mathrm{e}-6$ &  2.8543  &  4.09574$\mathrm{e}-8$ & 2.8682 & 2578.49 \\
		$\frac{1}{400}$ & 1.65364$\mathrm{e}-9$ & 3.0742 & 1.26272$\mathrm{e}-7$ &  2.8304 & 4.7064$\mathrm{e}-9$ & 2.9499 & 5181.81 \\
		\hline
	\end{tabular}
\end{table}

For the 3D variable time step BDF2, BDF2-TF and BDF3 algorithms, we opt for uniform triangular meshes with a fixed mesh size of $h=\frac{1}{7}$. The time step change is calculated by the sum of the current time step and the time step that changes periodically according to the sinusoidal function. In this particular context, we employ the same algorithm as described in \cite{3} to compute the convergence order with respect concerning the time step $\Delta t$. Therefore, we define
$$\rho_v=\frac{\left\|v_h^{\Delta t}-v_h^{\frac{\Delta t}{2}}\right\|_0}{\left\|v_h^{\frac{\Delta t}{2}}-v_h^{\frac{\Delta t}{4}}\right\|_0}.$$
When considering variables such as velocity $\u$, pressure $p$, and  hydraulic head $\phi$, $\rho_v$ can be approximated to approximately $\frac{(4^\kappa-2^\kappa)}{(2^\kappa-1)}$. Specifically, for $\kappa=2$, the corresponding order of convergence in time is $O(\Delta t^2)$, resulting in $\rho_v$ approximately 4. For $\kappa=3$, the order of convergence increases to $O(\Delta t^3)$ and $\rho_v$ becomes approximately 8.

The test employs variable time step to detect the convergence order of the three-dimensional BDF2, BDF2-TF, and BDF3 algorithms. The results demonstrate that the numerical convergence rates of BDF2, BDF2-TF, and BDF3 for $\u$ and $\phi$ are approximately second order, third order, and third order respectively in the three-dimensional program. By comparing Table \ref{tab:5} and Table \ref{tab:6}, it is evident that employing a  time filter can raise the convergence order from second order to third order without increasing the computation time. Furthermore, by comparing Table \ref{tab:6} and Table \ref{tab:7}, it becomes apparent that for equal convergence order, the computation time of BDF2-TF is lower than of BDF3, thereby highlighting the computational efficiency achieved through utilizing a time filter.
\begin{figure}[hbpt!]
	\centering
	{
		\begin{minipage}[hbpt!]{5cm}
			\centering
			\includegraphics[width=5cm,height=5cm]{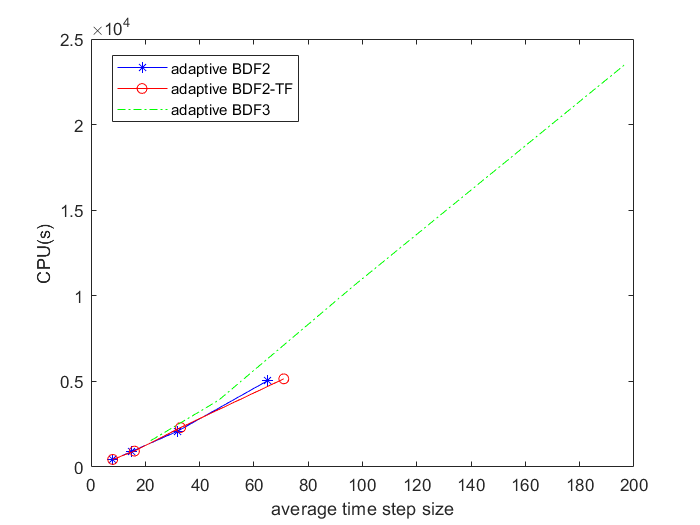}
		\end{minipage}
	}
	{
		\begin{minipage}[hbpt!]{5cm}
			\centering
			\includegraphics[width=5cm,height=5cm]{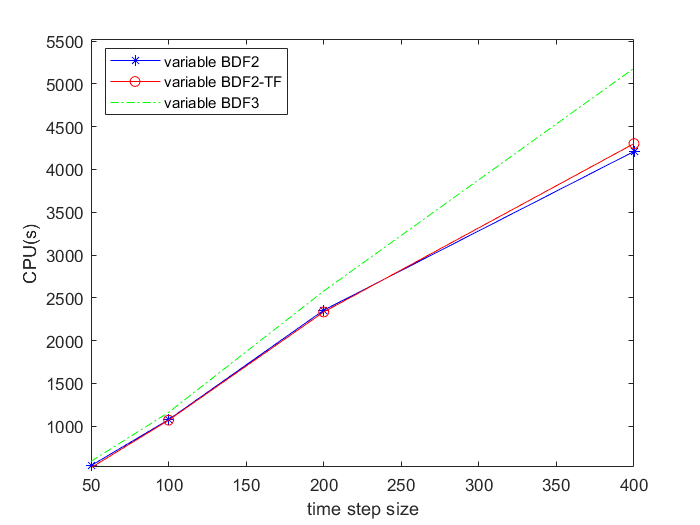}
		\end{minipage}
	}
	\caption{\label{fig14} \small Line graphs of the relationship between CPU calculating time and time step for adaptive and variable time step algorithms.}
\end{figure}

Figure \ref{fig14} describes the relationship between CPU computation time and time step, showing that the time filter method can improve the convergence order without increasing the computation time.

\subsection{Two-face horizontal open-holes attached with a vertical production wellbore and two injection wellbores completion}

Due to the micrometer and nano-scale pore structure in shale reservoirs, traditional hydrodynamic models struggle to accurately describe the flow behavior of these reservoirs. The Stokes-Darcy model provides a detailed depiction of fluid movement in both free flow regions through the application of Stokes equation and porous media via Darcy's law. This model effectively captures complex flow phenomena within the shale microstructure. By utilizing this model, researchers can gain better insights into fluid flow laws in shale reservoirs, optimize production strategies, and ultimately enhance shale oil recovery and economic benefits. The application of this model not only advances reservoir engineering development but also establishes a scientific foundation for efficient energy resource utilization.
\begin{figure}[hbpt!]
	\centering
	{
		\begin{minipage}[hbpt!]{5cm}
			\centering
			\includegraphics[width=5cm,height=5cm]{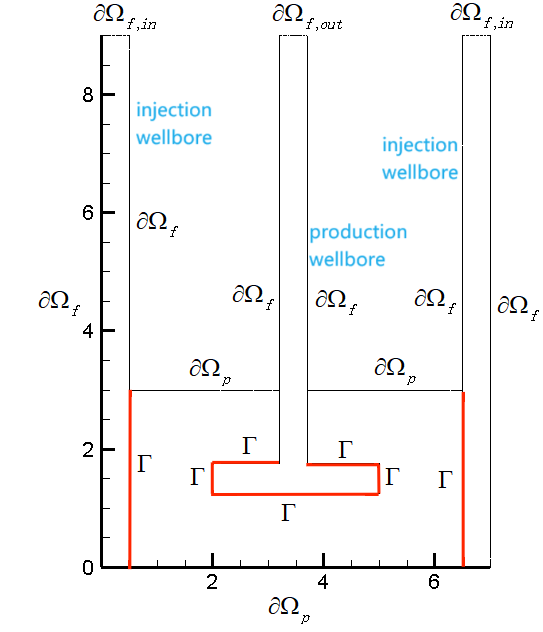}
		\end{minipage}
	}
	{
		\begin{minipage}[hbpt!]{5cm}
			\centering
			\includegraphics[width=5cm,height=5cm]{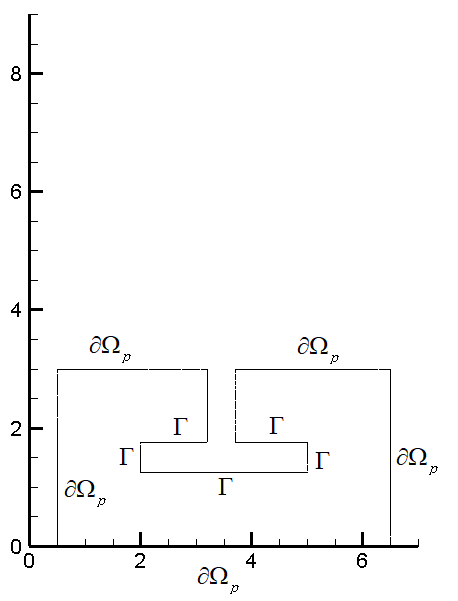}
		\end{minipage}
	}
	\caption{\label{fig15} \small Sketch of the computational domain for the two face horizontal wellbore attached with vertical production well and two injection wellbore completion.}
\end{figure}

The completion of a two-face horizontal open-hole attached with a vertical production wellbore and two injection wellbores represents a simplified conceptual domain for the extraction of hydrocarbons from a porous medium reservoir. The computational domain is depicted in Figure \ref{fig15}. The primary objective of this numerical example is to simulate fluid flow using the Stokes-Darcy model. Inflow boundary conditions are imposed on the top of the two injection wellbores, with $\partial\Omega_{f,in}=-4096(0.25-x)$ applied to the left vertical well and $\partial\Omega_{f,in}=-4096(x-6.75)(7-x)$ applied to the right vertical well. On the contrary, a homogeneous Neumann boundary condition is enforced at the top of the production wellbore by setting $\partial\Omega_{f,in}=(-pI+\nu\nabla\u)\cdot\n_f=0$ to ensure free outflow. The other boundaries $\partial\Omega_f\backslash\Gamma$ of the incompressible flow region are assumed to have a no-slip boundary condition with $\u=0$. Additionally, a constant pressure boundary condition of $\phi=10^4$ is imposed for the porous media flow region. The hydraulic conductivity and kinetic viscosity are given as $\textbf{K}=0.1$ and $\nu=10^{-3}$ respectively. The interface between the incompressible flow region and porous media flow region satisfies three interface conditions (\ref{BJS}). Both initial values and external force terms are initialized as $0$. The mesh size and final time are considered as $h=\frac{1}{40}$ and $T=10.0$. We consider the case: BDF2, BDF2-TF, and BDF3 with variable time steps $k_{n_5}$
\begin{align*}
	&k_{n_5}=\left\{\begin{array}{cc}
		0.01 & 3\leq n\leq10,\\
		0.01+0.005\sin(10t_n) & n>10.
	\end{array}\right.
\end{align*}

\begin{figure}[hbpt!]
	\centering
	\subfloat[BDF2(5517.36s)]
	{
		\begin{minipage}[hbpt!]{4.5cm}
			\centering
			\includegraphics[width=4.5cm,height=4.5cm]{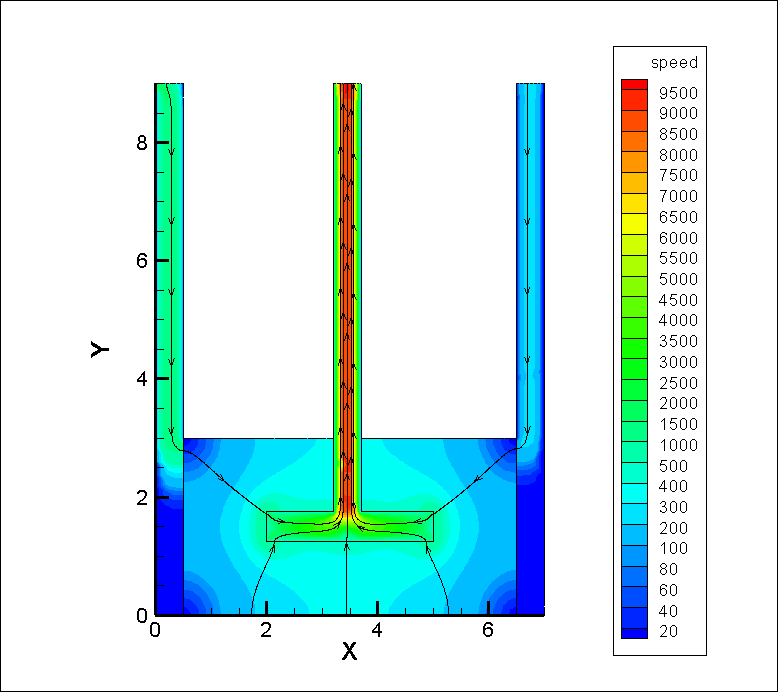}
		\end{minipage}
	}
	\subfloat[BDF2-TF(5329.19s)]
	{
		\begin{minipage}[hbpt!]{4.5cm}
			\centering
			\includegraphics[width=4.5cm,height=4.5cm]{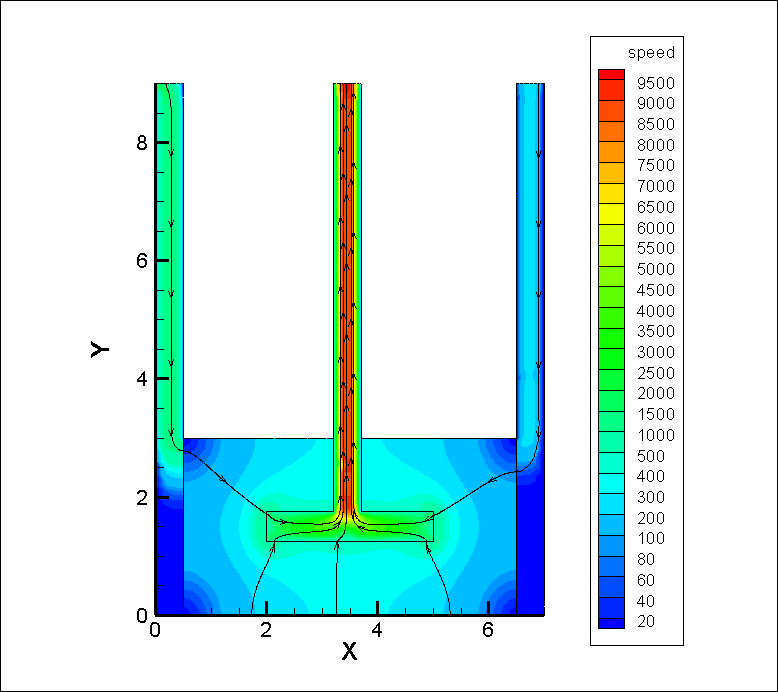}
		\end{minipage}
	}
	\subfloat[BDF3(7351.34s)]
	{
		\begin{minipage}[hbpt!]{4.5cm}
			\centering
			\includegraphics[width=4.5cm,height=4.5cm]{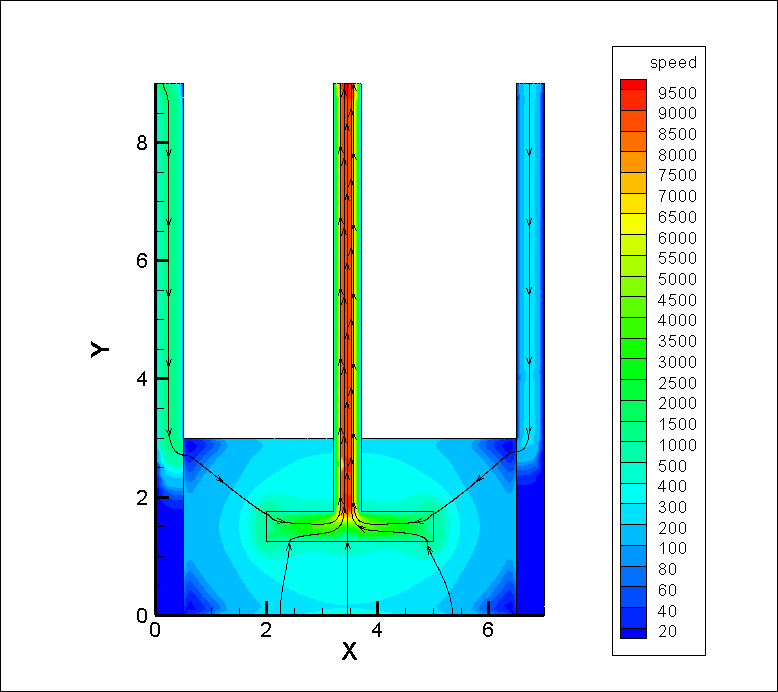}
		\end{minipage}
	}
	\caption{\label{fig16} \small Flow speed and streamlines in the Stokes-Darcy domain with variable time-stepping $k_{n_5}$.}
\end{figure}

The Figure \ref{fig16} illustrates the outcomes of simulating shale oil flows using variable time-stepping algorithms (BDF2, BDF2-TF, and BDF3) within the framework of the Stokes-Darcy model. By comparing the velocity field and streamline distribution resulting from the algorithms, it is evident that they exhibit slight variations in their impact on the velocity field during simulation; however, overall results remain similar, indicating their effective handling of the coupled model. This diagram serves as a valuable reference for researchers to select an appropriate time step strategy that balances calculation accuracy with cost. The computation time of three algorithms (BDF2, BDF2-TF, and BDF3) in simulating shale oil production is compared. The results demonstrate that the BDF2-TF algorithm exhibits the shortest execution time, indicating its superior computational efficiency.

\bibliographystyle{plain}
\bibliography{BDF2_TF}

\section{Appendix}
\label{sec6:appendix a}
\textbf{Appendix A}

We have observed that incorporating a time filter after the BDF3 scheme in the Stokes-Darcy model leads to an enhancement of the original third-order algorithm, elevating it to fourth-order accuracy. To validate this improvement, we will continue utilizing the numerical example presented in the preceding section.
~\\
~\\
\textbf{BDF3 plus Time Filter (Constant BDF3-TF)}
\begin{figure}[t]
 \centering
    \subfloat[BDF3-TF cubical cavity]
    {
        \begin{minipage}[hbpt!]{5cm}
            \centering
            \includegraphics[width=5cm,height=5cm]{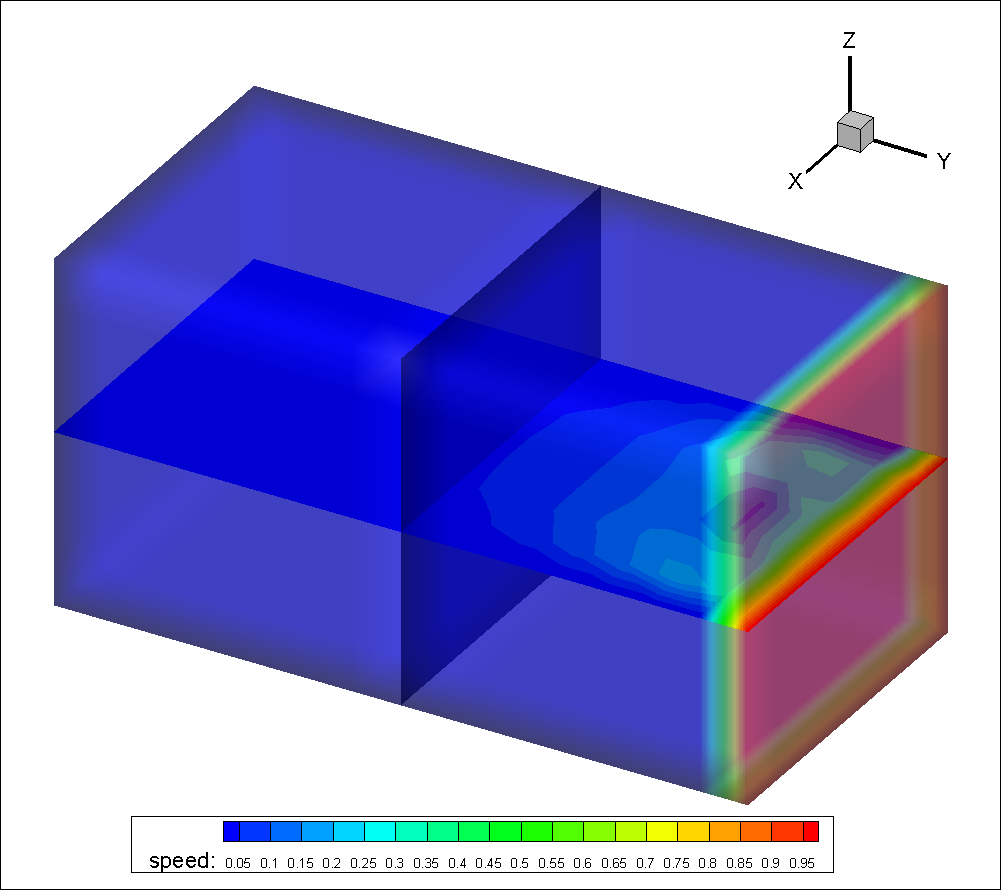}
            \end{minipage}
    }
    \subfloat[BDF3-TF cross-section view]
    {
        \begin{minipage}[hbpt!]{5cm}
            \centering
            \includegraphics[width=5cm,height=5cm]{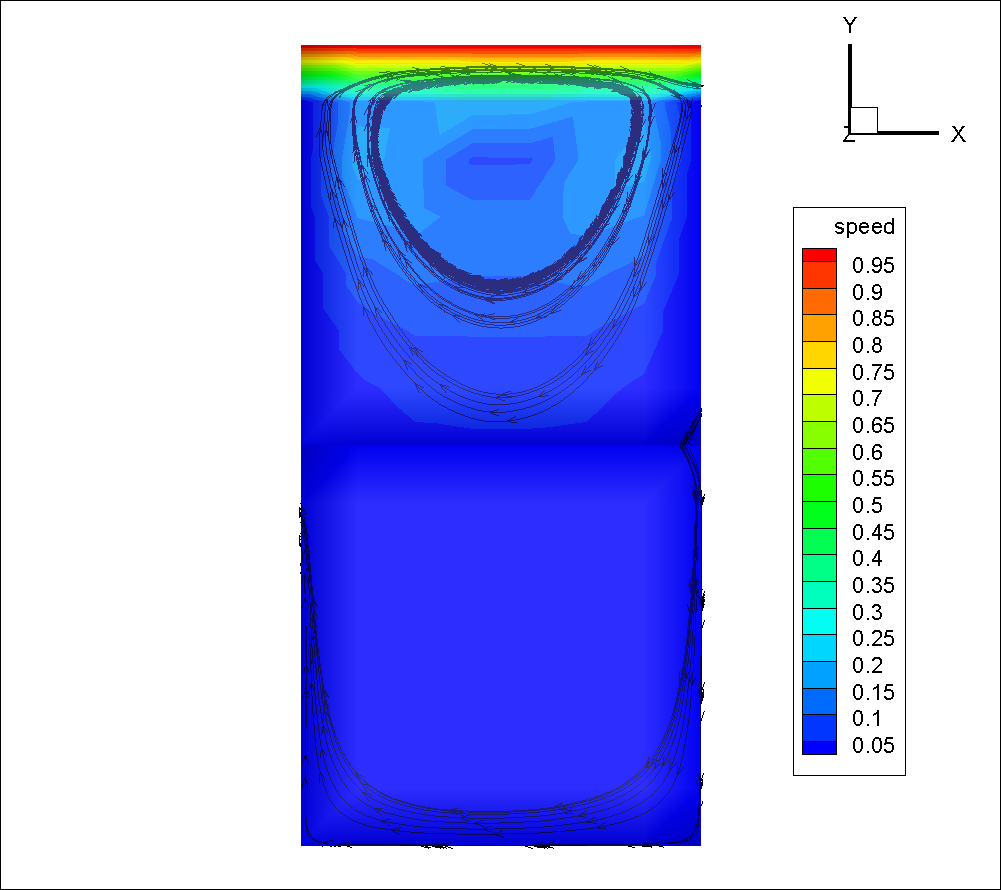}
            \end{minipage}
    }
\caption{\label{fig14} \small The flow line and velocity size of BDF3-TF with variable time-stepping $k_{n-2}$ at $z=0.5$.}
\centering
    \subfloat[BDF3-TF cubical cavity]
    {
        \begin{minipage}[hbpt!]{5cm}
            \centering
            \includegraphics[width=5cm,height=5cm]{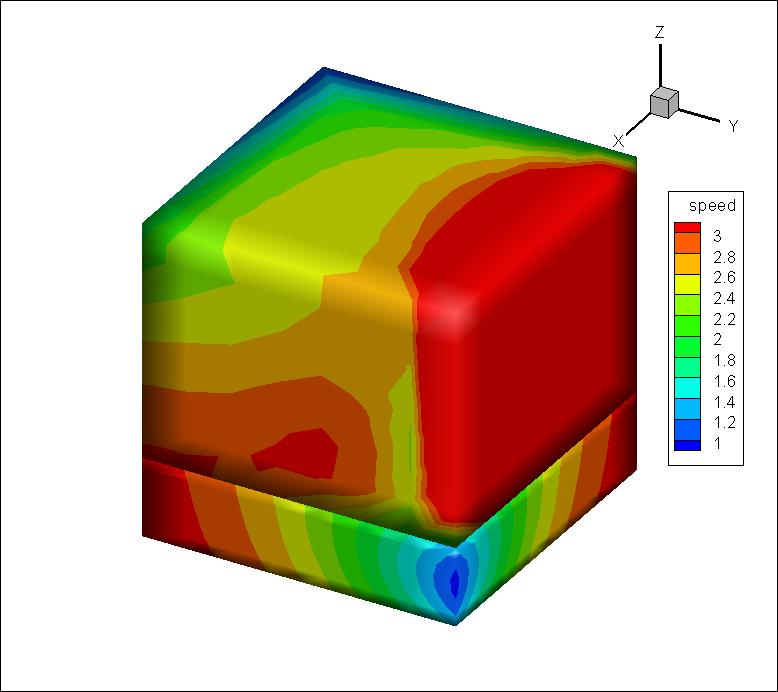}
            \end{minipage}
    }
    \subfloat[BDF3-TF cross-section view]
    {
        \begin{minipage}[hbpt!]{5cm}
            \centering
            \includegraphics[width=5cm,height=5cm]{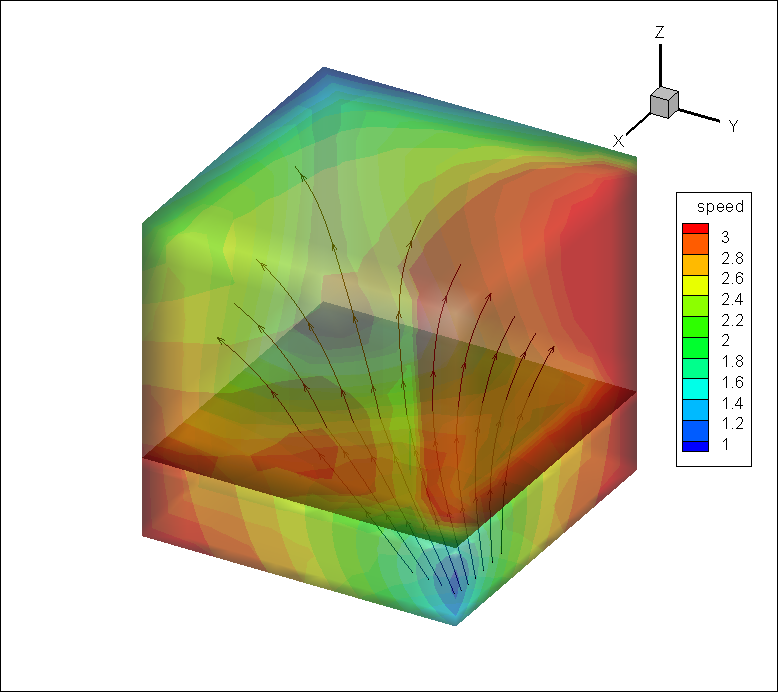}
            \end{minipage}
    }
\caption{\label{fig15} \small The flow line and velocity size of BDF3-TF with variable time-stepping $k_{n-2}$.}
\end{figure}

1. Give $\left(\u_h^0,p_h^0,\phi_h^0\right)$, $\left(\u_h^1,p_h^1,\phi_h^1\right)$, $\left(\u_h^2,p_h^2,\phi_h^2\right)$, and $\left(\u_h^3,p_h^3,\phi_h^3\right)$, find $\left(\hat{\u}_h^{n+1},\hat{p}_h^{n+1},\hat{\phi}_h^{n+1}\right)\in \left(H_{fh},Q_h,H_{ph}\right)$, with $n=3,\ldots,N-1$,
\begin{align}\label{c3}
&\left(\frac{11\hat{\u}_h^{n+1}-18\u_h^{n}+9\u_h^{n-1}-2\u_h^{n-2}}{6k_{n+1}},\vv_h\right) +a_f\left(\hat{\u}_h^{n+1},\vv_h\right)+b\left(\vv_h,\hat{p}_h^{n+1}\right)\nonumber\\
=&\left(F_1^{n+1},\vv_h\right)-c_{\Gamma}\left(\vv_h,4\phi_{h}^{n}-6\phi_{h}^{n-1}+4\phi_h^{n-2}-\phi_h^{n-3}\right),\nonumber\\
&b(\hat{\u}_h^{n+1},q_h)=0;
\end{align}
\begin{align}
&gS\left(\frac{11\hat{\phi}_h^{n+1}-18\phi_h^{n}+9\phi_h^{n_1}-2\phi_h^{n-2}}{6k_{n+1}},\varphi_h\right) +a_p\left(\hat{\phi}_h^{n+1},\varphi_h\right)\nonumber\\
=&g\left(F_2^{n+1},\varphi_h\right)+c_{\Gamma}\left(4\u_{h}^{n}-6\u_{h}^{n-1}+4\u_h^{n-2}-\u_h^{n-3},\varphi_h\right).
\end{align}
\\
2. Apply time filter to update the previous solution
\begin{align}\label{c31}
\u_h^{n+1}&=\hat{\u}_h^{n+1}-\frac{3}{25}\left(\hat{\u}_h^{n+1}-4\u_h^{n}+6\u_h^{n-1}-4\u_h^{n-2}+\u_{h}^{n-3}\right),\nonumber\\
p_h^{n+1}&=\hat{p}_h^{n+1}-\frac{3}{25}\left(\hat{p}_h^{n+1}-4p_h^{n}+6p_h^{n-1}-4p_h^{n-2}+p_{h}^{n-3}\right),\\
\phi_h^{n+1}&=\hat{\phi}_h^{n+1}-\frac{3}{25}\left(\hat{\phi}_h^{n+1}-4\phi_h^{n}+6\phi_h^{n-1}-4\phi_h^{n-2}+\phi_{h}^{n-3}\right).\nonumber
\end{align}

In Figure \ref{fig14}, the velocity flow of BDF3-TF is accurately simulated through velocity flow graphs representing various true solutions.

\begin{table}[hbpt!]
\caption{The convergence performance and CPU time for constant time step BDF3-TF algorithm}
\label{tab:8}
\centering\begin{tabular}{cccccc}
\hline
$\qquad\Delta t\qquad$ & $\quad\rho_{\u}\quad$ &$\quad$ Rate$\quad$  & $\quad\rho_{\phi}\quad$ &$\quad$ Rate $\quad$&$\qquad$ CPU(s)$\qquad$\\
\noalign{\smallskip}\hline\noalign{\smallskip}
$\frac{1}{100}$ &   1.41929$\mathrm{e}-8$ & -  & 3.93736$\mathrm{e}-8$ & - & 1071.68 \\
$\frac{1}{200}$ & 8.73591$\mathrm{e}-9$ & 16.2466 & 2.42401$\mathrm{e}-9$ & 16.2432  & 2225.69 \\
$\frac{1}{400}$ & 5.44143$\mathrm{e}-10$ & 16.0544 &  1.49886$\mathrm{e}-10$ & 16.1723 & 4216.98 \\
$\frac{1}{800}$ & 3.33902$\mathrm{e}-11$ & 16.29659 & 9.32262$\mathrm{e}-11$ & 16.0777 & 8550.58 \\
\hline
\end{tabular}
\end{table}

For the constant time step BDF3-TF algorithms, we choose uniform triangular meshes with a fixed mesh size $h=\frac{1}{100}$. In this context, we calculate the convergence order with respect to the time step $\Delta t$, so we define
$$\rho_v=\frac{\left\|v_h^{\Delta t}-v_h^{\frac{\Delta t}{2}}\right\|_0}{\left\|v_h^{\frac{\Delta t}{2}}-v_h^{\frac{\Delta t}{4}}\right\|_0}$$
when $\rho\approx 16$ for $\kappa=4$, the corresponding order of convergence in time is of $O(\Delta t^4)$. The findings presented in Table \ref{tab:8} demonstrate the attainment of fourth-order convergence, and we intend to conduct further theoretical analysis in due course.
~\\
~\\
\textbf{Appendix B}
\label{sec6:appendix b}

\textbf{Proof of Lemma \ref{ab}.}
\begin{proof} By Taylor’s theorem with the integral remainder,
\begin{align*}
&\mathcal{A}(\u(t_{n+1}))-k_{n+1}\u_t(t_{n+1}) =a_3\u(t_{n+1})+a_2\u(t_n)+a_1\u(t_{n-1})+a_0\u(t_{n-2})-k_{n+1}\u_t(t_{n+1})\\
=&a_3\u(t_{n+1})-k_{n+1}\u_t(t_{n+1})+a_2\left(\u(t_{n+1})-k_{n+1}\u_t(t_{n+1}) +\frac{(k_{n+1}+k_{n})^2}{2}\u_{tt}(t_{n+1})\right.\\
&\left.-\frac{(k_{n+1}+k_n+k_{n-1})^2}{3}\u_{ttt}(t_{n+1}) +\frac{1}{2}\int_{t_{n+1}}^{t_n}\u_{tttt}(t)(t_n-t)^3dt\right)\\
&+ a_1\left(\u(t_{n+1})-2(k_{n+1}+k_n)\u_t(t_{n+1})+2(k_{n+1}+k_n)^2\u_{tt}(t_{n+1})\right.\\
&\left.-\frac{8(k_{n+1}+k_n)^3}{3}\u_{ttt}(t_{n+1}) +\frac{1}{2}\int_{t_{n+1}}^{t_n-1}\u_{tttt}(t)(t_{n-1}-t)^3dt\right)\\
&+a_0\left(\u(t_{n+1})-3(k_{n+1}+k_n+k_{n-1})\u_t(t_{n+1})+\frac{9(k_{n+1}+k_n+k_{n-1})^2}{2} \u_{tt}(t_{n+1})\right.\\
&\left.-9(k_{n+1}+k_n+k_{n-1})^3\u_{ttt}(t_{n+1}) +\frac{1}{2}\int_{t_{n+1}}^{t_{n-2}}\u_{tttt}(t)(t_{n-2}-t)^3dt\right)\\
\leq &C\left(\int^{t_{n+1}}_{t_n}\u_{tttt}(t)(t_n-t)^3dt +\int^{t_{n+1}}_{t_n}\u_{tttt}(t)(t_n-t)^3dt +\int^{t_{n+1}}_{t_{n-2}}\u_{tttt}(t)(t_{n-2}-t)^3dt\right).
\end{align*}
These terms are first estimated by Cauchy-Schwarz,
\begin{align*}
&\left(\int^{t_{n+1}}_{t_n}\u_{tttt}(t)(t_n-t)^3dt\right)^2 \leq\int^{t_{n+1}}_{t_n}\u_{tttt}^2dt\int^{t_{n+1}}_{t_n}(t_n-t)^6dt \leq C k_{n+1}^7\int^{t_{n+1}}_{t_n}\u_{tttt}^2dt,\\
&\left(\int^{t_{n+1}}_{t_{n-1}}\u_{tttt}(t)(t_{n-1}-t)^3dt\right)^2 \leq\int^{t_{n+1}}_{t_{n-1}}\u_{tttt}^2dt\int^{t_{n+1}}_{t_{n-1}}(t_{n-1}-t)^6dt \leq Ck_{n+1}^7\int^{t_{n+1}}_{t_{n-1}}\u_{tttt}^2dt,\\
&\left(\int^{t_{n+1}}_{t_{n-2}}\u_{tttt}(t)(t_{n-2}-t)^3dt\right)^2 \leq\int^{t_{n+1}}_{t_{n-2}}\u_{tttt}^2dt\int^{t_{n+1}}_{t_{n-2}}(t_{n-2}-t)^6dt \leq Ck_{n+1}^7\int^{t_{n+1}}_{t_{n-2}}\u_{tttt}^2dt.
\end{align*}
Thus,
\begin{align*}
&\left(\frac{\mathcal{A}(\u(t_{n+1}))}{k_{n+1}}-\u_t(t_{n+1})\right)^2\leq Ck_{n+1}^5\int^{t_{n+1}}_{t_{n-2}}\u_{tttt}^2dt.
\end{align*}

Using integrating by parts and the Cauchy-Schwarz inequality, we obtain
\begin{align*}
&\left\|\frac{\mathcal{A}(\tilde{\u}^{n+1})}{k_{n+1}}-\frac{\mathcal{A}(\u(t_{n+1}))}{k_{n+1}}\right\|_f^2 =\left\|(P_h^{\u}-I)\frac{a_3\u(t_{n+1})+a_2\u(t_n)+a_1\u(t_{n-1})+a_0\u(t_{n-2})}{k_{n+1}}-\right\|_f^2\\
\leq& \frac{C}{k_{n+1}^2}\int_{\Omega_f}\left(\int_{t_n}^{t_{n+1}}\beta_3(P_h^{\u}-I)\u_tdt -\int_{t_{n-1}}^{t_{n}}\beta_2(P_h^{\u}-I)\u_tdt +\int_{t_{n-2}}^{t_{n-1}}\beta_1(P_h^{\u}-I)\u_tdt\right)^2dx\\
\leq&\frac{C}{k_{n+1}^2}\int_{\Omega_f}\left(k_{n+1}\int_{t_n}^{t_{n+1}}\beta_3^2\left\|(P_h^{\u}-I)\u_t\right\|_f^2dt +k_{n}\int_{t_{n-1}}^{t_{n}}\beta_2^2\left\|(P_h^{\u}-I)\u_t\right\|_f^2dt\right.\\ &\left.+k_{n-1}\int_{t_{n-2}}^{t_{n-1}}\beta_1^2\left\|(P_h^{\u}-I)\u_t\right\|_f^2dt\right)dx\\
\leq&\frac{C}{k_{n+1}}\left(\int_{t_n}^{t_{n+1}}\left\|(P_h^{\u}-I)\u_t\right\|_f^2dt +\int_{t_{n-1}}^{t_{n}}\left\|(P_h^{\u}-I)\u_t\right\|_f^2dt +\int_{t_{n-2}}^{t_{n-1}}\left\|(P_h^{\u}-I)\u_t\right\|_f^2dt\right)\\
\leq&\frac{C}{k_{n+1}}\int_{t_{n-2}}^{t_{n+1}}\left\|(P_h^{\u}-I)\u_t\right\|_f^2dt.
\end{align*}

The last inequality is proved similarly
\begin{align*}
&\left\|\mathcal{B}(\tilde{\u}^{n+1})-\tilde{\u}_{\sigma}^{n}\right\|_{H_f}^2
\leq C\left\|\mathcal{B}(\u^{n+1})-\u_{\sigma}^{n}\right\|_{H_f}^2
\leq C\int_{\Omega_f}\left|C\int_{t_{n-1}}^{t_{n+1}}\nabla\u_{tt}dt +C\int_{t_{n-2}}^{t_{n}}\nabla\u_{tt}dt\right|^2dx\\
\leq&C\int_{\Omega_f}\left|Ck_{n+1}\int_{t_{n-2}}^{t_{n+1}}\nabla\u_{ttt}dt +C\int_{t_{n-2}}^{t_{n}}(t_n-t)^2\nabla\u_{ttt}dt +C\int_{t_{n-1}}^{t_{n+1}}(t_{n-1}-t)^2\nabla\u_{ttt}dt\right|^2dx\\
\leq&Ck_{n+1}^5\int_{t_{n-2}}^{t_{n+1}}\left\|\u_{ttt}\right\|_{H_f}^2.
\end{align*}
Thus, we can derive the inequalities stated in Lemma \ref{ab}.
\end{proof}

\end{document}